\begin{document}

\newtheorem{tm}{Theorem}[section]
\newtheorem{prop}[tm]{Proposition}
\newtheorem{defin}[tm]{Definition} % definition numbers are dependent on theorem numbers
\newtheorem{coro}[tm]{Corollary}
\newtheorem{lem}[tm]{Lemma}
\newtheorem{assumption}[tm]{Assumption}
\newtheorem{rk}[tm]{Remark}
\newtheorem{nota}[tm]{Notation}
\numberwithin{equation}{section}

\newcommand{\stk}[2]{\stackrel{#1}{#2}}
\newcommand{\dwn}[1]{{\scriptstyle #1}\downarrow}
\newcommand{\upa}[1]{{\scriptstyle #1}\uparrow}
\newcommand{\nea}[1]{{\scriptstyle #1}\nearrow}
\newcommand{\sea}[1]{\searrow {\scriptstyle #1}}
\newcommand{\csti}[3]{(#1+1) (#2)^{1/ (#1+1)} (#1)^{- #1
 / (#1+1)} (#3)^{ #1 / (#1 +1)}}
\newcommand{\RR}[1]{\mathbb{#1}}

\newcommand{\rd}{{\mathbb R^d}}
\newcommand{\ep}{\varepsilon}
\newcommand{\rr}{{\mathbb R}}
\newcommand{\alert}[1]{\fbox{#1}}
\newcommand{\eqd}{\sim}
\def\p{\partial}
\def\R{{\mathbb R}}
\def\N{{\mathbb N}}
\def\Q{{\mathbb Q}}
\def\C{{\mathbb C}}
\def\l{{\langle}}
\def\r{\rangle}
\def\t{\tau}
\def\k{\kappa}
\def\a{\alpha}
\def\la{\lambda}
\def\De{\Delta}
\def\de{\delta}
\def\ga{\gamma}
\def\Ga{\Gamma}
\def\ep{\varepsilon}
\def\eps{\varepsilon}
\def\si{\sigma}
\def\Re {{\rm Re}\,}
\def\Im {{\rm Im}\,}
\def\E{{\mathbb E}}
\def\P{{\mathbb P}}
\def\Z{{\mathbb Z}}
\def\D{{\mathbb D}}
\newcommand{\ceil}[1]{\lceil{#1}\rceil}

%\allowdisplaybreaks
\title{Traveling waves of a full parabolic attraction-repulsion chemotaxis systems with logistic sources}

\author{
Rachidi B. Salako   \\
Department of Mathematics\\
The Ohio State University\\
Columbus, OH 43210,
U.S.A. }

\date{}
\maketitle
%\begin{document}

\begin{abstract}
In this paper, we study traveling wave solutions of the chemotaxis systems
\begin{equation}\label{1}
\begin{cases}
u_{t}=\Delta u -\chi_1\nabla( u\nabla v_1)+\chi_2 \nabla(u\nabla v_2 )+ u(a -b u), \qquad \  x\in\mathbb{R} \\
\tau\partial_tv_1=(\Delta- \lambda_1 I)v_1+ \mu_1 u,  \qquad \ x\in\mathbb{R},  \\
\tau\partial v_2=(\Delta- \lambda_2 I)v_2+ \mu_2 u,  \qquad \ \ x\in\mathbb{R},
\end{cases}
\end{equation}
where $\tau>0,\chi_{i}> 0,\lambda_i> 0,\ \mu_i>0$ ($i=1,2$) and  $\ a>0,\  b> 0$ are  constants, and $N$ is a positive integer. Under some appropriate conditions on the parameters, we show that there exist two positive constant $ 0<c^{*}(\tau,\chi_1,\mu_1,\lambda_1,\chi_2,\mu_2,\lambda_2)<c^{**}(\tau,\chi_1,\mu_1,\lambda_1,\chi_2,\mu_2,\lambda_2)$ such that  for every $c^{*}(\tau,\chi_1,\mu_1,\lambda_1,\chi_2,\mu_2,\lambda_2)\leq c<c^{**}(\tau,\chi_1,\mu_1,\lambda_1,\chi_2,\mu_2,\lambda_2)$, \eqref{1} has a traveling wave solution $(u,v_1,v_2)(x,t)=(U,V_1,V_2)(x-ct)$  connecting $(\frac{a}{b},\frac{a\mu_1}{b\lambda_1},\frac{a\mu_2}{b\lambda_2})$ and $(0,0,0)$ satisfying
$$
\lim_{z\to \infty}\frac{U(z)}{e^{-\mu z}}=1,
$$
where  $\mu\in (0,\sqrt a)$ is such that $c=c_\mu:=\mu+\frac{a}{\mu}$. Moreover,
$$
\lim_{(\chi_1,\chi_2)\to (0^+,0^+))}c^{**}(\tau,\chi_1,\mu_1,\lambda_1,\chi_2,\mu_2,\lambda_2)=\infty$$
and
$$\lim_{(\chi_1,\chi_2)\to (0^+,0^+))}c^{*}(\tau,\chi_1,\mu_1,\lambda_1,\chi_2,\mu_2,\lambda_2)= c_{\tilde{\mu}^*},
$$
 where  $\tilde{\mu}^*={\min\{\sqrt{a}, \sqrt{\frac{\lambda_1+\tau a}{(1-\tau)_{+}}},\sqrt{\frac{\lambda_2+\tau a}{(1-\tau)_{+}}}\}}$.  We also show  that  \eqref{1} has no traveling wave solution connecting $(\frac{a}{b},\frac{a\mu_1}{b\lambda_1},\frac{a\mu_2}{b\lambda_2})$ and $(0,0,0)$ with speed $c<2\sqrt{a}$. 
\end{abstract}

\medskip
\noindent{\bf Key words.} Parabolic-parabolic-parabolic chemotaxis system, logistic source, classical solution, local existence, global existence, asymptotic stability, traveling wave solutions.

\medskip
\noindent {\bf 2010 Mathematics Subject Classification.} 35B35, 35B40, 35K57, 35Q92, 92C17.

\section{Introduction and the Statement of the Main Results}
Chemotaxis describes the oriented movement of biological cells or organisms in response to chemical gradients. The oriented movement of cells has a crucial role in a wide range of biological phenomena. At the beginning of 1970s, Keller and Segel  (see \cite{KeSe1}, \cite{KeSe2}) introduced systems of
partial differential equations of the following form  to model the time evolution of both the density $u(x,t)$ of a mobile species and the density $v(x,t)$ of a  chemoattractant,
\begin{equation}\label{IntroEq0}
\begin{cases}
u_{t}=\nabla\cdot (m(u)\nabla u- \chi(u,v)\nabla v) + f(u,v),\quad   x\in\Omega \\
\tau v_t=\Delta v + g(u,v),\quad  x\in\Omega
\end{cases}
\end{equation}
complemented with certain boundary condition on $\partial\Omega$ if $\Omega$ is bounded, where $\Omega\subset \R^N$ is an open domain;  $\tau\ge 0$ is a non-negative constant linked to the speed of diffusion of the chemical;  the function $\chi(u,v)$ represents  the sensitivity with respect to chemotaxis; and the functions $f$ and $g$ model the growth of the mobile species and the chemoattractant, respectively.
In literature, \eqref{IntroEq0} is called the Keller-Segel model or a chemotaxis model.

Since the works by Keller and Segel,  a rich variety of mathematical models for studying chemotaxis has appeared (see \cite{BBTW, DiNa, DiNaRa, GaSaTe, HiPa1, HiPo, KKAS, NAGAI_SENBA_YOSHIDA, Sug, SuKu, TeWi,  win_jde, win_JMAA_veryweak, win_arxiv, Win, win_JNLS, YoYo, PCHu1, QZhanYLi, ZhMuHuTi}, and the references therein).
  The reader is referred to \cite{HiPa, Hor} for some detailed introduction into the mathematics of KS models. In the current paper,  we consider chemoattraction-repulsion process  on the whole space in which cells undergo random motion and chemotaxis towards attractant and away from repellent \cite{MLACLE, PCHu1, QZhanYLi}. Moreover, we consider the model with proliferation and death of cells. These lead to the model of partial differential equations as follows:
\begin{equation}\label{Main-eq01}
\begin{cases}
\partial_t u=\Delta u -\chi_1\nabla( u\nabla v_1)+\chi_2 \nabla(u\nabla v_2 )+ u(a -b u), \qquad \  x\in \R^N  \\
\tau\partial_t v_1=(\Delta- \lambda_1 I)v_1+ \mu_1 u,  \qquad \ x\in \R^N  \\
\tau \partial_t v_2=(\Delta- \lambda_2 I)v_2+ \mu_2 u,  \qquad \text{in}\ \ x\in \R^N.
\end{cases}
\end{equation}

The objective of the current paper is to study the existence of traveling wave solutions  of \eqref{Main-eq01} 
connecting $(\frac{a}{b},\frac{a\mu_1}{b\lambda_1},\frac{a\mu_2}{b\lambda_2})$ and $(0,0,0)$.
A nonnegative solution $(u(x,t),v_1(x,t),v_2(x,t))$ of \eqref{Main-eq01} defined for every $(x,t)\in \R^{N+1}$ is called a {\it traveling wave solution} connecting $(\frac{a}{b},\frac{a\mu_1}{b\lambda_1},\frac{a\mu_2}{b\lambda_2})$ and $(0,0,0)$ and propagating in the direction $\xi\in S^{N-1}$ with speed $c$ if it is of the form
$(u(x,t),v_1(x,t),v_2(x,t))=(U(x\cdot\xi-ct),V_1(x\cdot\xi-ct),V_2(x\cdot\xi-ct))$ with
$\lim_{z\to -\infty}(U(z),V_1(z),V_2(z))=(\frac{a}{b},\frac{a\mu_1}{b\lambda_1},\frac{a\mu_2}{b\lambda_2})$ and $\lim_{z\to\infty}(U(z),V_1(z),V_2)=(0,0,0)$.

Observe that, if $(u(x,t),v_1(x,t),v_2(x,t))=(U(x\cdot\xi-ct),V_1(x\cdot\xi-ct),V_2(x\cdot\xi-ct))$  $(x\in\R^N,  t\in\R)$ is a traveling wave solution of \eqref{Main-eq01} connecting  $(\frac{a}{b},\frac{a\mu_1}{b\lambda_1},\frac{a\mu_2}{b\lambda_2})$ and $(0,0,0)$ and propagating
in the direction $\xi\in S^{N-1}$, then $(u,v_1,v_2)=(U(x-ct),V_1(x-ct),V_2(x-ct))$ ($x\in\R$)
is a traveling wave solution of
\begin{equation}
\label{Main-eq1}
\begin{cases}
\partial_tu=\partial_{xx}u +\partial_x(u \partial_x(\chi_2v_2-\chi_1v_1)) + u(a-bu),\quad x\in\R,\cr
\tau\partial_tv_1=\partial_{xx}v_{1}-\lambda_1v_1+ \mu_1u, \quad x\in\R,\cr
\tau\partial_tv_2=\partial_{xx}v_{2}-\lambda_2v_2+ \mu_2u, \quad x\in\R,
\end{cases}
\end{equation}
connecting  $(\frac{a}{b},\frac{a\mu_1}{b\lambda_1},\frac{a\mu_2}{b\lambda_2})$ and $(0,0,0)$. Conversely, if $(u(x,t),v_1(x,t),v_2(x,t))=(U(x-ct),V_1(x-ct),V_2(x,t))$ ($x\in\R,  t\in\R$) is a traveling wave solution
of \eqref{Main-eq1} connecting $(\frac{a}{b},\frac{a\mu_1}{b\lambda_1},\frac{a\mu_2}{b\lambda_2})$ and $(0,0,0)$, then $(u,v_1,v_2)=(U(x\cdot\xi-ct),V_1(x\cdot\xi-ct),V_{2}(x\cdot\xi-ct))$  $(x\in\R^N)$ is a traveling wave solution of \eqref{Main-eq01} connecting  $(\frac{a}{b},\frac{a\mu_1}{b\lambda_1},\frac{a\mu_2}{b\lambda_2})$ and $(0,0,0)$ and propagating in the direction $\xi\in S^{N-1}$. In the following, we will then study the existence of traveling wave solutions
of \eqref{Main-eq1} connecting $(\frac{a}{b},\frac{a\mu_1}{b\lambda_1},\frac{a\mu_2}{b\lambda_2})$ and $(0,0,0)$.

Observe also that $(u,v_1,v_2)=(U(x-ct),V_1(x-ct),V_2(x-ct))$  is a traveling wave solution of \eqref{Main-eq1} connecting $(\frac{a}{b},\frac{a\mu_1}{b\lambda_1},\frac{a\mu_2}{b\lambda_2})$ and $(0,0,0)$ with speed $c$ if and only if $(u,v_1,v_2)=(U(x),V_1(x),V_2(x))$ is a stationary solution of
the following parabolic-elliptic-elliptic chemotaxis system,
\begin{equation}
\label{Main-eq2}
\begin{cases}
\partial_tu=\partial_{xx}u +c\partial_{x}u +\partial_x(u \partial_x(\chi_2v_2-\chi_1v_1)) + u(a-bu),\quad x\in\R,\cr
0=\partial_{xx}v_{1}+c\tau\partial_xv_1-\lambda_1v_1+\mu_1u, \quad x\in\R,\cr
0=\partial_{xx}v_{2}+c\tau\partial_xv_2-\lambda_2v_2+\mu_2u, \quad x\in\R,
\end{cases}
\end{equation}
 connecting  $(\frac{a}{b},\frac{a\mu_1}{b\lambda_1},\frac{a\mu_2}{b\lambda_2})$ and $(0,0,0)$.
In this paper, to study the existence of traveling
wave solutions of \eqref{Main-eq1}, we study the existence of constant $c$'s so that \eqref{Main-eq2} has a stationary solution $(U(x),V_1(x),V_2(x))$ satisfying
$(U(-\infty),V_1(-\infty),V_2(-\infty))=(\frac{a}{b},\frac{a\mu_1}{b\lambda_1},\frac{a\mu_2}{b\lambda_2})$ and $(U(\infty),V_1(\infty)$, $V_2(\infty))=(0,0,0)$.

To this end, we first establish some results on the global existence of classical solutions of \eqref{Main-eq2} and the stability of constant solution $(\frac{a}{b},\frac{a\mu_1}{b\lambda_1},\frac{a\mu_2}{b\lambda_2})$, which are of independent interest. Note that, for fixed $c$, it can be proved by the similar arguments as those in \cite{SaSh1} that for any $u_0\in C_{\rm unif}^b(\R)$ with $u_0\ge 0$, there is
$T_{\max}(u_0)\in (0,\infty]$ such that \eqref{Main-eq2} has a unique classical solution $(u(x,t;u_0),v_1(x,t;u_0),v_2(x,t;u_0))$ on $[0,T_{\max}(u_0))$ with
$u(x,0;u_0)=u_0(x)$. Furthermore, if $T_{\max}(u_0)<\infty$, then $\lim_{t\to T_{\max}^{-}(u_0)}\|u(\cdot,t;u_0)\|_{\infty}=\infty$.

In \cite{SaSh5} together with Wenxian Shen, we studied the existence of traveling wave solutions of \eqref{Main-eq1} when $\tau =0$. When $\tau>0$, the dynamics of \eqref{Main-eq1} is more complex and most of the  techniques developed in \cite{SaSh5}, for $\tau =0$, can not be adopted directly. So,  nontrivial modification and new  techniques are needed to handle the full parabolic system \eqref{Main-eq1}. Also, the results established in \cite{SaSh5}  make use of the stability of the positive constant equilibria proved in \cite{SaSh4}. To our best knowledge, the stability of positive  constant equilibria of \eqref{Main-eq1} still remains an open problem. In the current paper we established some new results  for $\tau=0$, mainly the existence of the so call "critical wave", see Theorem C (ii) below.

For clarity of the statements of our main results on the existence of global classical solutions  and stability of the steady solution $(\frac{a}{b},\frac{a\mu_1}{b\lambda_1},\frac{a\mu_2}{b\lambda_2})$ of \eqref{Main-eq2}, it would be convenience to introduce some definitions.  For every real number r, we let $(r)_+ = \max\{0,r\}$ and $(r)_{-} = \max\{0,-r\}$. Let
\begin{equation}\label{M-plus-equation}
\overline{M}:=\int_{0}^{\infty}\Big(\chi_2\lambda_2\mu_2 e^{-\lambda_2 s}-\chi_1\lambda_1\mu_1 e^{-\lambda_1 s}\Big)_{+}ds,
\end{equation}
\begin{equation}\label{M-minus-equation}
\underline{M}:=\int_{0}^{\infty}\Big(\chi_2\lambda_2\mu_2 e^{-\lambda_2 s}-\chi_1\lambda_1\mu_1 e^{-\lambda_1 s}\Big)_{-}ds,
\end{equation}
and
\begin{equation}\label{K-equation}
K:= \int_0^\infty\left|\frac{\mu_1\chi_1e^{-\lambda_1 s}}{2\sqrt{\pi s}}-\frac{\chi_2\mu_2e^{-\lambda_2 s}}{2\sqrt{\pi s}}\right|ds.
\end{equation}
Observe that
\begin{equation}\label{e10}
\overline{M}-\underline{M}=\int_{0}^{\infty}\Big(\chi_2\lambda_2\mu_2 e^{-\lambda_2 s}-\chi_1\lambda_1\mu_1 e^{-\lambda_1 s}\Big)ds=\chi_{2}\mu_2-\chi_1\mu_1.
\end{equation}

\medskip

Our main result on the existence of global solution of \eqref{Main-eq2} reads as follow.

\medskip

\noindent{\bf Theorem A. } {\it Suppose that $c\geq 0$ and  $b>\underline{M}+c\tau K$ where $\underline{M}$ and $K$ are given by \eqref{M-minus-equation} and \eqref{K-equation} respectively. Then for every nonnegative initial function $u_0\in C^{b}_{\rm unif}(\R)$, \eqref{Main-eq2} has a unique global classical solution $(u(x,t;u_0),v_1(x,t;u_0),v_2(x,t;u_0))$ satisfying $u(x,0;u_0)=u_0(x)$. Furthermore, it holds that
\begin{equation}\label{global-exixst-thm-eq1}
\|u(\cdot,t;u_0)\|_{\infty}\leq \max\left\{ \|u_0\|_{\infty}, \frac{a}{b-\underline{M}-c\tau K}\right\}, \quad \forall t\geq 0
\end{equation}
and
\begin{equation}\label{global-exixst-thm-eq2}
\|v_{i}(\cdot,t;u_0)\|_{\infty}\leq \max\left\{ \frac{\mu_i\|u_0\|_{\infty}}{\lambda_i}, \frac{a\mu_{i}}{(b-\underline{M}-c\tau K)\lambda_i}\right\}, \quad \forall t\geq 0, \ i=1,2.
\end{equation}
}

\medskip

\begin{rk} Note that  $\underline{M}\leq \chi_1\mu_1$ and equality holds if and only if $\chi_2=0$. Next,  we look at the case case $c=0$ in \eqref{Main-eq2}.  In this case, we have

1)  If $b\geq \chi_1\mu_1$, \eqref{Main-eq2} with $c=0$ and $\chi_2>0$ always has a unique bounded and nonnegative  global classical solution for every given $u_0\in C^{b}_{\rm uinf}(\R)$, $u_{0}\geq 0$.

2) If $\lambda_2>\lambda_1$ and $\chi_2\lambda_2\mu_2\leq \chi_1\lambda_1\mu_1$ then $\underline{M}=\chi_1\mu_1-\chi_2\mu_2$.

3) If  $\lambda_2>\lambda_1$ and $\chi_2\lambda_2\mu_2\geq \chi_1\lambda_1\mu_1$ then $ \underline{M}=\left(\frac{\lambda_2}{\lambda_1}-1\right)\chi_2\mu_2\left(\frac{\chi_1\lambda_1\mu_1}{\chi_2\lambda_2\mu_2}
\right)^{\frac{\lambda_2}{\lambda_2-\lambda_1}}.$

4) If  $\lambda_2<\lambda_1$ and $\chi_2\lambda_2\mu_2\leq \chi_1\lambda_1\mu_1$ then $\underline{M}= \chi_1\mu_1-\chi_2\mu_2+\left(\frac{\lambda_1}{\lambda_2}-1 \right)\chi_1\mu_1\left(\frac{\chi_2\lambda_2\mu_2}{\chi_1\lambda_1\mu_1}\right)^{\frac{\lambda_1}{\lambda_1-\lambda_2}} $. 

5) If  $\lambda_2<\lambda_1$ and $\chi_2\lambda_2\mu_2\geq  \chi_1\lambda_1\mu_1$ then $\underline{M}= 0.$

6) If $\lambda_1=\lambda_2$ then $\underline{M}=(\chi_2\mu_2-\chi_1\mu_1)_{-}$.

Therefore, in the case $c=0$, Theorem A improves Theorem A in \cite{SaSh4}.
\end{rk}

\medskip

Next, we state our result on the stability of the positive constant equilibrium solution of \eqref{Main-eq2}.

\medskip
\noindent{\bf Theorem B.} {\it Suppose that $c\geq 0$ and  $b>2(\underline{M}+c\tau K)$ where $\overline{M}$ and $K$ are given by \eqref{M-minus-equation} and \eqref{K-equation} respectively. Then for every initial function $u_0\in C^{b}_{\rm unif}(\R)$, with $\inf_{x\in\R}u_0(x)>0$,  the unique global classical solution $(u(x,t;u_0),v_1(x,t;u_0),v_2(x,t;u_0))$ satisfying $u(x,0;u_0)=u_0(x)$ of \eqref{Main-eq2} satisfies
\begin{equation}\label{asymp-thm-eq1}
\lim_{t\to\infty}\|u(\cdot,t;u_0)-\frac{a}{b}\|_{\infty}= 0
\end{equation}
and
\begin{equation}\label{global-exixst-thm-eq2}
\lim_{t\to\infty}\|v_{i}(\cdot,t;u_0)-\frac{a\mu_i}{b\lambda_i}\|_{\infty}= 0, \ i=1,2.
\end{equation}
}

 To  state  our main results on the existence of traveling wave solutions of \eqref{Main-eq1}  as well for our results in the subsequent sections, we introduce a few more notations. Let, for $0<\mu\leq \sqrt{a}$ and $\tau>0$,  $c_{\mu}=\mu+\frac{a}{\mu}$,
\begin{equation}
\overline{M}(s):=(\chi_2\mu_2\lambda_2e^{-\lambda_2 s}-\chi_1\mu_1\lambda_1e^{-\lambda_1 s})_{+}, \quad \forall\ s>0,
\end{equation}
\begin{equation}
\underline{M}(s):=(\chi_2\mu_2\lambda_2e^{-\lambda_2 s}-\chi_1\mu_1\lambda_1e^{-\lambda_1 s})_{-}, \quad \forall\ s>0,
\end{equation}
\begin{equation}\label{M-tau-plus-equation}
\overline{M}_{\tau,\mu}(s):=\overline{M}(s)e^{-(\tau\mu c_{\mu}-\mu^2)s}, \quad \forall\ \mu,s>0, \quad  \quad \overline{M}_{\tau,\mu}:=\int_0^{\infty}\overline{M}_{\tau,\mu}(s)ds,
\end{equation}
\begin{equation}\label{M-tau-minus-equation}
\underline{M}_{\tau,\mu}(s):=\underline{M}(s)e^{-(\tau\mu c_{\mu}-\mu^2)s}, \quad \forall\ \mu,s>0, \quad  \quad \underline{M}_{\tau,\mu}:=\int_0^{\infty}\underline{M}_{\tau,\mu}(s)ds,
\end{equation}
and
\begin{equation}\label{K-tau-equation}
K_{\tau,\mu}=\int_{0}^{\infty}\left|\chi_1\mu_1e^{-\lambda_1 s}-\chi_2\mu_2e^{-\lambda_2 s}\right|\frac{(1+\mu\sqrt{\pi s})e^{-(\tau\mu c_{\mu}-\mu^2)s}}{\sqrt{\pi s}}ds, \quad \forall \mu>0.
\end{equation}
Observe that  $\overline{M}=\int_{0}^{\infty}\overline{M}(s)ds$  and $\underline{M}=\int_{0}^{\infty}\underline{M}(s)ds$ where $\overline{M}$ and $\underline{M}$ are given by \eqref{M-plus-equation} and \eqref{M-minus-equation} respectively.

\medskip

Next, we state our results on the existence of traveling wave solutions of \eqref{Main-eq1}.  Let us consider the auxiliary function $f\ :\ (0, \min\{\sqrt{a}, \sqrt{\frac{\lambda_1+\tau a}{(1-\tau)_{+}}},\sqrt{\frac{\lambda_2+\tau a}{(1-\tau)_{+}}}\} ) \to\R$ defined by
$$ 
f(\mu)= \max\{2(\underline{M}+\tau c_{\mu}K)\ ,\ \chi_1\mu_1-\chi_2\mu_2+ (\tau c_{\mu}+\mu)K_{\tau,\mu}+\overline{M}_{\tau,\mu} \},
$$
where $c_{\mu}=\mu+\frac{a}{\mu}$, $\underline{M}$, $K$, $\overline{M}_{\tau,\mu}$, and $K_{\tau,\mu}$ are given by \eqref{M-minus-equation}, \eqref{K-equation}, \eqref{M-tau-plus-equation} and \eqref{K-tau-equation} respectively. Clearly, the function $f$ is continuous. We suppose that the following standing assumption holds.

\medskip

{\bf (H)} $b>\inf_{\mu}f(\mu)$. 
 
 \medskip
 
 Assuming that {\bf (H)} holds, we let  $(\mu^{**}_{\tau}, \mu^{*}_{\tau})$ denotes the right maximal open connected component of the open set $O=\{\mu\in(0, \min\{\sqrt{a}, \sqrt{\frac{\lambda_1+\tau a}{(1-\tau)_{+}}},\sqrt{\frac{\lambda_2+\tau a}{(1-\tau)_{+}}}\} )\ : \ b>f(\mu) \}$ and set 
 \begin{equation}\label{c-low-def} 
 c^{*}(\tau,\chi_1,\mu_1,\lambda_1,\chi_2,\mu_2,\lambda_2)=c_{\mu^*}
 \end{equation} 
 and
 \begin{equation}\label{c-up-def}
 c^{**}(\tau,\chi_1,\mu_1,\lambda_1,\chi_2,\mu_2,\lambda_2)=\lim_{\mu\to\mu^{**}+}c_{\mu}.
 \end{equation}
 
\medskip

\noindent{\bf Theorem C. } {\it Assume {\bf (H)}. Let $ c^{*}(\tau,\chi_1,\mu_1,\lambda_1,\chi_2,\mu_2,\lambda_2)$ and $c^{**}(\tau,\chi_1,\mu_1,\lambda_1,\chi_2,\mu_2,\lambda_2)$ be given \eqref{c-low-def} and \eqref{c-up-def} respectively. The following hold.
\begin{description}
\item[(i)]  For every $c^{*}(\tau,\chi_1,\mu_1,\lambda_1,\chi_2,\mu_2,\lambda_2)<c<c^{**}(\tau,\chi_1,\mu_1,\lambda_1,\chi_2,\mu_2,\lambda_2)$, \eqref{Main-eq1} has a traveling wave solution $(u,v_1,v_2)(x,t)=(U,V_1,V_2)(x-ct)$  connecting $(\frac{a}{b},\frac{a\mu_1}{b\lambda_1},\frac{a\mu_2}{b\lambda_2})$ and $(0,0,0)$ satisfying
\begin{equation}
\lim_{z\to \infty}\frac{U(z)}{e^{-\mu z}}=1,
\end{equation}
where  $\mu\in (0,\sqrt a)$ is such that $c=c_\mu:=\mu+\frac{a}{\mu}=c$.  Moreover,
$$
\lim_{(\chi_1,\chi_2)\to (0^+,0^+))}c^{**}(\tau,\chi_1,\mu_1,\lambda_1,\chi_2,\mu_2,\lambda_2)=\infty$$
and
$$\lim_{(\chi_1,\chi_2)\to (0^+,0^+))}c^{*}(\tau,\chi_1,\mu_1,\lambda_1,\chi_2,\mu_2,\lambda_2)=c_{\tilde{\mu}^*},
$$
where $\tilde{\mu}^*={\min\{\sqrt{a}, \sqrt{\frac{\lambda_1+\tau a}{(1-\tau)_{+}}},\sqrt{\frac{\lambda_2+\tau a}{(1-\tau)_{+}}}\}}$
\item[(ii)] There is a  traveling wave solution  $(u,v_1,v_2)(x,t)=(U,V_1,V_2)(x-ct)$ of \eqref{Main-eq1}  connecting $(\frac{a}{b},\frac{a\mu_1}{b\lambda_1},\frac{a\mu_2}{b\lambda_2})$ and $(0,0,0)$ with speed $c^{*}(\tau,\chi_1,\mu_1,\lambda_1,\chi_2,\mu_2,\lambda_2) $.
\item[(iii)] There is no traveling wave solution  $(u,v_1,v_2)(x,t)=(U,V_1,V_2))(x-ct)$ of \eqref{Main-eq1}  connecting $(\frac{a}{b},\frac{a\mu_1}{b\lambda_1},\frac{a\mu_2}{b\lambda_2})$ and $(0,0,0)$ with speed $c<2\sqrt{a}$.
\end{description}
}

\medskip

Let us make few comments about Theorem C. We first remark that $\tau =0$ is allowed in Theorem C. However, our result Theorem C (i) in this case is hard to compare with the results in \cite{SaSh5}. {The result in Theorem C (ii) is new  in the case of $\tau=0$.} Note also that when $\tau\ge 1$ and $b$ is sufficiently large, we have that $c^{*}(\tau,\chi_1,\mu_1,\lambda_1,\chi_2,\mu_2,\lambda_2) =2\sqrt{a}$, which is the minimal wave of the classical Fisher-KPP equation.  Hence in this case, it follows from Theorem C (ii) $\&$ (iii) that the presence of the chemotaxis signal does not affect the minimal wave speed. It remains open whether $c^{*}(\tau,\chi_1,\mu_1,\lambda_1,\chi_2,\mu_2,\lambda_2)$ can be taken to be $2\sqrt{a}$ in general. It also remains open whether $c^{**}(\tau,\chi_1,\mu_1,\lambda_1,\chi_2,\mu_2,\lambda_2)$ can be taken to  be  infinity. These questions are related
  to whether \eqref{Main-eq1} has a minimum and/or maximal traveling wave speeds. Note that in the absence of the chemotaxis effect, that is $\chi_1=\chi_2=0$, the first equation in \eqref{Main-eq1} becomes the following  scalar reaction diffusion equation,
\begin{equation}
\label{fisher-eq}
u_{t}=\Delta u + u(a-bu),\quad  x\in\R^N,\,\, t>0,
\end{equation}
which is referred to as Fisher or KPP equations due to  the pioneering works by Fisher (\cite{Fis}) and Kolmogorov, Petrowsky, Piscunov
(\cite{KPP}) on the spreading properties of \eqref{fisher-eq}.
It follows from the works \cite{Fis}, \cite{KPP}, and \cite{Wei1}  that $c^*_{-}$ and $c^*_{+}$  in Theorem C and Theorem D,  respectively, can be chosen so that $c^{\ast}_{-}=c^{\ast}_{+}=2\sqrt a$
 ($c^*:=2\sqrt a$ is called the {\it spatial spreading speed} of \eqref{fisher-eq} in literature), and that \eqref{fisher-eq} has traveling wave solutions $u(t,x)=\phi(x-ct)$  connecting $\frac{a}{b}$ and $0$ (i.e.
$(\phi(-\infty)=\frac{a}{b},\phi(\infty)=0)$) for all speeds $c\geq c^*$ and has no such traveling wave
solutions of slower speed.
 Since the pioneering works by  Fisher \cite{Fis} and Kolmogorov, Petrowsky,
Piscunov \cite{KPP},  a huge amount research has been carried out toward the spreading properties of
  reaction diffusion equations of the form,
\begin{equation}
\label{general-fisher-eq}
u_t=\Delta u+u f(t,x,u),\quad x\in\R^N,
\end{equation}
where $f(t,x,u)<0$ for $u\gg 1$,  $\p_u f(t,x,u)<0$ for $u\ge 0$ (see \cite{Berestycki1, BeHaNa1, BeHaNa2, Henri1, Fre, FrGa, LiZh, LiZh1, Nad, NoRuXi, NoXi1, She1, She2, Wei1, Wei2, Zla}, etc.). The existence of minimal wave speeds becomes a natural question to study as it is related to whether the presence of the chemotaxis speeds up or  slows down the minimal wave speed.  A partial and satisfactory answer to this is obtained  by Theorem C when $\tau\geq 1$ and the self-limitation rate  $b$  of the mobile species is sufficiently large. We plan to study these questions in our future works.

\medskip

The rest of the paper is organized as follows. In section 2 we study the dynamics of classical solutions of \eqref{Main-eq2} and prove Theorems A $\&$ B. Section 3  is  to develop the machinery and set up the right frame work to study the existence of traveling solution. Finally, based on the results established in section 3, we complete the proof of Theorem C in section 4.

\section{Dynamics of the  induced parabolic-elliptic-elliptic chemotaxis system}

In this section, we study the global existence of classical solutions of \eqref{Main-eq2} with given nonnegative initial functions
and the stability of the constant solution $(\frac{a}{b},\frac{a\mu_1}{b\lambda_1},\frac{a\mu_2}{b\lambda_2})$ of \eqref{Main-eq2}, and prove Theorems A and B.

%Consider
%\begin{equation}
%\label{reduced-eq2}
%\begin{cases}
%u_{t}=u_{xx}+c u_x -\chi (u v_x)_x + u(a-bu),\quad x\in\R\\
%0=v_{xx}+\tau cv_{x}-v+u, \quad x\in\R.
%\end{cases}
%\end{equation}
%Note that stationary solutions of \eqref{reduced-eq} and \eqref{reduced-eq2} are the same.
%To study traveling wave solutions of \eqref{main-eq} with speed $c$ is then equivalent to study the stationary solutions
%of \eqref{reduced-eq2}.

For fixed $c$, it can be proved by the similar arguments as those in \cite{SaSh1} that for any $u_0\in C_{\rm unif}^b(\R)$ with $u_0\ge 0$, there is
$T_{\max}(u_0)\in (0,\infty]$ such that \eqref{Main-eq2} has a unique classical solution $(u(x,t;u_0),v_1(x,t;u_0),v_1(x,t;u_0))$ on $[0,T_{\max}(u_0))$ with $u(x,0;u_0)=u_0(x)$.

 Observe that \eqref{Main-eq2} is equivalent to
\begin{equation}
\label{stationary-eq}
\begin{cases}
u_{t}=u_{xx}+(c +(\chi_2 v_2-\chi_1v_1)_{x})u_{x} + u(a+(\chi_2
\lambda_2v_2-\chi_1\lambda_1v_1)+c\tau( \chi_1v_1- \chi_2v_2)_{x})\\
\quad \quad-(b+\chi_2\mu_2-\chi_1\mu_1)u^2,\quad x\in\R\\
0=\partial_{xx}v_{1}+c\tau\partial_xv_{1}-\lambda_1v_1+\mu_1u, \quad x\in\R\\
0=\partial_{xx}v_{2}+c\tau\partial_xv_{2}-\lambda_2v_2+\mu_2u, \quad x\in\R.
\end{cases}
\end{equation}

\begin{proof}[Proof of Theorem A]
For given $u_0\in C_{\rm unif}^b(\R)$ and $T>0$, choose $C_0>0$ such that $0\le u_0\le C_0$ and $C_0\ge \frac{a}{b+\chi_2\mu_2-\chi_1\mu_1-\overline{M}-c\tau K}$,  and  let
$$
\mathcal{E}(u_0,T):=\{u\in C_{\rm unif}^b(\R\times [0,T])\,|\, u(x,0)=u_0(x),\,\, 0\le u(x,t)\le  C_0\}.
$$
For given $u\in \mathcal{E}(u_0,T)$, let $v_{1}(x,t;u)$ and $v_{2}(x,t;u)$ be the solutions of the second and third equations in \eqref{stationary-eq}.
Then for every $u\in\mathcal{E}(u_0,T)$,  $x\in\R, t\geq 0$, we have that
\begin{equation}\label{v-eq0001}
v_{i}(x,t;u)=\mu_i\int_{0}^{\infty}\int_{\R}\frac{e^{-\lambda_i s}}{\sqrt{4\pi s}}e^{-\frac{|x-z|^2}{4s}}u(z+\tau cs,t)dzds,
\end{equation}
and
\begin{equation}\label{partial-of-v-in-x-eq}
\partial_{x}v_{i}(x,t;u)=\frac{\mu_i}{\sqrt{\pi}}\int_0^\infty\int_\R\frac{(z-x)e^{-\lambda_i s}}{2s\sqrt{4\pi s}}e^{-\frac{|x-z|^2}{4s}}u(z+\tau cs,t)dzds.
\end{equation}
Using the fact that
$
\int_{\R} e^{-z^2}dz=\sqrt{\pi}$ and $\int_{0}^{\infty}ze^{-z^2}dz=\frac{1}{2}$, it follows from \eqref{v-eq0001} and \eqref{partial-of-v-in-x-eq} that

\begin{align*}
(\chi_2\lambda_2v_{2}-\chi_1\lambda_1v_{1})(x,t;u)&=\int_{0}^{\infty}\int_{\R}\Big(\chi_2\lambda_2\mu_2\frac{e^{-\lambda_2 s}}{\sqrt{\pi}}-\chi_1\lambda_1\mu_1\frac{e^{-\lambda_1 s}}{\sqrt{\pi}}\Big)e^{-z^2}u(x+2\sqrt{s}z+\tau cs,t)dzds\\
&\leq \Big[\int_{0}^{\infty}\int_{\R}\Big(\chi_2\lambda_2\mu_2\frac{e^{-\lambda_2 s}}{\sqrt{\pi}}-\chi_1\lambda_1\mu_1\frac{e^{-\lambda_1 s}}{\sqrt{\pi}}\Big)_{+}e^{-z^2}dzds\Big]\|u(\cdot,t)\|_{\infty}\\
&= \underbrace{\Big[\int_{0}^{\infty}\Big(\chi_2\lambda_2\mu_2 e^{-\lambda_2 s}-\chi_1\lambda_1\mu_1 e^{-\lambda_1 s}\Big)_{+}ds\Big]}_{\overline{M}}\|u(\cdot,t)\|_{\infty}
\end{align*}
and
\begin{align*}
\partial_{x}(\chi_1v_{1}-\chi_2v_{2})(x,t;u_0)&=\int_0^\infty\int_\R\left(\mu_1\chi_1e^{-\lambda_1 s}-\chi_2\mu_2e^{-\lambda_2 s}\right)\frac{(z-x)e^{-\frac{|x-z|^2}{4s}}}{2s\sqrt{4\pi s}}u(z+\tau cs,t)dzds\nonumber\\
&=\int_0^\infty\int_\R\left(\mu_1\chi_1e^{-\lambda_1 s}-\chi_2\mu_2e^{-\lambda_2 s}\right)\frac{ze^{-z^2}}{\sqrt{\pi s}}u(x+2\sqrt{s}z+\tau cs,t)dzds\nonumber\\
&\leq \Big[\int_0^\infty\int_{0}^{\infty}\left(\mu_1\chi_1e^{-\lambda_1 s}-\chi_2\mu_2e^{-\lambda_2 s}\right)_{+}\frac{ze^{-z^2}}{\sqrt{\pi s}}dzds\Big]\|u(\cdot,t)\|_{\infty}\nonumber\\
& -\Big[\int_0^\infty\int_{-\infty}^{0}\left(\mu_1\chi_1e^{-\lambda_1 s}-\chi_2\mu_2e^{-\lambda_2 s}\right)_{-}\frac{ze^{-z^2}}{\sqrt{\pi s}}dzds\Big]\|u(\cdot,t)\|_{\infty}\nonumber\\
&= \underbrace{\Big[\int_0^\infty\left|\frac{\mu_1\chi_1e^{-\lambda_1 s}}{2\sqrt{\pi s}}-\frac{\chi_2\mu_2e^{-\lambda_2 s}}{2\sqrt{\pi s}}\right|ds\Big]}_{K}\|u(\cdot,t)\|_{\infty}.
\end{align*}
Thus
\begin{equation}\label{Estimate-v-partial-v-eq0001}
(\chi_2\lambda_2v_{2}-\chi_1\lambda_1v_{1})(x,t;u)\leq \overline{M}C_0 \quad\text{and}\quad  \partial_{x}(\chi_1v_{1}-\chi_2v_{2})(x,t;u_0)\leq KC_0\quad \forall\, t\in [0,T],\,\, x\in\R.
\end{equation}

For given $u\in\mathcal{E}(u_0,T)$, let $\tilde U(x,t;u)$ be the solution of the initial value problem
\begin{equation}
\label{aux-eq1}
\begin{cases}\tilde U_t(x,t)= \tilde U_{xx}+(c+(\chi_2v_2-\chi_1v_1)_x(x,t;u))\tilde U_x+(a+(\chi_2\lambda_2v_2-\chi_1\lambda_1v_1)(x,t;u))\tilde{U}\cr
\qquad\qquad\,\,  +\tilde{U}\big(c\tau(\chi_1v_1-\chi_2v_2)_{x}(x,t;u)-(b+\chi_2\mu_2-\chi_1\mu_1) \tilde{U}\big),\quad x\in\R\cr
\tilde U(x,0;u)=u_0(x), \quad x\in\R.
\end{cases}
\end{equation}

Since $u_0\ge 0$, comparison principle for parabolic equations implies that $\tilde U(x,t)\ge 0$ for every $x\in\R, \ t\in[0, T]$. It follows from \eqref{Estimate-v-partial-v-eq0001} that
$$
\tilde U_t(x,t)\leq \tilde U_{xx}+(c+(\chi_2v_2-\chi_1v_1)_x(x,t;u))\tilde U_x+\tilde{U}\big(a+ (\overline{M} +c\tau K)C_0 -(b+\chi_2\mu_2-\chi_1\mu_1)\tilde U\big).
$$
%Let $\tilde U(x,t;u)$ be the solution of \eqref{aux-eq1} with $\tilde U(x,0;u)=u_0(x)$.
Hence, comparison principle for parabolic equations implies that
\begin{equation}\label{auxx-eq1}
\tilde U(x,t,u)\leq \tilde u(t,\|u_0\|_\infty), \quad \forall \ x\in\R,\ t\in[0, T],
\end{equation}
 where $\tilde u$ is the solution of the ODE
$$
\begin{cases}
\tilde u_t= \tilde u \big(a+ (\overline{M}+c\tau K)C_0-(b+\chi_2\mu_2-\chi_1\mu_1)\tilde u)\quad t>0\cr
\tilde u(0)=\|u_0\|_\infty.
\end{cases}
$$
Since $b+\chi_2\mu_2-\chi_1\mu_1>0$, the function $\tilde  u(\cdot,\|u_0\|_\infty)$ is defined for all time and satisfies $0\leq \tilde u(t,\|u_0\|_\infty)\leq \max\{\|u_0\|_{\infty},\ \frac{a+ (\overline{M}+c\tau K)C_0}{b+\chi_2\mu_2-\chi_1\mu_1}\}$ for every $t\geq 0$. This combined with \eqref{auxx-eq1} yields that
\begin{equation}\label{auxx-eq2}
0\le \tilde U(x,t;u)\le \max\{\|u_0\|,  \frac{a+ (\overline{M}+c\tau K)C_0}{b+\chi_2\mu_2-\chi_1\mu_1}\}\leq C_0\quad \forall\,\, x\in\R,\,\,\forall\,\, t\in [0,T].
\end{equation}
Note that the second inequality in \eqref{auxx-eq2} follows from the fact $\frac{a+ (\overline{M}+c\tau K)C_0}{b+\chi_2\mu_2-\chi_1\mu_1}\leq C_0$ whenever $C_0\ge \frac{a}{b+\chi_2\mu_2-\chi_1\mu_1-\overline{M}-c\tau K}$.  It then follows that  $\tilde U(\cdot,\cdot;u)\in\mathcal{E}(u_0,T)$.

\smallskip

Following the proof of Lemma 4.3 in \cite{SaSh2}, we can prove that the mapping $\mathcal{E}(u_0,T)\ni u\mapsto \tilde U(\cdot,\cdot;u)\in\mathcal{E}(u_0,T)$ has a fixed point
$\tilde U(x,t;u)=u(x,t)$. Note that if $\tilde U(\cdot,\cdot,u)=u$, then $(u(\cdot,\cdot),v_{1}(\cdot,\cdot;u),v_{2}(\cdot,\cdot;u))$ is a solution of \eqref{Main-eq2}. Since $u(\cdot,\cdot;u_0)$ is the only solution of \eqref{Main-eq2}, thus $u(\cdot,\cdot,u_0)=u(\cdot,\cdot)$. Hence, it follows from \eqref{auxx-eq2} that for any $T>0$,
\begin{equation}\label{u-eq0001}
0\le u(x,t;u_0)\le \max\{\|u_0\|,  \frac{a+ (\overline{M}+c\tau K)C_0}{b+\chi_2\mu_2-\chi_1\mu_1}\}\quad \forall\,\, t\in [0,T].
\end{equation}
 This implies that $T_{\max}(u_0)=\infty$. Recall fron \eqref{e10} that $\chi_2\mu_2-\chi_1\mu_1-\overline{M}=-\underline{M}$. Thus,  inequalities \eqref{global-exixst-thm-eq1} and \eqref{global-exixst-thm-eq2} follow from \eqref{u-eq0001} with $C_{0}=\max\{\|u_0\|_{\infty},\frac{a}{b+\chi_2\mu_2-\chi_1\mu_1-\overline{M}-c\tau K}\}$.
\end{proof}

In the next result, we prove the stability of the positive constant solution $(\frac{a}{b},\frac{a\mu_1}{b\lambda_1},\frac{a\mu_2}{b\lambda_2})$ of \eqref{Main-eq2}.

\begin{proof}[Proof of Theorem B]
Let $u_{0}\in C^{b}_{\rm uinf}(\R)$ with $\inf_{x\in\R}u_0(x)>0$ be given. By Theorem A, we have that
 $ \sup_{x\in\R,\ t\geq 0}u(x,t;u_0)<\infty$. Hence
$$
m:=\sup_{x\in\R,\ t\ge 0}(|\chi_1\lambda_1v_1-\chi_2\lambda_2v_2|(x,t,u_0)+c\tau |(\chi_1v_1-\chi_2v_2)_{x}|(x,t;u_0)|)<\infty.
$$
Thus, we have that
$$
u_{t}\geq u_{xx}+(c+(\chi_2v_2-\chi_1 v_1)_{x})u_x +(a-m-(b+\chi_2\mu_2-\chi_1\mu_1)u), \quad \forall x\in\R,\ t>0.
$$
Therefore, comparison principle for parabolic equations implies that
$$
u(x,t;u_{0})\geq U^{l}(t),\quad \forall\ x\in\R, \forall\ t\geq 0,
$$
where $U^{l}(t)$ is the solution of the ODE
$$
\begin{cases}
U_{t}=U(a-m-(b+\chi_2\mu_2-\chi_1\mu_1)U)\cr
U(0)=\inf_{x}u_0(x).
\end{cases}
$$
Since $\inf_{x}u_0(x)>0$, we have that $U^{l}(t)>0$ for every $t\geq 0$. Thus
\begin{equation}\label{e00}
0<U^{l}(t)\leq \inf_{x\in\R}u(x,t;u_0),\quad \forall\,\,  t\geq 0.
\end{equation}

 Next, let us define
$$
\bar u=\limsup_{t\to\infty}\sup_{x\in\R} u(x,t;u_0),\quad \underline u=\liminf_{t\to\infty}\inf_{x\in\R} u(x,t;u_0).
$$
Then for any $\epsilon>0$, there is $T_\epsilon>0$ such that
\begin{equation}\label{e000}
\underline u-\epsilon<u(x,t;u_0)\le \bar u+\epsilon\quad \forall t\ge T_\epsilon,\quad x\in\R.
\end{equation}

It follows from \eqref{v-eq0001} and \eqref{e000} that for every $x\in\R$ and $t\geq T_{\varepsilon}$ we have that
\begin{align}\label{e0}
(\chi_2\lambda_2v_2-\chi_1\lambda_1v_1)(x,t;u_0)&=\int_{0}^{\infty}\int_{\R}\Big(\chi_2\lambda_2\mu_2\frac{e^{-\lambda_2 s}}{\sqrt{\pi}}-\chi_1\lambda_1\mu_1\frac{e^{-\lambda_1 s}}{\sqrt{\pi}}\Big)e^{-z^2}u(x+2\sqrt{s}z+\tau cs,t)dzds\nonumber\\
&\leq (\overline{u}+\varepsilon)\int_{0}^{\infty}\int_{\R}\Big(\chi_2\lambda_2\mu_2\frac{e^{-\lambda_2 s}}{\sqrt{\pi}}-\chi_1\lambda_1\mu_1\frac{e^{-\lambda_1 s}}{\sqrt{\pi}}\Big)_{+}e^{-z^2}dzds\nonumber\\
&-(\underline{u}-\varepsilon)\int_{0}^{\infty}\int_{\R}\Big(\chi_2\lambda_2\mu_2\frac{e^{-\lambda_2 s}}{\sqrt{\pi}}-\chi_1\lambda_1\mu_1\frac{e^{-\lambda_1 s}}{\sqrt{\pi}}\Big)_{-}e^{-z^2}dzds\nonumber\\
&= (\overline{u}+\varepsilon)\int_{0}^{\infty}\Big(\chi_2\lambda_2\mu_2e^{-\lambda_2 s}-\chi_1\lambda_1\mu_1e^{-\lambda_1 s}\Big)_{+}ds\nonumber\\
&-(\underline{u}-\varepsilon)\int_{0}^{\infty}\Big(\chi_2\lambda_2\mu_2e^{-\lambda_2 s}-\chi_1\lambda_1\mu_1e^{-\lambda_1 s}\Big)_{-}ds\nonumber\\
&= (\overline{u}+\varepsilon)\overline{M}-(\underline{u}-\varepsilon)\underline{M}.
\end{align}
Similar arguments leading to the last inequality yield that
\begin{equation}\label{e1}
(\chi_2\lambda_2v_2-\chi_1\lambda_1v_1)(x,t;u_0)\geq (\underline{u}-\varepsilon)\overline{M}-(\overline{u}+\varepsilon)\underline{M}.
\end{equation}
Similarly, using \eqref{partial-of-v-in-x-eq} and \eqref{e000}, we have for every $x\in\R$ and $t\geq T_{\varepsilon}$,
\begin{align}\label{e2}
\partial_{x}(\chi_1v_{1}-\chi_2v_{2})(x,t;u_0)&=\int_0^\infty\int_\R\left(\mu_1\chi_1e^{-\lambda_1 s}-\chi_2\mu_2e^{-\lambda_2 s}\right)\frac{ze^{-z^2}}{\sqrt{\pi s}}u(x+2\sqrt{s}z+\tau cs,t)dzds\nonumber\\
&\leq (\overline{u}+\varepsilon)\int_0^\infty\int_{0}^{\infty}\left(\mu_1\chi_1e^{-\lambda_1 s}-\chi_2\mu_2e^{-\lambda_2 s}\right)_{+}\frac{ze^{-z^2}}{\sqrt{\pi s}}dzds\nonumber\\
&- (\underline{u}-\varepsilon)\int_0^\infty\int_{0}^{\infty}\left(\mu_1\chi_1e^{-\lambda_1 s}-\chi_2\mu_2e^{-\lambda_2 s}\right)_{-}\frac{ze^{-z^2}}{\sqrt{\pi s}}dzds\nonumber\\
&+(\underline{u}-\varepsilon)\int_0^\infty\int_{-\infty}^{0}\left(\mu_1\chi_1e^{-\lambda_1 s}-\chi_2\mu_2e^{-\lambda_2 s}\right)_{+}\frac{ze^{-z^2}}{\sqrt{\pi s}}dzds\nonumber\\
&- (\overline{u}+\varepsilon)\int_0^\infty\int_{-\infty}^{0}\left(\mu_1\chi_1e^{-\lambda_1 s}-\chi_2\mu_2e^{-\lambda_2 s}\right)_{-}\frac{ze^{-z^2}}{\sqrt{\pi s}}dzds\nonumber\\
&=(\overline{u}+\varepsilon)\int_{0}^{\infty}\left|\frac{\mu_1\chi_1e^{-\lambda_1s}}{2\sqrt{s}} -\frac{\mu_2\chi_2e^{-\lambda_2s}}{2\sqrt{s}}\right|ds\nonumber\\
& -(\underline{u}-\varepsilon)\int_{0}^{\infty}\left|\frac{\mu_1\chi_1e^{-\lambda_1s}}{2\sqrt{s}} -\frac{\mu_2\chi_2e^{-\lambda_2s}}{2\sqrt{s}}\right|ds\nonumber\\
&=(\overline{u}-\underline{u}+2\varepsilon)K,
\end{align}
 and
 \begin{equation}\label{e3}
 \partial_{x}(\chi_1v_{1}-\chi_2v_{2})(x,t;u_0)\geq (\underline{u}-\varepsilon)K -(\overline{u}+\varepsilon)K=(\underline{u}-\overline{u}-2\varepsilon)K.
 \end{equation}

Thus, using inequalities \eqref{e0} and \eqref{e2}, for every $t\geq T_{\varepsilon}$, we have that
$$
u_t\leq u_{xx}+(c+(\chi_2v_2-\chi_1v_{1})_{x})u_x+(a+(\overline{u}+\varepsilon)(c\tau K+\overline{M})-(\underline{u}-\varepsilon)(c\tau K+\underline{M})-(b+\chi_2\mu_2-\chi_1\mu_1)u ).
$$
Thus, comparison principle for parabolic equations implies that
\begin{equation}\label{e5}
u(x,t;u_0)\leq \overline{U}^{\varepsilon}(t),\quad \forall\, x\in\R,\ \forall\, t\geq T_{\varepsilon},
\end{equation}
where $\overline{U}^{\varepsilon}(t)$ is the solution of the ODE
$$
\begin{cases}
\overline{U}_{t}=\overline{U}(a+(\overline{u}+\varepsilon)(c\tau K+\overline{M})-(\underline{u}-\varepsilon)(c\tau K+\underline{M})-(b+\chi_2\mu_2-\chi_1\mu_1)\overline{U} )\quad t> T_{\varepsilon}\cr
\overline{U}(T_{\varepsilon})=\|u(\cdot,T_{\varepsilon};u_0)\|_{\infty}.
\end{cases}
$$
Since $\|u(\cdot,T_{\varepsilon};u_0)\|_{\infty}>0$ and $b+\chi_2\mu_2-\chi_1\mu_1>0$, we have that $\overline{U}^{\varepsilon}(t)$ is defined for all time and satisfies
$$ \lim_{t\to\infty}\overline{U}^{\varepsilon}(t)= \frac{(a+(\overline{u}+\varepsilon)(c\tau K+\overline{M})-(\underline{u}-\varepsilon)(c\tau K+\underline{M}) )_{+}}{b+\chi_2\mu_2-\chi_1\mu_1}.$$
Thus, it follows from \eqref{e5} that
$$
\overline{u}\leq \frac{(a+(\overline{u}+\varepsilon)(c\tau K+\overline{M})-(\underline{u}-\varepsilon)(c\tau K+\underline{M}) )_{+}}{b+\chi_2\mu_2-\chi_1\mu_1},\quad \forall\, \varepsilon>0.
$$
Letting $\varepsilon\to0$ in the last inequality, we obtain
\begin{equation*}
\overline{u}\leq \frac{(a+\overline{u}(c\tau K+\overline{M})-\underline{u}(c\tau K+\underline{M}) )_{+}}{b+\chi_2\mu_2-\chi_1\mu_1}.
\end{equation*}
But, note that $ (a+\overline{u}(c\tau K+\overline{M})-\underline{u}(c\tau K+\underline{M}) )_{+}=0$ would implies that $\underline{u}=\overline{u}=0$. Which in turn yields that
$$
0=(a+\overline{u}(c\tau K+\overline{M})-\underline{u}(c\tau K+\underline{M}) )_{+}=a.
$$
This is impossible since $a>0$. Hence $(a-\chi\underline{u}+\frac{\chi\tau c}{2}(\overline{u}-\underline{u}))_{+}>0$. Whence
\begin{equation}\label{e6}
\overline{u}\leq \frac{a+\overline{u}(c\tau K+\overline{M})-\underline{u}(c\tau K+\underline{M})}{b+\chi_2\mu_2-\chi_1\mu_1}
\end{equation}

Next, using again inequalities \eqref{e1} and \eqref{e3}, for every $t\geq T_{\varepsilon}$, we have that
$$
u_t\geq u_{xx}+(c+(\chi_2v_2-\chi_1 v_1)_{x})u_x+(a+(\underline{u}-\varepsilon)(c\tau K +\overline{M})-(\overline{u}+\varepsilon)(c\tau K+\underline{M})-(b+\chi_2\mu_2-\chi_1\mu_1)u ).
$$
Thus, comparison principle for parabolic equations implies that
\begin{equation}\label{e7}
u(x,t;u_0)\geq \underline{U}^{\varepsilon}(t),\quad \forall\, x\in\R,\ \forall\, t\geq T_{\varepsilon},
\end{equation}
where $\underline{U}^{\varepsilon}(t)$ is the solution of the ODE
$$
\begin{cases}
\underline{U}_{t}^\varepsilon=\underline{U}^\varepsilon(a+(\underline{u}-\varepsilon)(c\tau K +\overline{M})-(\overline{u}+\varepsilon)(c\tau K+\underline{M})-(b+\chi_2\mu_2-\chi_1\mu_1)\underline{U}^\varepsilon)\quad t> T_{\varepsilon}\cr
\underline{U}^\varepsilon(T_{\varepsilon})=\inf_{x\in\R}u(x,T_{\varepsilon},u_0).
\end{cases}
$$
From \eqref{e00} we know that  $\inf_{x\in\R}u(x,T_{\varepsilon},u_0)>0$.  Thus, using the fact $b+\chi_2\mu_2-\chi_1\mu_1>0$, we have that $\underline{U}^\varepsilon$ is defined for all time and satisfies
$$ \lim_{t\to\infty}\underline{U}^\varepsilon= \frac{(a+(\underline{u}-\varepsilon)(c\tau K +\overline{M})-(\overline{u}+\varepsilon)(c\tau K+\underline{M}))_{+}}{b+\chi_2\mu_2-\chi_1\mu_1}.$$ Thus, it follows from \eqref{e7} that
$$
\underline{u}\geq \frac{(a+(\underline{u}-\varepsilon)(c\tau K +\overline{M})-(\overline{u}+\varepsilon)(c\tau K+\underline{M}))_{+}}{b+\chi_2\mu_2-\chi_1\mu_1},\quad \forall\, \varepsilon>0.
$$
Letting $\varepsilon$ tends to 0 in the last inequality, we obtain that
\begin{equation}\label{e8}
\underline{u}\geq \frac{(a+\underline{u}(c\tau K +\overline{M})-\overline{u}(c\tau K+\underline{M}))_{+}}{b+\chi_2\mu_2-\chi_1\mu_1}.
\end{equation}
It follows from inequality \eqref{e6} and \eqref{e8} that
$$
(b+\chi_2\mu_2-\chi_1\mu_1)(\overline{u}-\underline{u})\leq  (2c\tau K+\overline{M}+\underline{M})(\overline{u}-\underline{u}).
$$
Which is equivalent to
\begin{equation}\label{e9}
(b+\chi_2\mu_2-\chi_1\mu_1-2c\tau K-\overline{M}-\underline{M})(\overline{u}-\underline{u})\leq 0.
\end{equation}
Observe that  $b+\chi_2\mu_2-\chi_1\mu_1-2c\tau K-\overline{M}-\underline{M}=b+2\chi_2\mu_2-2\chi_1\mu_1-2c\tau K>0 $. Thus, it follows from \eqref{e9} that $\overline{u}=\underline{u}$. Thus it follows from \eqref{e6}  and \eqref{e8} that $(b+\chi_2\mu_2-\chi_1\mu_1-\overline{M}+\underline{M})\overline{u}= a $. Combining this with \eqref{e10}, we conclude that $\overline{u}=\underline{u}=\frac{a}{b}$.
\end{proof}

\section{Super- and Sub- solutions}

In this section, we will construct super- and sub-solutions of some related equations of \eqref{Main-eq2}, which will be used to prove the existence of traveling wave solutions of  \eqref{Main-eq1} in next section. Throughout this section we suppose that $a>0$  and $b>0$ are given positive real numbers.

 Note that, for given $c$, to show the existence of a traveling wave solution of  \eqref{Main-eq1} connecting $(\frac{a}{b},\frac{a\mu_1}{b\lambda_1},\frac{a\mu_2}{b\lambda_2})$ and $(0,0,0)$ is  equivalent to show the existence of a stationary solution connecting $(\frac{a}{b},\frac{a\mu_1}{b\lambda_1},\frac{a\mu_2}{b\lambda_2})$ and $(0,0,0)$.

For every $\tau>0,\ 0<\mu <\min\{\sqrt{a}, \sqrt{\frac{\lambda_1+\tau a}{(1-\tau)_{+}}},\sqrt{\frac{\lambda_2+\tau a}{(1-\tau)_{+}}}\}$ and $x\in \R$ define
$$\varphi_{\tau,\mu}(x)=e^{-\mu x}$$ and set
$$
c_{\mu}=\mu+\frac{a}{\mu}.
 $$

Note that for every fixed  $\tau>0$ and $0<\mu  < \min\{\sqrt{a}, \sqrt{\frac{\lambda_1+\tau a}{(1-\tau)_{+}}},\sqrt{\frac{\lambda_2+\tau a}{(1-\tau)_{+}}}\}$,  $\lambda_i+\tau \mu c_\mu-\mu^2>0$ and the function $\varphi_{\tau,\mu}$ is decreasing, infinitely many differentiable, and satisfies
 \begin{equation}\label{Eq1 of varphi}
\varphi_{\tau,\mu}''(x)+c_{\mu}\varphi_{\tau,\mu}'(x)+a\varphi_{\tau,\mu}(x)=0 \quad\forall\ x\in\R
\end{equation}
and
\begin{equation}\label{Eq2 of varphi}
 \varphi_{\tau,\mu}''(x)+\tau  c_\mu\varphi' _{\tau,\mu}(x)-\lambda_i\varphi_{\tau,\mu}(x)+(\lambda_i+\tau \mu c_\mu -\mu^2)\varphi_{\tau,\mu}(x)=0\quad \forall\,\, x\in\R.
\end{equation}

For every $C_0>0,\ \tau>0$ and $0<\mu<\min\{\sqrt{a}, \sqrt{\frac{\lambda_1+\tau a}{(1-\tau)_{+}}}, \sqrt{\frac{\lambda_2+\tau a}{(1-\tau)_{+}}}\}$   define
\begin{equation}
U_{\tau,\mu,C_0}^{+}(x)=\min\{C_0, \varphi_{\tau,\mu}(x)\}=\begin{cases}
C_0 \ \quad \text{if }\ x\leq \frac{-\ln(C_0)}{\mu}\\
e^{-\mu x} \quad \ \text{if}\ x\geq \frac{-\ln(C_0)}{\mu}.
\end{cases}
\end{equation}
Since $\varphi_{\tau,\mu}$ is ia non decreasing, then the functions $U^{+}_{\tau,\mu, C_0}$ is non-increasing. Furthermore, the functions $U^{+}_{\tau,\mu,C_0}$ belongs to $C^{\delta}_{\rm unif}(\R)$ for every $0\leq \delta< 1$, $\tau>0$,  $0< \mu< \min\{\sqrt{a}, \sqrt{\frac{\lambda_1+\tau a}{(1-\tau)_{+}}}, \sqrt{\frac{\lambda_2+\tau a}{(1-\tau)_{+}}}\}$, and $C_0>0$.

Let $C_0>0,\ \tau>0$ and $0< \mu<\min\{\sqrt{a}, \sqrt{\frac{\lambda_1+\tau a}{(1-\tau)_{+}}},\sqrt{\frac{\lambda_2+\tau a}{(1-\tau)_{+}}}\}$ be fixed. Next, let $\mu<\tilde{\mu}<\min\{2\mu, \sqrt{a}, \sqrt{\frac{\lambda_1+\tau a}{(1-\tau)_{+}}},\sqrt{\frac{\lambda_2+\tau a}{(1-\tau)_{+}}}\}$ and $d>\max\{1, C_0^{\frac{\mu-\tilde{\mu}}{\mu}}\}$. The function $\varphi_{\tau,\mu}-d\varphi_{\tau,\tilde{\mu}}$ attains its maximum value at $\bar{a}_{\mu,\tilde{\mu},d}:=\frac{\ln(d\tilde{\mu})-\ln(\mu)}{\tilde{\mu}-\mu}$ and takes the value zero at $\underline{a}_{\mu,\tilde{\mu},d}:= \frac{\ln(d)}{\tilde{\mu}-\mu}$.
%For every  $\underline{a}_{\mu,\tilde{\mu},d} < x_{1} < \bar{a}_{\mu,\tilde{\mu},d}$, we shall associate the open interval $\Omega_{x_{1}}=(x_{1}\ ,  \ x_{2})$ where $x_{2}>\bar{a}_{\mu,\tilde{\mu},d}$ is the only real number satisfying
%\begin{equation}
%\varphi_{\mu}(x_{1})-d\varphi_{\tilde{\mu}}(x_{1})=\varphi_{\mu}(x_{2})-d\varphi_{\tilde{\mu}}(x_{2}).
%\end{equation}
Define
\begin{equation}
U_{\tau,\mu,C_0}^{-}(x):= \max\{ 0, \varphi_{\tau,\mu}(x)-d\varphi_{\tau,\tilde{\mu}}(x)\}=\begin{cases}
0\qquad \qquad \qquad \quad \text{if}\ \ x\leq \underline{a}_{\mu,\tilde{\mu},d}\\
\varphi_{\tau,\mu}(x)-d\varphi_{\tau,\tilde{\mu}}(x)\quad \text{if}\ x\geq \underline{a}_{\mu,\tilde{\mu},d}.
\end{cases}
\end{equation}
From the choice of $d$, it follows that  $0\leq U_{\tau,\mu,C_0}^{-}\leq U^{+}_{\tau,\mu,C_0}\leq C_0$ and $U_{\tau,\mu,C_0}^{-}\in C^{\delta}_{\rm unif}(\R)$ for every $0\leq \delta< 1$. Finally, let us consider the set $\mathcal{E}_{\tau,\mu}(C_0)$ defined by
\begin{equation}\label{definition-E-mu}
\mathcal{E}_{\tau,\mu}(C_0)=\{u\in C^{b}_{\rm unif}(\R) \,|\, U_{\tau,\mu,C_0}^{-}\leq u\leq U_{\tau,\mu,C_0}^{+}\}.
\end{equation}
It should be noted that $U_{\tau,\mu,C_0}^{-}$ and $\mathcal{E}_{\tau,\mu}(C_0)$ all depend on $\tilde{\mu}$ and $d$. Later on, we shall provide more information on how to choose $d$ and $\tilde{\mu}$ whenever $\tau$, $\mu$  and $C_0$ are given.

For every $u\in C_{\rm unif}^b(\R)$, consider
\begin{align}\label{ODE2}
U_{t}=&U_{xx}+(c_{\mu}+(\chi_2V_2-\chi_1V_1)_{x}(x;u))U_{x}+(a+(\chi_2\lambda_2V_2-\chi_1\lambda_1V_1)(x;u))U\nonumber\\
&+(c\tau(\chi_1V_1-\chi_2V_2)_{x}(x,u)-(b+\chi_2\mu_2-\chi_1\mu_1)U)U, \quad x\in \R,
\end{align}
where
\begin{equation}\label{Inverse of u}
V_i(x;u)=\mu_i\int_{0}^{\infty}\int_{\R}\frac{e^{-\lambda_is}}{\sqrt{4\pi s}}e^{-\frac{|x-z|^{2}}{4s}}u(z+\tau c_{\mu}s)dzds.
\end{equation}
For given $u\in C_{\rm unif}^b(\R)$, it is well known that the function $V_{1}(x;u)$
 and $V_{2}(x,u)$ are the solutions of the second and third equations of \eqref{Main-eq2} in $C^{b}_{\rm unif}(\R)$ respectively.

For  given open intervals $D\subset \R$ and $I\subset \R$, a function $U(\cdot,\cdot)\in C^{2,1}(D\times I,\R)$ is called a {\it super-solution} (respectively {\it sub-solution}) of \eqref{ODE2} on $D\times I$ if
\begin{align*}
U_{t}\ge & U_{xx}+(c_{\mu}+(\chi_2V_2-\chi_1V_1)_{x}(x;u))U_{x}+(a+(\chi_2\lambda_2V_2-\chi_1\lambda_1V_1)(x;u))U\nonumber\\
&+(c_{\mu}\tau(\chi_1V_1-\chi_2V_2)_{x}(x,u)-(b+\chi_2\mu_2-\chi_1\mu_1)U)U, \quad {\rm for}\,\, x\in D,\,\,\, t\in I
\end{align*}
(respectively
\begin{align*}
U_{t} \leq & U_{xx}+(c_{\mu}+(\chi_2V_2-\chi_1V_1)_{x}(x;u))U_{x}+(a+(\chi_2\lambda_2V_2-\chi_1\lambda_1V_1)(x;u))U\nonumber\\
&+(c_{\mu}\tau(\chi_1V_1-\chi_2V_2)_{x}(x,u)-(b+\chi_2\mu_2-\chi_1\mu_1)U)U, \quad {\rm for}\,\, x\in D,\,\,\, t\in I.
\end{align*})

Next, we state the main result of this section. For convenience, we introduce the following standing assumption.

\noindent { {\bf (H)} {\it $\tau>0$, $0<\mu<\min\{\sqrt{a}, \sqrt{\frac{\lambda_1+\tau a}{(1-\tau)_{+}}},\sqrt{\frac{\lambda_2+\tau a}{(1-\tau)_{+}}}\}$,  $b+\chi_2\mu_2-\chi_1\mu_1>(\tau c_{\mu}+\mu)K_{\tau,\mu}+\overline{M}_{\tau,\mu}$, and
\begin{align}\label{Eq01_Th1}
b+2(\chi_2\mu_2-\chi_1\mu_1)>2(\overline{M}+\tau c_{\mu}K).
\end{align}
where $\overline{M}$, $K$, $\overline{M}_{\tau,\mu}$,  and $K_{\tau,\mu}$ are given by \eqref{M-plus-equation}, \eqref{K-equation}, \eqref{M-tau-plus-equation},  and \eqref{K-tau-equation} respectively.}
}

\begin{tm}\label{super-sub-solu-thm}
Assume {\bf (H)}. Then  the following hold.

\begin{itemize}

\item[(1)]There is a positive real number $\tilde{C}_{0}>0$, $\tilde{C}_0=\tilde{C}_{0}(\tau,\chi_1,\lambda_1,\mu_1,\chi_2,\lambda_2,\mu_2,\mu)$, such for every $C_{0}\geq \tilde{C}_{0}$, and for every $u\in \mathcal{E}_{\tau,\mu}(C_0)$, we have that  $U(x,t)=C_0$ is  supper-solutions of \eqref{ODE2} on $\R\times\R$.

\item[(2)] For every $C_0>0$ and for every $u\in \mathcal{E}_{\tau,\mu}(C_0)$, $U(x,t)=\varphi_{\tau,\mu}(x)$ is a supper-solutions of \eqref{ODE2} on $\R\times\R$.

\item[(3)] For every $C_0>0$, there is $d_0>\max\{1,C_{0}^{\frac{\mu-\tilde{\mu}}{\mu}}\}$, $d_0=d_{0}(\tau,\chi_1,\lambda_1,\mu_1,\chi_2,\lambda_2,\mu_2,\mu)$, such  that for every $u\in \mathcal{E}_{\tau,\mu}(C_0)$, we have that  $U(x,t)=U_{\tau,\mu,C_0}^-(x)$ is a sub-solution of \eqref{ODE2} on
$(\underline{a}_{\mu,\tilde{\mu},d},\infty)\times \R$ for all $d\ge d_0$ and $\mu< \tilde{\mu}<\min\{\sqrt{a},\ \sqrt{\frac{\lambda_1+\tau a}{(1-\tau)_{+}}},\ \sqrt{\frac{\lambda_2+\tau a}{(1-\tau)_{+}}},\ 2\mu,\mu+\frac{b+2\chi_2\mu_2-\chi_1\mu_1-(\tau c_{\mu}+\mu)K_{\tau,\mu}}{1+K_{\tau,\mu}}\}$.

\item[(4)] Let $\tilde{C}_{0}$ be given by (1), then for every $u\in \mathcal{E}_{\tau,\mu}(\tilde C_0)$, $U(x,t)=U_{\tau,\mu,\tilde C_0}^-(x_\delta)$ is a sub-solution of \eqref{ODE2} on $\R\times \R$ for $0<\delta\ll 1$,
where $x_\delta=\underline{a}_{\mu,\tilde{\mu},d}+\delta$.
\end{itemize}
\end{tm}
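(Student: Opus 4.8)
The plan is to verify each of the four claims by substituting the candidate function into the differential inequality from the definition of super-/sub-solution of \eqref{ODE2}, using the elementary identities \eqref{Eq1 of varphi}--\eqref{Eq2 of varphi} for $\varphi_{\tau,\mu}$ to cancel the leading linear terms, and then absorbing the remaining chemotaxis contributions by two kinds of a priori bound on the maps $u\mapsto V_i(x;u)$ and $u\mapsto\partial_xV_i(x;u)$: a \emph{crude} one using only $\|u\|_\infty\le C_0$, and a \emph{sharp} one exploiting the pointwise control $u\le\varphi_{\tau,\mu}$ available on $\mathcal E_{\tau,\mu}(C_0)$ (cf.\ \eqref{definition-E-mu}). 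The first condition of {\bf (H)} is exactly what guarantees that the integrals $\overline M_{\tau,\mu}$, $\underline M_{\tau,\mu}$, $K_{\tau,\mu}$ converge.

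For the crude bound I would repeat verbatim the Gaussian computation of the proof of Theorem~A (with the drift $\tau c_\mu s$ in place of $\tau cs$, harmless by translation invariance of the estimate), getting $-\underline M\|u\|_\infty\le(\chi_2\lambda_2V_2-\chi_1\lambda_1V_1)(x;u)\le\overline M\|u\|_\infty$ and $\partial_x(\chi_1V_1-\chi_2V_2)(x;u)\le K\|u\|_\infty$. With these, substituting $U\equiv C_0$ into \eqref{ODE2} reduces the super-solution inequality to $a\le\big(b+\chi_2\mu_2-\chi_1\mu_1-\overline M-c_\mu\tau K\big)C_0$; since \eqref{Eq01_Th1} (equivalently $b>2(\underline M+c_\mu\tau K)$, using $\chi_2\mu_2-\chi_1\mu_1=\overline M-\underline M$) forces the bracket to be strictly positive, part~(1) holds with $\tilde C_0:=a/\big(b+\chi_2\mu_2-\chi_1\mu_1-\overline M-c_\mu\tau K\big)$. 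For the sharp bound I would insert $u(z+\tau c_\mu s)\le e^{-\mu(z+\tau c_\mu s)}$ and complete the square to get $\int_\R(4\pi s)^{-1/2}e^{-|x-z|^2/4s}e^{-\mu z}\,dz=e^{s\mu^2}\varphi_{\tau,\mu}(x)$; for the gradient, splitting the $z$-integral at $z=x$ and the $s$-integral at the sign change of $\chi_1\mu_1e^{-\lambda_1s}-\chi_2\mu_2e^{-\lambda_2s}$, and bounding the complementary-error-function remainders by a constant, produces the weight $(1+\mu\sqrt{\pi s})/\sqrt{\pi s}$; integrating in $s$ gives exactly $-\underline M_{\tau,\mu}\varphi_{\tau,\mu}(x)\le(\chi_2\lambda_2V_2-\chi_1\lambda_1V_1)(x;u)\le\overline M_{\tau,\mu}\varphi_{\tau,\mu}(x)$ and $|\partial_x(\chi_1V_1-\chi_2V_2)(x;u)|\le K_{\tau,\mu}\varphi_{\tau,\mu}(x)$ for $u\in\mathcal E_{\tau,\mu}(C_0)$.

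Given the sharp bound, part~(2) is immediate: plugging $U=\varphi_{\tau,\mu}$ into \eqref{ODE2}, using \eqref{Eq1 of varphi} and $\varphi_{\tau,\mu}'=-\mu\varphi_{\tau,\mu}$, and dividing by $\varphi_{\tau,\mu}>0$, the super-solution inequality becomes $(\mu+c_\mu\tau)\,\partial_x(\chi_1V_1-\chi_2V_2)(x;u)+(\chi_2\lambda_2V_2-\chi_1\lambda_1V_1)(x;u)\le(b+\chi_2\mu_2-\chi_1\mu_1)\varphi_{\tau,\mu}(x)$, which the sharp bound reduces to $(\mu+c_\mu\tau)K_{\tau,\mu}+\overline M_{\tau,\mu}\le b+\chi_2\mu_2-\chi_1\mu_1$, i.e.\ the second hypothesis of {\bf (H)}. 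For part~(3), substitute $U=U^-_{\tau,\mu,C_0}=\varphi_{\tau,\mu}-d\varphi_{\tau,\tilde\mu}$ on $(\underline a_{\mu,\tilde\mu,d},\infty)$, where $U^->0$: since $\varphi_{\tau,\tilde\mu}''+c_\mu\varphi_{\tau,\tilde\mu}'+a\varphi_{\tau,\tilde\mu}=(\tilde\mu^2-c_\mu\tilde\mu+a)\varphi_{\tau,\tilde\mu}$ and $\tilde\mu^2-c_\mu\tilde\mu+a<0$ (the roots of $\nu\mapsto\nu^2-c_\mu\nu+a$ are $\mu$ and $a/\mu$, and $\mu<\tilde\mu<\sqrt a\le a/\mu$), the part $U_{xx}+c_\mu U_x+aU$ of the right-hand side of \eqref{ODE2} equals the strictly positive $d\,(c_\mu\tilde\mu-\tilde\mu^2-a)\varphi_{\tau,\tilde\mu}$. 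Using $0\le U^-\le\varphi_{\tau,\mu}$ and $|\partial_xU^-|\le(\mu+\tilde\mu)\varphi_{\tau,\mu}$, the three chemotaxis terms and the quadratic term are, on $(\underline a_{\mu,\tilde\mu,d},\infty)\subset(0,\infty)$, bounded below by $-(\mathrm{const})\,\varphi_{\tau,\mu}^2-(\mathrm{const})\,d\,\varphi_{\tau,\mu}\varphi_{\tau,\tilde\mu}$ with constants depending only on $b$, $\chi_i\mu_i$, $\mu$, $\tilde\mu$, $\tau$ and the sharp-bound constants; since $\tilde\mu<2\mu$ one has, for $x\ge\underline a_{\mu,\tilde\mu,d}$, $\varphi_{\tau,\mu}(x)^2\le d^{-(2\mu-\tilde\mu)/(\tilde\mu-\mu)}\varphi_{\tau,\tilde\mu}(x)$ and $\varphi_{\tau,\mu}(x)\varphi_{\tau,\tilde\mu}(x)\le d^{-\mu/(\tilde\mu-\mu)}\varphi_{\tau,\tilde\mu}(x)$, so each error term is $o(d)\,\varphi_{\tau,\tilde\mu}$ as $d\to\infty$; choosing $d_0$ large (and $\tilde\mu$ in the stated range, which in particular makes $d_0>\max\{1,C_0^{(\mu-\tilde\mu)/\mu}\}$) then yields the sub-solution inequality for all $d\ge d_0$.

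Finally, part~(4): since the definition of sub-solution requires membership in $C^{2,1}$, the cornered function $U^-_{\tau,\mu,\tilde C_0}$ cannot serve on all of $\R$, so one replaces it by the constant $\epsilon_\delta:=U^-_{\tau,\mu,\tilde C_0}(x_\delta)$, $x_\delta=\underline a_{\mu,\tilde\mu,d}+\delta$, which $\to0$ as $\delta\downarrow0$. Substituting $U\equiv\epsilon_\delta$ into \eqref{ODE2} and using the crude bounds (in particular $(\chi_2\lambda_2V_2-\chi_1\lambda_1V_1)(x;u)\ge-\underline M\|u\|_\infty\ge-\underline M\tilde C_0$), the sub-solution inequality reduces to $(b+\chi_2\mu_2-\chi_1\mu_1)\epsilon_\delta\le a-(\underline M+c_\mu\tau K)\tilde C_0$; by the definition of $\tilde C_0$ and \eqref{Eq01_Th1} the right-hand side equals $a\,(b-2\underline M-2c_\mu\tau K)/\big(b+\chi_2\mu_2-\chi_1\mu_1-\overline M-c_\mu\tau K\big)>0$, so the inequality holds once $\delta$ is small. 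The step I expect to be genuinely delicate is the sharp gradient estimate: the $(z-x)$-weighted Gaussian integrated against $e^{-\mu z}$ must be handled on the four sign-regions (sign of $z-x$ times the sign of $\chi_1\mu_1e^{-\lambda_1s}-\chi_2\mu_2e^{-\lambda_2s}$), and it is precisely the region where the Gaussian weight and the decay $e^{-\mu z}$ reinforce one another that contributes the extra $\mu\sqrt{\pi s}$ appearing in \eqref{K-tau-equation}; getting the constant equal to $K_{\tau,\mu}$ rather than something larger is what makes {\bf (H)} just strong enough to close parts~(2)--(3). The uniform-in-$d$ bookkeeping in~(3) and the choice of $\delta$ in~(4) are the remaining points requiring care, but become routine once these estimates are established.
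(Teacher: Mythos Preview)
Your proposal is correct and follows essentially the same architecture as the paper: the paper also first establishes the ``crude'' bounds $-\underline M C_0\le(\chi_2\lambda_2V_2-\chi_1\lambda_1V_1)\le\overline M C_0$, $|\partial_x(\chi_1V_1-\chi_2V_2)|\le KC_0$ and the ``sharp'' pointwise bounds $-\underline M_{\tau,\mu}\varphi_{\tau,\mu}\le(\chi_2\lambda_2V_2-\chi_1\lambda_1V_1)\le\overline M_{\tau,\mu}\varphi_{\tau,\mu}$, $|\partial_x(\chi_1V_1-\chi_2V_2)|\le K_{\tau,\mu}\varphi_{\tau,\mu}$ (Lemmas~\ref{Mainlem2}--\ref{Mainlem3}), and then feeds them into $\mathcal L C_0$, $\mathcal L\varphi_{\tau,\mu}$, $\mathcal L U^-_{\tau,\mu,C_0}$, $\mathcal L(U^-_{\tau,\mu,\tilde C_0}(x_\delta))$ exactly as you describe, with the same threshold $\tilde C_0=a/(b+\chi_2\mu_2-\chi_1\mu_1-\overline M-c_\mu\tau K)$.

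The only notable organizational difference is in part~(3). The paper expands $\mathcal L U^-$ explicitly, isolates the strictly positive term $dA_0\varphi_{\tau,\tilde\mu}$ with $A_0=(\tilde\mu-\mu)(a-\mu\tilde\mu)/\mu$, uses $\varphi_{\tau,\mu}>d\varphi_{\tau,\tilde\mu}$ on $(\underline a_{\mu,\tilde\mu,d},\infty)$ to reduce the remainder to $-A_1\varphi_{\tau,\mu}^2+dA_2\varphi_{\tau,\mu}\varphi_{\tau,\tilde\mu}$, and then invokes the upper bound on $\tilde\mu$ involving $K_{\tau,\mu}$ precisely to make $A_2\ge 0$; this yields the explicit choice $d_0=\max\{1,A_1/A_0,C_0^{(\mu-\tilde\mu)/\mu}\}$. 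Your asymptotic-in-$d$ argument (bounding the error by $\varphi_{\tau,\mu}^2$- and $d\varphi_{\tau,\mu}\varphi_{\tau,\tilde\mu}$-terms and then dominating both by $o(d)\varphi_{\tau,\tilde\mu}$ via $\varphi_{\tau,\mu}^2\le d^{-(2\mu-\tilde\mu)/(\tilde\mu-\mu)}\varphi_{\tau,\tilde\mu}$ and $d\varphi_{\tau,\mu}\varphi_{\tau,\tilde\mu}\le d^{(\tilde\mu-2\mu)/(\tilde\mu-\mu)}\varphi_{\tau,\tilde\mu}$ on $x\ge\underline a_{\mu,\tilde\mu,d}$) is equally valid and in fact does not require $A_2\ge 0$; it gives existence of $d_0$ rather than an explicit formula. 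Either route proves the statement as written.
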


To prove Theorem \ref{super-sub-solu-thm}, we first establish some estimates on
$V_{i}(\cdot;u)$ and $\frac{d}{dx}V_{i}(\cdot;u)$.

It follows from \eqref{Inverse of u} that
\begin{equation}\label{Estimates on Inverse of u}
\max\{\|V_{i}(\cdot;u)\|_{\infty}, \ \|\frac{d}{dx}V_{i}(\cdot;u)\|_{\infty} \}\leq \frac{\mu_i}{\lambda_i}\|u\|_{\infty}\quad \forall\ u\in C^{b}_{\rm unif}(\R).
\end{equation}
Furthermore, let
$$
C_{\rm unif}^{2,b}(\R)=\{u\in C_{\rm unif}^b(\R)\,|\, u^{'}(\cdot),\, u^{''}(\cdot)\in C_{\rm unif}^b(\R)\}.
$$
For every $i\in\{1,2\}$ and $u\in C_{\rm unif}^{b}(\R)$, $u\geq 0,$ we have that  $V_{i}(\cdot;u)\in C^{2,b}_{\rm unif}(\R)$ with $V_{i}(\cdot;u)\geq 0$ and
$$\|\frac{d^2}{dx^2} V_i(\cdot;u)\|_{\infty}=\|\lambda_iV_i(\cdot;u)-\tau c_\mu\frac{d}{dx} V(\cdot;u)-\mu_iu\|_{\infty}\leq \|\lambda_iV_i(\cdot;u)-\mu_iu\|_{\infty}+\tau c_\mu \|\frac{d}{dx}V(\cdot;u)\|_{\infty}.
$$
Combining this with inequality \eqref{Estimates on Inverse of u}, we obtain that
\begin{equation}\label{Estimates on Inverse of V}
\max\{\|V_i(\cdot;u)\|_{\infty}, \ \|\frac{d}{dx}V_i(\cdot;u)\|_{\infty}, \|\frac{d^2}{dx^2}V(\cdot;u)\|_{\infty}\}\leq (\tau c_\mu+\mu_i+\frac{\mu_i}{\lambda_i})\|u\|_{\infty}\quad \forall\ u\in \mathcal{E}_{\tau,\mu}(C_0), \ i=1,2.
\end{equation}

The next Lemma provides  pointwise and uniform estimates  for $(\chi_2\lambda_2V_2-\chi_1\lambda_1V_1)(\cdot;u)$ whenever $u\in \mathcal{E}_{\tau, \mu}(C_0)$.

\begin{lem}\label{Mainlem2}
For every $i\in\{1,2\}$, $C_0>0$, $\tau>0$, $0<\mu<\min\{\sqrt{a},\ \sqrt{\frac{\lambda_1+\tau a}{(1-\tau)_{+}}},\sqrt{\frac{\lambda_2+\tau a}{(1-\tau)_{+}}}\}$ and $u\in \mathcal{E}_{\tau,\mu}(C_0)$, let $V_i(\cdot;u)$ be defined as in \eqref{Inverse of u}, then
\begin{align}\label{Eq_MainLem2}
&\max\left\{0, \int_{0}^{\infty}\overline{M}(s)(e^{-(\tau\mu c_{\mu}-\mu^2)s}\varphi_{\tau,\mu}(x)-de^{-(\tau\tilde{\mu}c_{\mu}-\tilde{\mu}^2)s}\varphi_{\tau,\tilde{\mu}}(x))ds \right\} -\min\left\{ \underline{M}C_0, \underline{M}_{\tau,\mu}\varphi_{\tau,\mu}(x)\right\}\nonumber\\
&\leq (\chi_2\lambda_2V_2-\chi_1\lambda_1 V_1)(x;u)\leq\nonumber\\
& \min\left\{ \overline{M}C_0, \overline{M}_{\tau,\mu}\varphi_{\tau,\mu}(x)\right\}-\max\left\{0, \int_{0}^{\infty}\underline{M}(s)(e^{-(\tau\mu c_{\mu}-\mu^2)s}\varphi_{\tau,\mu}(x)-de^{-(\tau\tilde{\mu}c_{\mu}-\tilde{\mu}^2)s}\varphi_{\tau,\tilde{\mu}}(x))ds \right\}
\end{align}
\end{lem}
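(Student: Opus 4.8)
The plan is to start from the integral representation \eqref{Inverse of u} of $V_i(\cdot;u)$ and, after the substitution $z = x + 2\sqrt{s}\,\zeta$ (which turns the heat kernel into $\frac{1}{\sqrt\pi}e^{-\zeta^2}$), write
$$
(\chi_2\lambda_2 V_2 - \chi_1\lambda_1 V_1)(x;u) = \int_0^\infty\int_{\R}\Big(\chi_2\lambda_2\mu_2 e^{-\lambda_2 s} - \chi_1\lambda_1\mu_1 e^{-\lambda_1 s}\Big)\frac{e^{-\zeta^2}}{\sqrt\pi}\, u\big(x + 2\sqrt{s}\,\zeta + \tau c_\mu s\big)\, d\zeta\, ds.
$$
Splitting the weight into its positive and negative parts $\overline{M}(s)$ and $\underline{M}(s)$, one gets an upper bound by replacing $u$ against the $\overline{M}(s)$-factor by its pointwise upper envelope and against the $\underline{M}(s)$-factor by its pointwise lower envelope; the lower bound is symmetric. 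This is exactly the same calculation already carried out in the proofs of Theorems A and B, only now the envelopes come from the definition \eqref{definition-E-mu} of $\mathcal{E}_{\tau,\mu}(C_0)$ rather than from constant sup/inf bounds.

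The two envelopes of $u\in\mathcal{E}_{\tau,\mu}(C_0)$ are $U^-_{\tau,\mu,C_0}\le u\le U^+_{\tau,\mu,C_0}$, and each of these is itself a \emph{minimum} (resp.\ \emph{maximum}) of two functions: $U^+_{\tau,\mu,C_0}=\min\{C_0,\varphi_{\tau,\mu}\}$ and $U^-_{\tau,\mu,C_0}=\max\{0,\varphi_{\tau,\mu}-d\varphi_{\tau,\tilde\mu}\}$. I would use these in two complementary ways. First, bounding $u\le C_0$ and $u\ge 0$ gives the crude terms $\overline{M}C_0$ and $\underline{M}C_0$ — here one uses that $\int_0^\infty\int_{\R}\overline M(s)\frac{e^{-\zeta^2}}{\sqrt\pi}\,d\zeta\,ds=\overline M$ and likewise for $\underline M$, since integrating out $\zeta$ just gives $1$. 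Second, bounding $u\le\varphi_{\tau,\mu}$ and using $\varphi_{\tau,\mu}(x+2\sqrt s\,\zeta+\tau c_\mu s)=e^{-2\mu\sqrt s\,\zeta}e^{-\tau\mu c_\mu s}\varphi_{\tau,\mu}(x)$, then carrying out the Gaussian integral $\frac{1}{\sqrt\pi}\int_{\R}e^{-2\mu\sqrt s\,\zeta}e^{-\zeta^2}\,d\zeta = e^{\mu^2 s}$, produces the factor $e^{-(\tau\mu c_\mu-\mu^2)s}$ and hence the terms $\overline M_{\tau,\mu}\varphi_{\tau,\mu}(x)$ and $\underline M_{\tau,\mu}\varphi_{\tau,\mu}(x)$ after recalling \eqref{M-tau-plus-equation}–\eqref{M-tau-minus-equation}. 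Taking whichever of the two bounds is smaller gives the $\min\{\overline M C_0,\overline M_{\tau,\mu}\varphi_{\tau,\mu}(x)\}$ form in the statement; the factor $\lambda_i+\tau\mu c_\mu-\mu^2>0$ guaranteed by \textbf{(H)} ensures all these integrals converge.

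For the remaining, more delicate terms — the expressions $\max\{0,\int_0^\infty\overline M(s)(e^{-(\tau\mu c_\mu-\mu^2)s}\varphi_{\tau,\mu}(x)-d\,e^{-(\tau\tilde\mu c_\mu-\tilde\mu^2)s}\varphi_{\tau,\tilde\mu}(x))\,ds\}$ and its analogue with $\underline M$ — I would use the \emph{lower} envelope $u\ge U^-_{\tau,\mu,C_0}=\max\{0,\varphi_{\tau,\mu}-d\varphi_{\tau,\tilde\mu}\}$ against the $\underline M(s)$ weight in the upper bound (and against the $\overline M(s)$ weight in the lower bound). The point is that $U^-_{\tau,\mu,C_0}\ge \varphi_{\tau,\mu}-d\varphi_{\tau,\tilde\mu}$ pointwise (with equality where the latter is nonnegative), so one may bound $u \ge \varphi_{\tau,\mu}-d\varphi_{\tau,\tilde\mu}$ and then repeat the same Gaussian-integral computation for each of the two exponentials $\varphi_{\tau,\mu}$ and $\varphi_{\tau,\tilde\mu}$ separately: this yields precisely the integrand $e^{-(\tau\mu c_\mu-\mu^2)s}\varphi_{\tau,\mu}(x)-d\,e^{-(\tau\tilde\mu c_\mu-\tilde\mu^2)s}\varphi_{\tau,\tilde\mu}(x)$ against $\underline M(s)$. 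Since the true integral is $\ge 0$ (the quantity $(\chi_2\lambda_2V_2-\chi_1\lambda_1V_1)$ plus a nonnegative contribution), one may freely replace such a term by its positive part, giving the $\max\{0,\cdot\}$ in the statement. Assembling the four one-sided bounds — two crude, two refined — yields \eqref{Eq_MainLem2}.

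The main obstacle I anticipate is purely bookkeeping: keeping the signs straight when the weight $\chi_2\lambda_2\mu_2 e^{-\lambda_2 s}-\chi_1\lambda_1\mu_1 e^{-\lambda_1 s}$ changes sign in $s$, so that on the set where it is positive one must use an \emph{upper} bound on $u$ and on the complementary set a \emph{lower} bound, and then combine these consistently for both the overall upper and lower estimates on $V$. There is no analytic difficulty beyond the two elementary Gaussian identities $\frac{1}{\sqrt\pi}\int_{\R}e^{-\zeta^2}d\zeta=1$ and $\frac{1}{\sqrt\pi}\int_{\R}e^{-2\mu\sqrt s\,\zeta}e^{-\zeta^2}d\zeta=e^{\mu^2 s}$; convergence of every integral is automatic from $\tau\mu c_\mu-\mu^2 > -\lambda_i$, i.e.\ $\lambda_i+\tau\mu c_\mu-\mu^2>0$, which is part of \textbf{(H)}.
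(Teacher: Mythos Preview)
Your proposal is correct and follows essentially the same route as the paper: write $(\chi_2\lambda_2V_2-\chi_1\lambda_1V_1)(x;u)$ via the kernel representation, split the weight into $\overline{M}(s)$ and $\underline{M}(s)$, bound $u$ above by $U^+_{\tau,\mu,C_0}=\min\{C_0,\varphi_{\tau,\mu}\}$ and below by $U^-_{\tau,\mu,C_0}=\max\{0,\varphi_{\tau,\mu}-d\varphi_{\tau,\tilde\mu}\}$, and evaluate the resulting Gaussian integrals by completing the square to produce the factors $e^{-(\tau\mu c_\mu-\mu^2)s}$ and $e^{-(\tau\tilde\mu c_\mu-\tilde\mu^2)s}$. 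The paper organizes this into four quantities $M_1,\dots,M_4$ and bounds each separately, but the content is exactly what you describe.
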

\begin{proof} For every  $u\in \mathcal{E}_{\tau,\mu}(C_0)$, since $ 0 \leq U^{-}_{\tau,\mu,C_0}\leq u\leq U^{+}_{\tau,\mu,C_0}$ then
for every $x\in \R$, setting $Z=\chi_2\lambda_2V_2-\chi_1\lambda_1 V_1$, we have that
\begin{align}\label{A-eq1}
Z(x;u)&=\int_0^{\infty}\int_{\R}\left(\chi_2\mu_2\lambda_2e^{-\lambda_2 s}-\chi_1\mu_1\lambda_1e^{-\lambda_2 s}\right)\frac{e^{-z^2}}{\sqrt{\pi}}u(x+2\sqrt{s}z+c_{\mu}\tau s)dzds\nonumber\\
&\leq \underbrace{\int_0^{\infty}\int_{\R}\overline{M}(s)\frac{e^{-z^2}}{\sqrt{\pi}}U^+_{\tau,\mu,C_0}(x+2\sqrt{s}z+c_{\mu}\tau s)dzds}_{M_1}\nonumber\\
&- \underbrace{\int_0^{\infty}\int_{\R}\underline{M}(s)\frac{e^{-z^2}}{\sqrt{\pi}}U^-_{\tau,\mu,C_0}(x+2\sqrt{s}z+c_{\mu}\tau s)dzds}_{M_2},
\end{align}
 and
\begin{align}\label{A-eq2}
Z(x;u)&\geq \underbrace{\int_0^{\infty}\int_{\R}\overline{M}(s)\frac{e^{-z^2}}{\sqrt{\pi}}U^{-}_{\tau,\mu,C_0}(x+2\sqrt{s}z+c_{\mu}\tau s)dzds}_{M_3}\nonumber\\
&- \underbrace{\int_0^{\infty}\int_{\R}\underline{M}(s)\frac{e^{-z^2}}{\sqrt{\pi}}U^{+}_{\tau,\mu,C_0}(x+2\sqrt{s}z+c_{\mu}\tau s)dzds}_{M_4}.
\end{align}

Observe that

\begin{align}\label{A-eq3}
M_{1}& \leq \min\left\{C_0\int_0^{\infty}\int_{\R}\overline{M}(s)\frac{e^{-z^2}}{\sqrt{\pi}}dzds, \int_0^{\infty}\int_{\R}\overline{M}(s)\frac{e^{-z^2}}{\sqrt{\pi}}\varphi_{\tau,\mu}(x+2\sqrt{s}z+\tau c_{\mu}s)dzds \right\}\nonumber\\
&= \min\left\{\overline{M}C_0,\varphi_{\tau,\mu}(x) \int_0^{\infty}\int_{\R}\overline{M}(s)\frac{e^{-z^2}}{\sqrt{\pi}}e^{-\mu(2\sqrt{s}z+\tau c_{\mu}s)}dzds \right\}\nonumber\\
&= \min\left\{\overline{M}C_0,\varphi_{\tau,\mu}(x) \int_0^{\infty}\underbrace{\overline{M}(s)e^{-(\tau\mu c_{\mu} -\mu^2c)s}}_{\overline{M}_{\tau,\mu}(s)}\underbrace{\left[\int_{\R}\frac{e^{-(z+\sqrt{s}\mu)^2}}{\sqrt{\pi}}dz\right]}_{=1}ds \right\}\nonumber\\
&=\min\left\{ \overline{M}C_0, \overline{M}_{\tau,\mu}\varphi_{\tau,\mu}(x)\right\},
\end{align}

\begin{align}\label{A-eq4}
M_{2}& \geq \max\left\{0, \int_0^{\infty}\int_{\R}\underline{M}(s)\frac{e^{-z^2}}{\sqrt{\pi}}(\varphi_{\tau,\mu}(x+2\sqrt{s}z+\tau c_{\mu}s)-d \varphi_{\tau,\tilde{\mu}}(x+2\sqrt{s}z+\tau c_{\mu}s) )dzds \right\}\nonumber\\
&= \max\left\{0,\int_0^{\infty}\underline{M}(s)\left[\varphi_{\tau,\mu}(x)\underbrace{\int_{\R}\frac{e^{-z^2}}{\sqrt{\pi}}e^{-\mu(2\sqrt{s}z+\tau c_{\mu}s)}dz}_{=e^{-(\tau\mu c_{\mu}-\mu^2)s}}-\varphi_{\tau,\tilde{\mu}}(x)\underbrace{\int_{\R}\frac{e^{-z^2}}{\sqrt{\pi}}e^{-\tilde{\mu}(2\sqrt{s}z+\tau c_{\mu}s)}dz}_{=e^{-(\tau\tilde{\mu}c_{\mu}-\tilde{\mu}^2)s}}\right]ds\right\}\nonumber\\
&= \max\left\{0, \int_{0}^{\infty}\underline{M}(s)(e^{-(\tau\mu c_{\mu}-\mu^2)s}\varphi_{\tau,\mu}(x)-de^{-(\tau\tilde{\mu}c_{\mu}-\tilde{\mu}^2)s}\varphi_{\tau,\tilde{\mu}}(x))ds \right\},
\end{align}

\begin{align}\label{A-eq5}
M_{3}& \geq \max\left\{0, \int_0^{\infty}\int_{\R}\overline{M}(s)\frac{e^{-z^2}}{\sqrt{\pi}}(\varphi_{\tau,\mu}(x+2\sqrt{s}z+\tau c_{\mu}s)-d \varphi_{\tau,\tilde{\mu}}(x+2\sqrt{s}z+\tau c_{\mu}s) )dzds \right\}\nonumber\\
%&= \max\left\{0,\int_0^{\infty}\overline{M}(s)\left[\varphi_{\tau,\mu}(x)\underbrace{\int_{\R}\frac{e^{-z^2}}{\sqrt{\pi}}e^{-\mu(2\sqrt{s}z+\tau c_{\mu}s)}dz}_{=e^{-(\tau\mu c_{\mu}-\mu^2)s}}-\varphi_{\tau,\tilde{\mu}}(x)\underbrace{\int_{\R}\frac{e^{-z^2}}{\sqrt{\pi}}e^{-\tilde{\mu}(2\sqrt{s}z+\tau c_{\mu}s)}dz}_{=e^{-(\tau\tilde{\mu}c_{\mu}-\tilde{\mu}^2)s}}\right]ds\right\}\nonumber\\
&= \max\left\{0, \int_{0}^{\infty}\overline{M}(s)(e^{-(\tau\mu c_{\mu}-\mu^2)s}\varphi_{\tau,\mu}(x)-de^{-(\tau\tilde{\mu}c_{\mu}-\tilde{\mu}^2)s}\varphi_{\tau,\tilde{\mu}}(x))ds \right\},
\end{align}
and
\begin{align}\label{A-eq6}
M_{4}& \leq \min\left\{C_0\int_0^{\infty}\int_{\R}\underline{M}(s)\frac{e^{-z^2}}{\sqrt{\pi}}dzds, \int_0^{\infty}\int_{\R}\underline{M}(s)\frac{e^{-z^2}}{\sqrt{\pi}}\varphi_{\tau,\mu}(x+2\sqrt{s}z+\tau c_{\mu}s)dzds \right\}\nonumber\\
%&= \min\left\{\underline{M}C_0,\varphi_{\tau,\mu}(x) \int_0^{\infty}\int_{\R}\underline{M}(s)\frac{e^{-z^2}}{\sqrt{\pi}}e^{-\mu(2\sqrt{s}z+\tau c_{\mu}s)}dzds \right\}\nonumber\\
%&= \min\left\{\underline{M}C_0,\varphi_{\tau,\mu}(x) \int_0^{\infty}\underbrace{\underline{M}(s)e^{-(\tau\mu c_{\mu} -\mu^2c)s}}_{\underline{M}_{\tau,\mu}(s)}\underbrace{\left[\int_{\R}\frac{e^{-(z+\sqrt{s}\mu)^2}}{\sqrt{\pi}}dz\right]}_{=1}ds \right\}\nonumber\\
&=\min\left\{ \underline{M}C_0, \underline{M}_{\tau,\mu}\varphi_{\tau,\mu}(x)\right\},
\end{align}
The Lemma follows from inequalities \eqref{A-eq1}, \eqref{A-eq2}, \eqref{A-eq3}, \eqref{A-eq4},\eqref{A-eq5}, and \eqref{A-eq6}.
\end{proof}

\medskip

Next, we present a pointwise/uniform  estimate  for $\frac{d}{dx}(\chi_1V_1-\chi_2V_2)(\cdot;u)$ whenever $u\in \mathcal{E}_{\tau,\mu}(C_0).$

\medskip

\begin{lem}\label{Mainlem3} Let $\tau>0,\ C_0>0$
 and $0<\mu<\min\{\sqrt{a}, \sqrt{\frac{\lambda_1+\tau a}{(1-\tau)_{+}}},\sqrt{\frac{\lambda_2+\tau a}{(1-\tau)_{+}}}\}$ be fixed. Let $u\in C^{b}_{\rm unif}(\R)$ and $V_{i}(\cdot;u)\in C^{2,b}_{\rm unif}(\R)$ be defined by \eqref{Inverse of u}, then
\begin{align}\label{Eq_Mainlem01}
&\left|\frac{d}{dx}(\chi_1V_1-\chi_2V_2)(x;u)\right|\nonumber\\
&\leq\min\left\{KC_0\ ,\ \varphi_{\tau,\mu}(x)\int_0^{\infty}\left|\mu_1\chi_1e^{-\lambda_1 s}-\chi_2\mu_2e^{-\lambda_2 s}\right|\frac{(1+
\mu\sqrt{\pi s})e^{-(\tau\mu c_{\mu}-\mu^2)s}}{\sqrt{\pi s}}ds\right\}
\end{align}
for every $x \in\R$ and every $u\in\mathcal{E}_{\tau,\mu}(C_0)$, .
\end{lem}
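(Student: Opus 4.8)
The plan is to imitate the manipulation that produced \eqref{partial-of-v-in-x-eq}--\eqref{e2} in the proof of Theorem~B. First I would differentiate \eqref{Inverse of u} under the integral sign and substitute $z\mapsto x+2\sqrt{s}\,z$ to obtain, for every $u\in\mathcal{E}_{\tau,\mu}(C_0)$,
\[
\frac{d}{dx}(\chi_1V_1-\chi_2V_2)(x;u)=\int_0^\infty\int_\R\bigl(\mu_1\chi_1e^{-\lambda_1 s}-\chi_2\mu_2e^{-\lambda_2 s}\bigr)\frac{z\,e^{-z^2}}{\sqrt{\pi s}}\,u\bigl(x+2\sqrt{s}\,z+\tau c_{\mu}s\bigr)\,dz\,ds,
\]
which is \eqref{partial-of-v-in-x-eq} after a linear combination and with $c$ replaced by $c_{\mu}$. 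Write $g(s):=\mu_1\chi_1e^{-\lambda_1 s}-\chi_2\mu_2e^{-\lambda_2 s}$ and split it into positive and negative parts $g=g_+-g_-$.

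For the bound by $KC_0$ I would split the $z$-integral into $\{z>0\}$ and $\{z<0\}$, on which $z\,e^{-z^2}$ has constant sign, and use $0\le u\le C_0$: on $\{z>0\}$ the worst case is to keep only $g_+$ and replace $u$ by $C_0$, on $\{z<0\}$ the worst case is to keep only $g_-$ and replace $u$ by $C_0$. Since $\int_0^\infty z\,e^{-z^2}\,dz=\tfrac12$, this gives $\frac{d}{dx}(\chi_1V_1-\chi_2V_2)(x;u)\le\tfrac{C_0}{2}\int_0^\infty\frac{g_+(s)+g_-(s)}{\sqrt{\pi s}}\,ds=KC_0$ by \eqref{K-equation}, and the symmetric choices give $\ge-KC_0$. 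This is the same bookkeeping carried out after \eqref{partial-of-v-in-x-eq} in the proof of Theorem~A.

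For the bound by $\varphi_{\tau,\mu}(x)\,K_{\tau,\mu}$ I would instead use the upper bound $0\le u\le U_{\tau,\mu,C_0}^{+}\le\varphi_{\tau,\mu}$, so that $\bigl|u(x+2\sqrt{s}\,z+\tau c_{\mu}s)\bigr|\le\varphi_{\tau,\mu}(x)\,e^{-\mu(2\sqrt{s}\,z+\tau c_{\mu}s)}$; putting absolute values everywhere and factoring out $\varphi_{\tau,\mu}(x)$ reduces the problem to estimating $\int_\R|z|\,e^{-z^2-2\mu\sqrt{s}\,z}\,dz$. Completing the square rewrites this as $e^{\mu^2 s}\int_\R|\eta-\mu\sqrt{s}|\,e^{-\eta^2}\,d\eta$, and $|\eta-\mu\sqrt{s}|\le|\eta|+\mu\sqrt{s}$ together with $\int_\R|\eta|e^{-\eta^2}\,d\eta=1$ and $\int_\R e^{-\eta^2}\,d\eta=\sqrt{\pi}$ bounds it by $e^{\mu^2 s}(1+\mu\sqrt{\pi s})$. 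Reassembling the $s$-integral and comparing with \eqref{K-tau-equation} gives $\bigl|\frac{d}{dx}(\chi_1V_1-\chi_2V_2)(x;u)\bigr|\le\varphi_{\tau,\mu}(x)\,K_{\tau,\mu}$; taking the minimum of the two bounds finishes the proof, exactly parallel to Lemma~\ref{Mainlem2}.

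The only step that is not pure bookkeeping is this last integral: unlike in the pointwise estimate \eqref{e2}, once a bound proportional to $\varphi_{\tau,\mu}(x)$ is wanted one cannot exploit the cancellation between $g_+$ and $g_-$, so the triangle inequality is forced, and it is precisely this step that produces the weight $(1+\mu\sqrt{\pi s})$ appearing in $K_{\tau,\mu}$. Everything else is a direct transcription of the arguments already used for Theorems~A and~B and for Lemma~\ref{Mainlem2}.
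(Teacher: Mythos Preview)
Your proposal is correct and follows essentially the same route as the paper: write the derivative as the double integral with kernel $\frac{z\,e^{-z^2}}{\sqrt{\pi s}}$, obtain the uniform bound $KC_0$ by splitting over $\{z>0\}$ and $\{z<0\}$ together with $g=g_+-g_-$ and $0\le u\le C_0$, and obtain the pointwise bound $\varphi_{\tau,\mu}(x)K_{\tau,\mu}$ by replacing $u$ with $\varphi_{\tau,\mu}$, completing the square, and using $\int_\R|\eta-\mu\sqrt{s}|e^{-\eta^2}\,d\eta\le 1+\mu\sqrt{\pi s}$. The paper's proof is the same computation in the same order, so there is nothing to add.
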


\begin{proof} Let $u\in\mathcal{E}_{\tau,\mu}(C_0)$ and fix any $x\in \R$.
\begin{align}\label{A-eq7}
\left|\frac{d}{dx}(\chi_1V_1-\chi_2V_2)(x;u)\right|&\leq \int_0^\infty\int_\R\left|\mu_1\chi_1e^{-\lambda_1 s}-\chi_2\mu_2e^{-\lambda_2 s}\right|\frac{|z|e^{-z^2}}{\sqrt{\pi s}}U^{+}_{\tau,\mu}(x+2\sqrt{s}z+\tau c_{\mu}s)dzds\nonumber\\
&\leq \varphi_{\tau,\mu}(x)\int_0^\infty\int_{\R}\left|\mu_1\chi_1e^{-\lambda_1 s}-\chi_2\mu_2e^{-\lambda_2 s}\right|\frac{|z|e^{-(z+\sqrt{s}\mu)^2}}{\sqrt{\pi s}}e^{-(\tau\mu c_{\mu}-\mu^2)s}dzds \nonumber\\
&\leq \varphi_{\tau,\mu}(x)\int_0^{\infty}\left|\mu_1\chi_1e^{-\lambda_1 s}-\chi_2\mu_2e^{-\lambda_2 s}\right|\frac{e^{-(\tau\mu c_{\mu}-\mu^2)s}}{\sqrt{\pi s}}\left[\int_{\R}|z|e^{-(z+\sqrt{s}\mu)^2}dz\right]ds\nonumber\\
&\leq \varphi_{\tau,\mu}(x)\int_0^{\infty}\left|\mu_1\chi_1e^{-\lambda_1 s}-\chi_2\mu_2e^{-\lambda_2 s}\right|\frac{(1+
\mu\sqrt{\pi s})e^{-(\tau\mu c_{\mu}-\mu^2)s}}{\sqrt{\pi s}}ds .
\end{align}
Note that we have used the following fact in the last inequality
\begin{eqnarray*}
\int_{\R}|z|e^{-|z-\mu\sqrt{s}|^2}dz  = \int_{\R}|z+\mu\sqrt{s}|e^{-|z|^2}dz \leq   \int_{\R}(|z|+\mu\sqrt{s})e^{-|z|^2}dz= 1+\mu\sqrt{\pi s}.
\end{eqnarray*}
On the other hand, we have that

\begin{align}\label{A-eq8}
\frac{d}{dx}(\chi_1V_1-\chi_2V_2)(x;u)&= \int_0^\infty\int_\R\left(\mu_1\chi_1e^{-\lambda_1 s}-\chi_2\mu_2e^{-\lambda_2 s}\right)\frac{ze^{-z^2}}{\sqrt{\pi s}}u(x+2\sqrt{s}z+\tau c_{\mu}s)dzds\nonumber\\
&=\int_0^\infty\int_{0}^{\infty}\left(\mu_1\chi_1e^{-\lambda_1 s}-\chi_2\mu_2e^{-\lambda_2 s}\right)\frac{ze^{-z^2}}{\sqrt{\pi s}}u(x+2\sqrt{s}z+\tau c_{\mu}s)dzds\nonumber\\
&-\int_0^\infty\int_{0}^{\infty}\left(\mu_1\chi_1e^{-\lambda_1 s}-\chi_2\mu_2e^{-\lambda_2 s}\right)\frac{ze^{-z^2}}{\sqrt{\pi s}}u(x-2\sqrt{s}z+\tau c_{\mu}s)dzds\nonumber\\
&\leq\int_0^\infty\int_{0}^{\infty}\left(\mu_1\chi_1e^{-\lambda_1 s}-\chi_2\mu_2e^{-\lambda_2 s}\right)_{+}\frac{ze^{-z^2}}{\sqrt{\pi s}}u(x+2\sqrt{s}z+\tau c_{\mu}s)dzds\nonumber\\
&+\int_0^\infty\int_{0}^{\infty}\left(\mu_1\chi_1e^{-\lambda_1 s}-\chi_2\mu_2e^{-\lambda_2 s}\right)_{-}\frac{ze^{-z^2}}{\sqrt{\pi s}}u(x-2\sqrt{s}z+\tau c_{\mu}s)dzds\nonumber\\
&\leq\int_0^\infty\int_{0}^{\infty}\left|\mu_1\chi_1e^{-\lambda_1 s}-\chi_2\mu_2e^{-\lambda_2 s}\right|\frac{ze^{-z^2}}{\sqrt{\pi s}}C_0dzds = KC_0.
\end{align}
Similarly, we have that
\begin{equation}\label{A-eq9}
\frac{d}{dx}(\chi_2V_2-\chi_1V_1)(x;u)\leq \int_0^\infty\int_{0}^{\infty}\left|\mu_2\chi_2e^{-\lambda_2 s}-\chi_1\mu_1e^{-\lambda_1 s}\right|\frac{ze^{-z^2}}{\sqrt{\pi s}}C_0dzds = KC_0.
\end{equation}

The Lemma follows from \eqref{A-eq7}, \eqref{A-eq8}, and \eqref{A-eq9}.
\end{proof}

\begin{rk} It follows from Lemma \ref{Mainlem2} that
\begin{equation}\label{unif-bound}
-\underline{M}C_0\leq (\chi_2\lambda_2V_2-\chi_1\lambda_1V_1)(x,u)\leq \overline{M}C_0,\quad \forall x\in\in\R, \ u\in \mathcal{E}_{\tau,\mu}(C_0)
\end{equation}
and
\begin{equation}\label{pointw-bound}
-\underline{M}_{\tau,\mu}\varphi_{\tau,\mu}(x)\leq (\chi_2\lambda_2V_2-\chi_1\lambda_1V_1)(x,u)\leq \overline{M}_{\tau,\mu}\varphi_{\tau,\mu}(x),\quad \forall x\in\in\R, \ u\in \mathcal{E}_{\tau,\mu}(C_0).
\end{equation}
\end{rk}

 \medskip

  Now we are ready to present the proof of Theorem \ref{super-sub-solu-thm}.

  \medskip
\begin{proof}[Proof of Theorem \ref{super-sub-solu-thm}]
 For every $U\in C^{2,1}(\R\times\R_{+})$, let \begin{align}\label{mathcal L}
\mathcal{L}U=&U_{xx}+(c_{\mu}+\frac{d}{dx}(\chi_2V_2-\chi_1 V_1)(\cdot;u))U_{x}+(a+(\chi_2\lambda_2V_2-\chi_1\lambda_1V_1)(\cdot,u))U\nonumber\\
&+(c_{\mu}\tau\frac{d}{dx}(\chi_1V_1-\chi_2V_2)(\cdot,u) -(b+\chi_2\mu_2-\chi_1\mu_1)U)U .
\end{align}
(1)  First, using inequality \eqref{unif-bound}, we have that
\begin{eqnarray}\label{N001}
\mathcal{L}( C_0)&=& (a+(\chi_2\lambda_2V_2-\chi_1\lambda_1V_1)(\cdot,u)+c_{\mu}\tau\frac{d}{dx}(\chi_1V_1-\chi_2V_2)(\cdot,u) -(b+\chi_2\mu_2-\chi_1\mu_1)C_0)C_0\nonumber\\
              & \leq & (a-(b+\chi_2\mu_2-\chi_1\mu_1-\overline{M}-c_{\mu}\tau K)C_0)C_0
\end{eqnarray}
Since $\chi_2\mu_2-\chi_1\mu_1-\overline{M}=-\underline{M}$, then it follows from {\bf (H)} that $b+\chi_2\mu_2-\chi_1\mu_1-\overline{M}-c_{\mu}\tau K$.  Thus taking $\tilde{C}_{0}:=\frac{a}{b+\chi_2\mu_2-\chi_1\mu_1-\overline{M}-c_{\mu}\tau K}$, it follows from inequality \eqref{N001} that for every $C_{0}\geq \tilde{C}_0$, we have that
$$
\mathcal{L}(C_{0})\leq 0.
$$ Hence, for every $C_{0}\geq \tilde{C}_0$, we have that $U(x,t)=C_0$ is a super-solution of \eqref{ODE2} on $\R\times\R$.\\
(2) It follows from Lemma \ref{Mainlem3}, and inequality \eqref{pointw-bound}, and \eqref{Eq01_Th1} that
\begin{align*}
 \mathcal{L}(\varphi_{\tau,\mu}) =& \varphi''_{\tau,\mu}(x)+(c_{\mu}+\frac{d}{dx}(\chi_2V_2-\chi_1V_1)(\cdot;u))\varphi_{\tau,\mu}'(x)+(a+(\chi_2\lambda_2V_2-\chi_1\lambda_1 V_1)(\cdot;u))\varphi_{\tau,\mu}\nonumber\\
  & + (\tau c_\mu \frac{d}{dx}(\chi_1V_1-\chi_2V_2)(\cdot,u)-(b+\chi_2\mu_2-\chi_1\mu_1)\varphi_{\tau,\mu})\varphi_{\tau,\mu}\nonumber\\
  =& \underbrace{(\varphi''_{\tau,\mu}+c_{\mu}\varphi'_{\tau,\mu}+a\varphi_{\tau,\mu})}_{=0} +\frac{d}{dx}(\chi_2V_2-\chi_1V_1)(\cdot;u)\varphi_{\tau,\mu}'\nonumber\\
  & +((\chi_2\lambda_2V_2-\chi_1\lambda_1 V_1)(\cdot;u)+\tau c_\mu \frac{d}{dx}(\chi_1V_1-\chi_2V_2)(\cdot,u)-(b+\chi_2\mu_2-\chi_1\mu_1)\varphi_{\tau,\mu})\varphi_{\tau,\mu}\nonumber\\
 = & \left((\tau c_{\mu}+\mu) \frac{d}{dx}(\chi_1V_1-\chi_2V_2)(\cdot;u)+(\chi_2\lambda_2V_2-\chi_1\lambda_1 V_1)(\cdot;u)\right)\varphi_{\tau,\mu}\nonumber\\
  & -(b+\chi_2\mu_2-\chi_1\mu_1)\varphi_{\tau,\mu}^{2}\nonumber\\
  \leq & - \left(b+\chi_2\mu_2-\chi_1\mu_1-(\tau c_{\mu}+\mu)K_{\tau,\mu}-\overline{M}_{\tau,\mu}\right)\varphi_{\tau,\mu}^{2}\leq 0.
\end{align*}
Hence $U(x,t)=\varphi_{\tau,\mu}(x)$ is also a super-solution of \eqref{ODE2} on $\R\times\R$.

(3)  Let $C_0>0$ and $O=(\underline{a}_{\mu,\tilde{\mu},d},\infty)$. Then for $x\in O$, $U_{\tau,\mu,C_0}^-(x)>0$.
 For $x\in O$, it follows from Lemma \ref{Mainlem3} , and inequality \eqref{pointw-bound}, and \eqref{Eq01_Th1} that
\begin{align*}
\mathcal{L}(U_{\tau,\mu,C_0}^{-}
) = & \mu^2\varphi_{\tau,\mu}-\tilde{\mu}^2d\varphi_{\tau,\tilde{\mu}} +(c_{\mu}+\frac{d}{dx}(\chi_2V_2-\chi_1V_1)(\cdot;u))(-\mu\varphi_{\tau,\mu}+d\tilde{\mu}\varphi_{\tau,\tilde{\mu}})\nonumber\\
& +(a+(\chi_2\lambda_2V_2-\chi_1\lambda_1V_1)(\cdot,u)+c_{\mu}\tau\frac{d}{dx}(\chi_1V_1-\chi_2V_2)(\cdot;u))U_{\tau,\mu,C_0}^{-} \nonumber\\
& -(b+\chi_2\mu_2-\chi_1\mu_1)\left( U_{\tau,\mu,C_0}^{-}\right)^2\nonumber\\
=& \underbrace{(\mu^{2}-\mu c_{\mu}+a)}_{=0}\varphi_{\tau,\mu} +d\underbrace{(\tilde{\mu}c_{\mu}-\tilde{\mu}^{2}-a)}_{A_{0}}\varphi_{\tau,\tilde{\mu}}\nonumber\\
&  +\frac{d}{dx}(\chi_1V_1-\chi_2V_2)(\cdot;u)((c_{\mu}\tau+\mu)\varphi_{\tau,\mu}-d(c_{\mu}\tau+\tilde{\mu})\varphi_{\tau,\tilde{\mu}})\nonumber\\
&  + ( (\chi_2\lambda_2V_2-\chi_1\lambda_1V_1)(\cdot,u) -(b+\chi_2\mu_2-\chi_1\mu_1)U_{\tau,\mu,C_0}^{-})U_{\tau,\mu,C_0}^{-}\nonumber\\
 \geq & dA_{0}\varphi_{\tau,\tilde{\mu}} -K_{\tau,\mu}((\tau c_\mu+\mu)\varphi_{\tau,\mu}+d(\tau c_\mu+\tilde{\mu})\varphi_{\tau,\tilde{\mu}})\varphi_{\tau,\mu}\nonumber\\
&  -\left(\underline{M}_{\tau,\mu}\varphi_{\tau,\mu}+(b+\chi_2\mu_2-\chi_1\mu_1)U^{-}_{\tau,\mu,C_0}\right)U^{-}_{\tau,\mu,C_0}\nonumber\\
=& dA_0\varphi_{\tau,\tilde{\mu}}-\underbrace{\left( (c_{\mu}\tau+\mu)K_{\tau,\mu}+\underline{M}_{\tau,\mu}+b+\chi_2\mu_2-\chi_1\mu_1\right)}_{A_1}\varphi_{\tau,\mu}^2 \nonumber\\
&  +d\left(2(b+\chi_2\mu_2-\chi_1\mu_1)-(\tau c_{\mu}+\tilde{\mu})K_{\tau,\mu} \right)\varphi_{\tau,\mu}\varphi_{\tau,\tilde{\mu}}-d^2(b+\chi_2\mu_2-\chi_1\mu_1)\varphi^2_{\tau,\tilde{\mu}}.
\end{align*}
Note that  $U_{\tau,\mu,C_0}^{-}(x)>0$ is equivalent to $\varphi_{\tau,\mu}(x)>d\varphi_{\tau,\tilde{\mu}}(x)$, which is again equivalent to
$$
d(b+\chi_2\mu_2-\chi_1\mu_1)\varphi_{\tau,\mu}(x)\varphi_{\tau,\tilde{\mu}}(x)>d^{2}(b+\chi_2\mu_2-\chi_1\mu_1)\varphi^2_{\tau,\tilde{\mu}}(x).
$$
Since $A_{1}>0$, thus for $x\in O$, we have
\begin{eqnarray*}
\mathcal{L}U_{\mu}^{-}(x) & \geq &  dA_{0}\varphi_{\tau,\tilde{\mu}}(x) -A_{1}\varphi_{\tau,\mu}^2(x)\nonumber\\
& & +d\underbrace{\left(b+\chi_2\mu_2-\chi_1\mu_1-(\tau c_{\mu}+\tilde{\mu})K_{\tau,\mu}\right)}_{A_{2}}\varphi_{\tau,\mu}(x)\varphi_{\tau,\tilde{\mu}}(x)\nonumber\\
& =& A_{1}\left(\frac{dA_{0}}{A_{1}}e^{(2\mu-\tilde{\mu})x}-1\right)\varphi_{\tau,\mu}^{2}(x) +dA_{2}\varphi_{\tau,\mu}(x)\varphi_{\tau,\tilde{\mu}}(x).
\end{eqnarray*}
Note also that, by \eqref{Eq01_Th1},
\begin{eqnarray}\label{Eq1 of Th2}
A_{2}= \left(b+\chi_2\mu_2-\chi_1\mu_1-(\tau c_{\mu}+\mu)K_{\tau,\mu}\right) -(\tilde{\mu}-\mu)K_{\tau,\mu}\ge 0
\end{eqnarray}
whenever $(\tilde{\mu}-\mu)K_{\tau,\mu}\leq b+\chi_2\mu_2-\chi_1\mu_1-(\tau c_{\mu}+\mu)K_{\tau,\mu}$.
Observe that
$$
A_{0}=\frac{(\tilde{\mu}-\mu)(a-\mu\tilde{\mu})}{\mu}>0,\quad \forall\ 0<\mu<\tilde{\mu}<\sqrt{a}.
$$
 Furthermore, we have that $U_{\tau,\mu,C_0}^{-}(x)>0$ implies that $x>0$ for $d\geq \max\{1,C_0^{\frac{\mu-\tilde{\mu}}{\mu}}\}$. Thus, for every $ d\geq d_{0}:= \max\{1, \frac{A_{1}}{A_{0}}, C_0^{\frac{\mu-\tilde{\mu}}{\mu}}\}$, we have that
\begin{equation}\label{E1}
\mathcal{L}U_{\tau,\mu,C_0}^{-}(x) > 0
\end{equation}
whenever $x\in O$ and $\mu<\tilde{\mu}< \min\{\sqrt{a},\sqrt{\frac{\lambda_1+\tau a}{(1-\tau)_{+}}},\sqrt{\frac{\lambda_2+\tau a}{(1-\tau)_{+}}},2\mu,\mu+\frac{b+\chi_2\mu_2-\chi_1\mu_1-(\tau c_{\mu}+\mu)K_{\tau,\mu}}{1+K_{\tau,\mu}}\}$. Hence $U(x,t)=U_{\tau,\mu,C_0}^-(x)$ is a sub-solution of \eqref{ODE2} on $(\underline{a}_{\mu,\tilde{\mu},d},\infty)\times\R$.

(4) Since $\overline{M}-\underline{M}=\chi_2\mu_2-\chi_1\mu_1$,  thus, it follows from \eqref{Eq01_Th1} that
\begin{align}
a -\underline{M}\tilde{C}_0-\tau c_{\mu}K\tilde{C}_0&=a\left( 1-\frac{\underline{M}+\tau c_{\mu}K}{b+\chi_2\mu_2-\chi_1\mu_1-\overline{M}-\tau c_{\mu}K}\right)\nonumber\\
&=\frac{a(b+2\chi_2\mu_2-2\chi_1\mu_1-2\overline{M}-2\tau c_{\mu}K)}{b+\chi_2\mu_2-\chi_1\mu_1-\overline{M}-\tau c_{\mu}K}>0.
\end{align}
Hence, for $0<\delta\ll 1$, we have that
\begin{small}
\begin{align*}
\mathcal{L}(U_{\tau,\mu,\tilde{C}_0})&=(a+(\chi_2\lambda_2V_2-\chi_1\lambda_1V_1)(\cdot,u) +c_{\mu}\tau\frac{d}{dx}(\chi_1V_1-\chi_2V_2)(\cdot;u))U_{\tau,\mu,\tilde{C}_0}^-(x_\delta)\nonumber\\
& -(b+\chi_2\mu_2-\chi_1\mu_1)\left[U_{\tau,\mu,\tilde{C}_0}^-(x_\delta)\right]^2\nonumber\\
& \geq (a -\underline{M}\tilde{C}_0-\tau c_{\mu}K\tilde{C}_0 -(b+\chi_2\mu_2-\chi_1\mu_1)U_{\tau,\mu,\tilde{C}_0}^-(x_\delta))U_{\tau,\mu,\tilde{C}_0}^-(x_\delta)
\end{align*}
\end{small}
where $x_\delta=\underline{a}_{\mu,\tilde{\mu},d}+\delta$. This implies that $U(x,t)=U_{\tau,\mu,\tilde{C}_0}^-(x_\delta)$ is
a sub-solution of \eqref{ODE2} on $\R\times\R$.
\end{proof}

\section{Traveling wave solutions}

In this section we study the existence and nonexistence  of traveling wave solutions of \eqref{Main-eq1} connecting $(\frac{a}{b},\frac{a\mu_1}{b\lambda_1},\frac{a\mu_2}{b\lambda_2})$ and
$(0,0,0)$, and prove Theorems C and D.

\subsection{Proof of Theorem  C}

In this subsection, we  prove Theorem C. To this end,  we first prove  the following important result.

\begin{tm}\label{existence-tv-thm}
 Assume (H).  Then \eqref{Main-eq1} has a traveling wave solution
$(u(x,t),v_1(x,t),v_2(x,t))=(U(x-c_\mu t),V_1(x-c_\mu t),V_2(x-c_{\mu}t))$ satisfying
$$
\lim_{x\to-\infty}U(x)=\frac{a}{b} \quad \text{and}\quad
\lim_{x\to\infty}\frac{U(x)}{e^{- \mu x}}=1
$$
where $c_{\mu}=(\mu+\frac{a}{\mu})$.
\end{tm}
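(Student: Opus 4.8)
\emph{Proof strategy.}
A solution $(U,V_1,V_2)(x-c_\mu t)$ of \eqref{Main-eq1} connecting the two equilibria is exactly a stationary solution of \eqref{Main-eq2} with $c=c_\mu$, so the plan is to produce a nonnegative stationary solution $U$ of \eqref{Main-eq2} with $\lim_{x\to-\infty}U(x)=a/b$ and $U(x)e^{\mu x}\to1$ as $x\to\infty$, and then set $V_i:=V_i(\cdot;U)$ as in \eqref{Inverse of u} (the limits $V_i(-\infty)=a\mu_i/(b\lambda_i)$ and $V_i(\infty)=0$ then follow from those of $U$ by dominated convergence in \eqref{Inverse of u}). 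I would obtain $U$ as a fixed point of the long‑time solution map of the ``frozen'' equation \eqref{ODE2} on $\mathcal E:=\mathcal E_{\tau,\mu}(\tilde C_0)$. Fix $\tilde C_0,\tilde\mu,d$ as in Theorem~\ref{super-sub-solu-thm} and write $U^{+}:=U^{+}_{\tau,\mu,\tilde C_0}=\min\{\tilde C_0,\varphi_{\tau,\mu}\}$ and $U^{-}:=U^{-}_{\tau,\mu,\tilde C_0}$. By parts (1)--(2) of Theorem~\ref{super-sub-solu-thm}, $U^{+}$ is a supersolution of \eqref{ODE2} for every $u\in\mathcal E$ (a minimum of two supersolutions); by part (3), together with $U^{-}\equiv0$ on $(-\infty,\underline a_{\mu,\tilde\mu,d})$, $U^{-}$ is a generalized subsolution on $\R\times\R$. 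Since $U^{-}$ vanishes near $-\infty$ it cannot by itself pin $U$ at $a/b$ there, so I would also take the constant subsolution $\varepsilon_{*}:=U^{-}_{\tau,\mu,\tilde C_0}(x_\delta)>0$ of part (4) and, by the same computation (which uses only the uniform bounds \eqref{unif-bound} and the inequality $a-\underline M\tilde C_0-c_\mu\tau K\tilde C_0>0$ furnished by \eqref{Eq01_Th1}), build a subsolution $\Psi$ of \eqref{ODE2}, valid for all $u\in\mathcal E$, that equals $\varepsilon_{*}$ on a left half‑line, decreases convexly to $0$ over a long interval, and then stays $0$ --- the interval chosen long enough that the small curvature it creates is dominated by the positive reaction, and so that $\Psi\le U^{+}$ everywhere. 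Then $\underline W:=\max\{U^{-},\Psi\}$ is a subsolution of \eqref{ODE2} on $\R\times\R$ with $\underline W\le U^{+}$ and $\underline W\ge\varepsilon_{*}$ on a left half‑line.

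\emph{Fixed point.}
For $u\in\mathcal E$ let $w(\cdot,t;u)$ solve \eqref{ODE2} with coefficients frozen at $u$ through $V_i(\cdot;u)$ and $w(\cdot,0;u)=U^{+}$. As $U^{+}$ is a supersolution, $t\mapsto w(\cdot,t;u)$ is non‑increasing; by comparison it stays between $\underline W$ and $U^{+}$; hence $w(\cdot,t;u)\downarrow\Phi(u)$ as $t\to\infty$, with $\Phi(u)$ a stationary solution of the frozen equation satisfying $\underline W\le\Phi(u)\le U^{+}$, so $\Phi(u)\in\mathcal E$. Parabolic and then elliptic Schauder estimates give uniform interior $C^{2+\alpha}$ bounds on $\Phi(\mathcal E)$, and, with the continuous dependence of $V_i(\cdot;u)$ on $u$, continuity of $\Phi$ in the local‑uniform topology; $\Phi(\mathcal E)$ being precompact there, the Schauder--Tychonoff theorem (applied on $\overline{\mathrm{conv}}\,\Phi(\mathcal E)\subset\mathcal E$) yields a fixed point $U$. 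Then $(U,V_1(\cdot;U),V_2(\cdot;U))$ is a stationary solution of \eqref{Main-eq2} with $c=c_\mu$.

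\emph{Asymptotics.}
As $x\to\infty$, $U^{-}(x)\le U(x)\le\varphi_{\tau,\mu}(x)=e^{-\mu x}$ gives $1-de^{-(\tilde\mu-\mu)x}\le U(x)e^{\mu x}\le1$, hence $U(x)\to0$ and $U(x)e^{\mu x}\to1$. As $x\to-\infty$, $\underline W\ge\varepsilon_{*}$ on a left half‑line gives $\liminf_{x\to-\infty}U(x)\ge\varepsilon_{*}>0$; for any $x_n\to-\infty$ the translates $U(\cdot+x_n)$ are bounded in $C^{2+\alpha}_{\rm loc}$ and, along a subsequence, converge in $C^{2}_{\rm loc}$ to a bounded stationary solution $U_\infty$ of \eqref{Main-eq2} (by translation invariance of \eqref{Main-eq2} and $V_i(\cdot+x_n;U)\to V_i(\cdot;U_\infty)$) with $\inf_{x}U_\infty(x)\ge\varepsilon_{*}>0$. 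Using $\overline M-\underline M=\chi_2\mu_2-\chi_1\mu_1$, the inequality \eqref{Eq01_Th1} in {\bf (H)} rearranges to $b>2(\underline M+c_\mu\tau K)$, which is precisely the hypothesis of Theorem~B with $c=c_\mu$; applying Theorem~B to the time‑independent solution $U_\infty$ forces $U_\infty\equiv a/b$. Since every subsequential limit equals $a/b$, $\lim_{x\to-\infty}U(x)=a/b$.

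\emph{Main obstacle.}
The delicate points are: (i) constructing $\Psi$ --- the constant $\varepsilon_{*}$ from Theorem~\ref{super-sub-solu-thm}(4) does not decay, a naive cutoff (a minimum with a decaying profile) fails to be a subsolution, so one decays $\varepsilon_{*}$ convexly over a long interval, tuning its length against the uniform constants (so the induced curvature is absorbed by the strictly positive reaction $a-\underline M\tilde C_0-c_\mu\tau K\tilde C_0>0$) while keeping $\Psi\le e^{-\mu x}$; and (ii) showing that the long‑time‑limit map $\Phi$ is continuous, which forces one to work in the local‑uniform topology and to combine parabolic regularity with continuous dependence of $V_i(\cdot;u)$ on $u$. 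The conceptual core is the reduction of the left‑hand limit to the stability Theorem~B; this is why {\bf (H)} carries the factor‑of‑two inequality \eqref{Eq01_Th1}.
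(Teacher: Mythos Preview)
Your overall architecture matches the paper's exactly: freeze $u\in\mathcal E_{\tau,\mu}(\tilde C_0)$, run \eqref{ODE2} from $U^{+}$, use the monotone limit to define $\Phi(u)$, apply Schauder's fixed point theorem in the local–uniform topology, then identify the left limit via translates and Theorem~B. The only substantive divergence is how you obtain the uniform positive lower bound on a left half–line, and there your sketch has a real gap.

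You propose to build a global subsolution $\Psi$ that equals $\varepsilon_*$ on a left half–line and ``decreases convexly to $0$ over a long interval'', arguing that the curvature can be made small. But the operator in \eqref{ODE2} carries a first–order drift $(c_\mu+(\chi_2V_2-\chi_1V_1)_x)\Psi_x$ with coefficient bounded below only by $c_\mu-K\tilde C_0$, which need not be nonpositive. On a transition interval of length $L$ one has $\Psi_x<0$ of size $\sim\varepsilon_*/L$, so this drift contributes a negative term of order $\varepsilon_*/L$; near the right end where $\Psi\to0$ the reaction term $(a-\underline M\tilde C_0-c_\mu\tau K\tilde C_0)\Psi$ is $o(1)$ and cannot absorb it. Making $L$ large helps with the curvature $\Psi_{xx}\sim\varepsilon_*/L^2$ but not with the drift, so the subsolution inequality fails near the right endpoint regardless of $L$. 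Replacing the compactly supported tail by an exponential one does not help either: a rate $\alpha<\mu$ makes $\alpha^2-c_\mu\alpha+a>0$ (good), but then $\Psi\not\le U^{+}=e^{-\mu x}$ for large $x$, so $\underline W\le U^{+}$ fails; a rate $\alpha\ge\mu$ kills the needed sign.

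The paper sidesteps this entirely (Lemma~\ref{lm2}): it never builds a single global subsolution. It first compares on the half–line $[\underline a_{\mu,\tilde\mu,d},\infty)$ with $U^{-}$ (using $U^{-}(\underline a_{\mu,\tilde\mu,d})=0$ as boundary value), obtaining $U(x,t;U^{+},u)\ge U^{-}(x)$ there; this supplies the boundary datum $U(x_\delta,t;U^{+},u)\ge U^{-}(x_\delta)=\varepsilon_*$. It then compares on the other half–line $(-\infty,x_\delta]$ against the \emph{constant} subsolution $\varepsilon_*$ of Theorem~\ref{super-sub-solu-thm}(4), using that boundary datum together with $U^{+}(x)\ge U^{+}(x_\delta)\ge\varepsilon_*$ for $x\le x_\delta$. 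No transition profile is needed, and the two inequalities together give exactly the piecewise lower barrier $U^{-}_{\mu,\delta}$ in Remark~\ref{Remark-lower-bound-for -solution}. With this modification your proof goes through and coincides with the paper's.
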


%Our key idea to prove the above theorem is to prove that, for any  $0<\mu<\min\{1,  \sqrt{\frac{\lambda_1}{a}}, \sqrt{\frac{\lambda_2}{a}}\}$ , and  $M+N+\chi_1\mu_1<b+\chi_2\mu_2$ satisfy \eqref{sup-sub-solu-eq}, there is $u^*(\cdot)\in\mathcal{E}_{\mu}$ such that  $(U(\cdot),V_1(\cdot),V_2(\cdot))=(u^*(\cdot),V_1(\cdot;u^*),V_2(\cdot;u^*))$ is a stationary solution of \eqref{Main-eq2} with $c=c_\mu$, where  $V_{i}(\cdot;u^*)$ is given by \eqref{Inverse of u}, and $u^*(-\infty)=\frac{a}{b}$ and $u^*(\infty)=0$, which implies that $(u(x,t),v_1(x,t),v_2(x,t))=(u^*(x-c_\mu t),V_1(x-c_\mu t;u^*),V_2(x-c_\mu t;u^*))$ is a traveling wave solution of \eqref{main-eq1} connecting $(\frac{a}{b},\frac{a\mu_1}{b\lambda_1},\frac{a\mu_2}{\lambda_2})$ and $(0,0,0)$ with speed $c=c_\mu$.

In order to prove Theorem \ref{existence-tv-thm}, we first prove some lemmas. These Lemmas extend some of the results established in \cite{SaSh2}, so some details might be omitted in their proofs. The reader is referred to the proofs of Lemmas 3.2, 3.3, 3.5 and 3.6 in \cite{SaSh2} for more details.

 In the remaining part of this subsection we shall suppose that {\bf (H)} holds and $\tilde \mu$ is fixed, where $\tilde \mu$ satisfies
   $$\mu<\tilde{\mu}<\min\{\sqrt{a},\sqrt{\frac{\lambda_1+
   \tau a}{(1-\tau)_{+}}},\sqrt{\frac{\lambda_1+\tau a}{(1-\tau)_+}},2\mu, \mu+\frac{b+\chi_2\mu_2-\chi_1\mu_1-(\tau c_{\mu}+\mu)_{\tau,\mu}}{1+K_{\tau,\mu}}\}.
    $$
 Furthermore, we choose $\tilde{C}_{0}=\tilde{C}_{0}(\tau,\chi_1,\mu_1,\lambda_1,\chi_2,\mu_2,\lambda_2,\mu)$ and  $d=d_0(\tau,\chi_1,\mu_1,\lambda_1,\chi_2,\mu_2,\lambda_2,\mu)$ to be the constants given by Theorem \ref{super-sub-solu-thm} and to be fixed and set $U^{+}_{\tau,\mu,\tilde{C}_0}:=U^{+}_{\mu}$ and $U^{-}_{\tau,\mu,\tilde{C}_0}:=U^{-}_{\mu}$. Fix $u\in\mathcal{E}_{\tau,\mu}(\tilde{C}_{0})$. For given $u_0\in C_{\rm unif}^b(\R)$, let
  $U(x,t;u_0,u)$ be the solution of \eqref{ODE2} with
$U(x,0;u_0,u)=u_0(x)$. By the arguments in the proofs of Theorem 1.1 and Theorem 1.5 in \cite{SaSh1}, we have $U(x,t; U_{\mu}^+,u)$ exists for all $t>0$ and
${ U(\cdot,\cdot;{ U_{\mu}^+},u)}\in C([0,\infty),C^{b}_{\rm unif}(\R))\cap C^{1}((0\ ,\ \infty),C^{b}_{\rm unif}(\R))\cap C^{2,1}(\R\times(0,\ \infty))$ satisfying
\begin{equation}
U(\cdot,\cdot; U_{\mu}^+,u), U_{x}(\cdot,\cdot; U_{\mu}^+,u),U_{xx}(\cdot,t; U_{\mu}^+,u),U_{t}(\cdot,\cdot; U_{\mu}^+,u)\in  C^{\theta}((0, \infty),C_{\rm unif}^{\nu}(\R))
\end{equation}
for $0<\theta, \nu \ll 1$.

\begin{lem} \label{lm1} Assume (H).
 Then for every $u\in \mathcal{E}_{\tau,\mu}(\tilde{C}_{0})$, the following hold.
\begin{description}
\item[(i)] $0\leq U(\cdot,t; U_{\mu}^+,u)\leq U_{ \mu}^{+}(\cdot)$ for every $t\geq 0.$
\item[(ii)] $U(\cdot,t_{2}; U_{\mu}^+,u)\leq U(\cdot,t_{1}; U_{\mu}^+, u) $ for every $0\leq t_{1}\leq t_{2}$.
\end{description}
\end{lem}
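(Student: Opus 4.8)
The plan is to deduce both parts from the comparison principle for the semilinear parabolic equation \eqref{ODE2}. Since $u\in\mathcal{E}_{\tau,\mu}(\tilde{C}_{0})$ is fixed, the coefficients of \eqref{ODE2} built from $V_i(\cdot;u)$ and $\tfrac{d}{dx}V_i(\cdot;u)$ depend on $x$ only and are globally bounded by \eqref{Estimates on Inverse of u} and \eqref{Estimates on Inverse of V}, while the reaction part is locally Lipschitz in $U$. Hence, working in $C^{b}_{\rm unif}(\R)$ exactly as in \cite{SaSh1,SaSh2}, ordered sub-/super-solutions of \eqref{ODE2} remain ordered on any time interval on which the relevant solutions exist, and $U(\cdot,t;u_0,u)$ depends monotonically on $u_0$; the global existence and regularity of $U(\cdot,\cdot;U_\mu^+,u)$ recalled just before the lemma guarantee we may apply these facts for every $t\geq 0$.

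For (i), the lower bound is immediate: $U\equiv 0$ solves \eqref{ODE2} (every term carries a factor $U$), while $U(\cdot,0;U_\mu^+,u)=U_\mu^+\geq 0$, so comparison gives $U(\cdot,t;U_\mu^+,u)\geq 0$ for all $t\geq 0$. For the upper bound I would compare against the two \emph{smooth} supersolutions separately, thereby bypassing the corner of $U_\mu^+$ at $x=-\ln(\tilde{C}_0)/\mu$. By Theorem \ref{super-sub-solu-thm}(1) with $C_0=\tilde{C}_0$, the constant $\tilde{C}_0$ is a supersolution of \eqref{ODE2}, and by Theorem \ref{super-sub-solu-thm}(2) so is $\varphi_{\tau,\mu}$. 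Since $U(\cdot,0;U_\mu^+,u)=U_\mu^+\leq\tilde{C}_0$ and $U_\mu^+\leq\varphi_{\tau,\mu}$, two applications of the comparison principle yield $U(\cdot,t;U_\mu^+,u)\leq\tilde{C}_0$ and $U(\cdot,t;U_\mu^+,u)\leq\varphi_{\tau,\mu}(\cdot)$ for all $t\geq 0$, whence $U(\cdot,t;U_\mu^+,u)\leq\min\{\tilde{C}_0,\varphi_{\tau,\mu}\}=U_\mu^+$.

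For (ii), I would exploit that \eqref{ODE2} is autonomous in $t$ (the coefficients depend on $x$ only). By uniqueness of solutions one has the semigroup identity $U(\cdot,t+h;U_\mu^+,u)=U(\cdot,t;U(\cdot,h;U_\mu^+,u),u)$ for all $h,t\geq 0$. By part (i), $U(\cdot,h;U_\mu^+,u)\leq U_\mu^+=U(\cdot,0;U_\mu^+,u)$, so monotone dependence on the initial datum gives $U(\cdot,t;U(\cdot,h;U_\mu^+,u),u)\leq U(\cdot,t;U_\mu^+,u)$, i.e.\ $U(\cdot,t+h;U_\mu^+,u)\leq U(\cdot,t;U_\mu^+,u)$ for every $h\geq 0$. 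Taking $t=t_1$ and $h=t_2-t_1$ gives $U(\cdot,t_2;U_\mu^+,u)\leq U(\cdot,t_1;U_\mu^+,u)$ whenever $0\leq t_1\leq t_2$.

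The only genuinely delicate point — the main, if mild, obstacle — is the validity of the comparison/maximum principle on the whole line with merely bounded (nondecaying) drift and zeroth-order coefficients, which needs a Phragm\'{e}n--Lindel\"{o}f type restriction on the growth of the competing functions at infinity. This is legitimate here because all initial data and all solutions lie in $C^{b}_{\rm unif}(\R)$ and the coefficients are globally bounded by \eqref{Estimates on Inverse of u} and \eqref{Estimates on Inverse of V}; it is precisely the framework of \cite{SaSh1,SaSh2}, so I would invoke those references rather than reprove the principle. The secondary subtlety that $U_\mu^+$ is only Lipschitz, not $C^{2}$, is removed by the two-supersolution argument used above.
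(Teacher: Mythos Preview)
Your proof is correct and follows essentially the same route as the paper: the paper also compares separately against the smooth supersolutions $\tilde{C}_0$ and $\varphi_{\tau,\mu}$ (via Theorem \ref{super-sub-solu-thm}(1) and (2)) to obtain $U\leq U_\mu^+$, and then uses the semigroup identity $U(\cdot,t_2;U_\mu^+,u)=U(\cdot,t_1;U(\cdot,t_2-t_1;U_\mu^+,u),u)$ together with part (i) and comparison to get monotonicity in $t$. Your added remarks on the Phragm\'{e}n--Lindel\"{o}f issue and the corner of $U_\mu^+$ are sound refinements but do not change the argument.
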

\begin{proof}
(i)   Note that $0\leq U^{+}_{\mu}(\cdot)\leq \tilde{C}_{0}$. Then by
comparison principle for parabolic equations and Theorem \ref{super-sub-solu-thm}(1), we have
\begin{equation*}
0\leq U(x,t; U_{\mu}^+,u)\leq \tilde{C}_{0} \quad \forall\ x\in\R,\ t\geq 0.
\end{equation*}

Similarly, note that $0\leq U_{\mu}^+(x)\le\varphi_{\mu}(x)$.  Then by  comparison principle for parabolic equations and
Theorem \ref{super-sub-solu-thm}(2)  again, we have
\begin{equation*}
U(x,t;U_{\mu}^+,u)\leq \varphi_{\mu}(x) \ \quad \forall\ x\in\R,\ t\geq 0.
\end{equation*}
Thus $U(\cdot,t;U_{\mu}^+,u)\leq U^{+}_{\mu}$. This completes the proof of (i).

(ii)  For $0\leq t_{1}\leq t_{2}$, since
$$
U(\cdot,t_{2};U_{\mu}^+,u)=U(\cdot,t_{1} ;U(\cdot,t_{2}-t_{1};U_{\mu}^+,u),u)
$$
and by (i), $U(\cdot,t_{2}-t_{1};U_{\mu}^+,u)\leq U^{+}_{\mu} $, (ii) follows from comparison principle for parabolic equations.
\end{proof}

Let us define $U(x; u)$ to be
\begin{equation}
\label{U-eq}
{
U(x; u)=\lim_{t\to\infty}U(x,t; U^{+}_{\mu}, u)=\inf_{t>0}U(x,t; U^{+}_{\mu}, u).
}
\end{equation}
 By the a priori estimates for parabolic equations, the limit in \eqref{U-eq} is uniform in $x$ in compact subsets of $\R$
and $U(\cdot;u)\in C_{\rm unif}^b(\R)$.
Next we prove that the function $u\in\mathcal{E}_{\tau,\mu}(\tilde{C}_{0})\to U(\cdot;u)\in\mathcal{E}_{\tau,\mu}(\tilde{C}_{0})$.

\begin{lem}\label{lm2}
Assume (H).  Then,
\begin{equation}
U(x;u)\geq\begin{cases}  U^{-}_{\mu}(x),\quad x\ge \underline{a}_{\mu,\tilde{\mu},d}\cr
U_{\mu}^-(x_\delta),\quad x\le x_\delta=\underline{a}_{\mu,\tilde{\mu},d}+\delta
\end{cases}
\end{equation}\label{Eq2 of Th2}
for every $u\in\mathcal{E}_{\tau,\mu}(\tilde{C}_{0})$,  $x\in\R $, and $0<\delta\ll 1$.
\end{lem}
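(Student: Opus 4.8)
\textbf{Proof plan for Lemma \ref{lm2}.}

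The plan is to exploit the monotone dynamics established in Lemma \ref{lm1} together with the sub-solution properties from parts (3) and (4) of Theorem \ref{super-sub-solu-thm}, via the comparison principle. The key observation is that although $U^{+}_{\mu}$ lies \emph{above} the sub-solutions $U^{-}_{\mu}$ and $U^{-}_{\mu}(x_{\delta})$ at time $t=0$ (this uses the defining inequality $0\le U^{-}_{\tau,\mu,C_0}\le U^{+}_{\tau,\mu,C_0}$ together with the fact that $U^{-}_{\mu}(x_{\delta})$ is a shifted copy which is still dominated by $U^{+}_{\mu}$ for $0<\delta\ll1$), the solution $U(x,t;U^{+}_{\mu},u)$ is decreasing in $t$ and converges to $U(x;u)$, so we need a time-independent lower barrier that survives passing to the limit $t\to\infty$.

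First I would handle the easier regime $x\le x_{\delta}$. By Theorem \ref{super-sub-solu-thm}(4), for $0<\delta\ll1$ the constant-in-time function $\underline{W}(x,t):=U^{-}_{\tau,\mu,\tilde C_0}(x_{\delta})$ — which, note, does not depend on $x$ — is a genuine sub-solution of \eqref{ODE2} on all of $\R\times\R$. Since at $t=0$ we have $U^{+}_{\mu}(x)\ge U^{-}_{\mu}(x_{\delta})$ for every $x\in\R$ (because $U^{-}_{\mu}$ is nondecreasing on $[\underline{a}_{\mu,\tilde\mu,d},\infty)$ hence its value at $x_{\delta}$ is at most its value at any larger point, and it is bounded by $U^{+}_{\mu}$ which is nonincreasing), the comparison principle for parabolic equations gives $U(x,t;U^{+}_{\mu},u)\ge U^{-}_{\mu}(x_{\delta})$ for all $x\in\R$, $t\ge0$. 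Letting $t\to\infty$ yields $U(x;u)\ge U^{-}_{\mu}(x_{\delta})$ for all $x$, in particular for $x\le x_{\delta}$.

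Next, for $x\ge \underline{a}_{\mu,\tilde\mu,d}$ I would use Theorem \ref{super-sub-solu-thm}(3): on the half-line $O=(\underline{a}_{\mu,\tilde\mu,d},\infty)$ the function $U^{-}_{\mu}(x)$ is a sub-solution of \eqref{ODE2}. The subtlety is that $O$ is not all of $\R$, so one must control the boundary. The natural device is to compare on the space-time domain $O\times(0,\infty)$: on the lateral boundary $x=\underline{a}_{\mu,\tilde\mu,d}$ we have $U^{-}_{\mu}=0\le U(x,t;U^{+}_{\mu},u)$ by Lemma \ref{lm1}(i) (nonnegativity), and at the ``far'' boundary $x\to+\infty$ both functions tend to $0$ with $U^{-}_{\mu}\le\varphi_{\tau,\mu}$ controlling the decay, while at $t=0$ we have $U^{-}_{\mu}(x)\le U^{+}_{\mu}(x)$. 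Then a comparison-principle argument on an unbounded domain — justified by the exponential decay bound $U\le\varphi_{\tau,\mu}$ from Lemma \ref{lm1}(i), which provides the growth control needed for uniqueness/comparison on $\R$ — gives $U(x,t;U^{+}_{\mu},u)\ge U^{-}_{\mu}(x)$ for all $x\ge\underline{a}_{\mu,\tilde\mu,d}$ and $t\ge0$. Passing to the limit $t\to\infty$ using \eqref{U-eq} gives the first case. Since this is a routine extension of the corresponding argument in \cite{SaSh2} (Lemmas 3.5--3.6), I would reference that and only highlight the modifications forced by the presence of the two chemical fields $V_1,V_2$ and the $\tau$-dependent drift terms.

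The main obstacle is the boundary/unboundedness issue in the second case: one must ensure the comparison principle applies on the unbounded set $O\times(0,\infty)$ with a sub-solution that vanishes on one part of the parabolic boundary, and one must verify the lateral boundary inequality at $x=\underline{a}_{\mu,\tilde\mu,d}$ is consistent with the fact that part (4) only gives a sub-solution on the slightly larger shifted set. Concretely, the clean way around this is exactly the two-case split in the statement: use the $x_{\delta}$-shifted sub-solution to cover a left neighborhood of $\underline{a}_{\mu,\tilde\mu,d}$ and the unshifted $U^{-}_{\mu}$ to the right, the overlap on $[\,\underline{a}_{\mu,\tilde\mu,d},x_{\delta}\,]$ being harmless since $U^{-}_{\mu}$ is tiny there. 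I expect no genuinely new difficulty beyond bookkeeping, so the write-up would be short, leaning on Theorem \ref{super-sub-solu-thm} and the monotonicity in Lemma \ref{lm1}.
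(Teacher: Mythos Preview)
Your overall strategy---use Theorem \ref{super-sub-solu-thm}(3) and (4) together with comparison, then pass to the limit $t\to\infty$---matches the paper's, and your treatment of the right region $x\ge\underline{a}_{\mu,\tilde\mu,d}$ is essentially what the paper does. However, there is a genuine gap in your first step (the region $x\le x_\delta$).

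You assert that $U_\mu^+(x)\ge U_\mu^-(x_\delta)$ for \emph{every} $x\in\R$, and then apply comparison on all of $\R\times(0,\infty)$. This inequality is false: $U_\mu^+(x)=\min\{\tilde C_0,e^{-\mu x}\}\to 0$ as $x\to+\infty$, whereas $U_\mu^-(x_\delta)>0$ is a fixed positive constant. (Your supporting claim that $U_\mu^-$ is nondecreasing on $[\underline{a}_{\mu,\tilde\mu,d},\infty)$ is also not correct: $U_\mu^-(x)=e^{-\mu x}-de^{-\tilde\mu x}$ increases from $0$ at $\underline{a}_{\mu,\tilde\mu,d}$ to a positive maximum at $\bar a_{\mu,\tilde\mu,d}$ and then decreases back to $0$.) Consequently the comparison principle on the whole line does not apply with this choice of initial ordering.

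The paper fixes this by \emph{reversing the order of the two cases} and using a half-line comparison. First establish $U(x,t;U_\mu^+,u)\ge U_\mu^-(x)$ for $x\ge\underline{a}_{\mu,\tilde\mu,d}$ (exactly as you outline). Then apply comparison on the domain $(-\infty,x_\delta]\times(0,\infty)$: the initial inequality $U_\mu^+(x)\ge U_\mu^-(x_\delta)$ does hold for $x\le x_\delta$ because $U_\mu^+$ is nonincreasing and $U_\mu^+(x_\delta)\ge U_\mu^-(x_\delta)$, while the lateral boundary condition $U(x_\delta,t;U_\mu^+,u)\ge U_\mu^-(x_\delta)$ for all $t\ge0$ is supplied by the first step. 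With this correction your argument goes through.
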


\begin{proof} Let $u\in \mathcal{E}_{\tau,\mu}(\tilde{C}_{0})$ be fixed.  Let $O=(\underline{a}_{\mu,\tilde{\mu},d},\infty)$.
Note that $U_{\mu}^-(\underline{a}_{\mu,\tilde{\mu},d})=0$. By Theorem \ref{super-sub-solu-thm}(3),
$U_{\mu}^-(x)$ is a sub-solution of \eqref{ODE2} on $O\times (0,\infty)$.
Note also that $U_{\mu}^+(x)\ge U_{\mu}^-(x)$ for $x\ge \underline{a}_{\mu,\tilde{\mu},d}$ and $U(\underline{a}_{\mu,\tilde{\mu},d},t;U_{\mu}^+, u)>0$
for all $t\ge 0$. Then by comparison principle for parabolic equations, we have that
$$
U(x,t;U_{\mu}^+,u)\ge U_{\mu}^-(x)\quad \forall \,\, x\ge \underline{a}_{\mu,\tilde{\mu},d},\,\, t\ge 0.
$$

Now for any $0<\delta\ll 1$, by Theorem \ref{super-sub-solu-thm}(4), $U(x,t)=U_{\mu}^-(x_\delta)$ is a sub-solution of
\eqref{ODE2} on $\R\times \R$. Note that $U_{\mu}^+(x)\ge U_\mu^-(x_\delta)$ for $x\le x_\delta$ and
$U(x_\delta,t;U_{\mu}^+,u)\ge U_{\mu}^-(x_\delta)$ for $t\ge 0$. Then by comparison principle for parabolic equations again,
$$
U(x,t;U_{\mu}^+,u)\ge U_{\mu}^-(x_\delta)\quad \forall\,\, x\le x_\delta,\, \, t>0.
$$
The lemma then follows.
\end{proof}

\begin{rk}\label{Remark-lower-bound-for -solution}
 It follows from Lemmas \ref{lm1} and \ref{lm2} that if {\bf (H)} holds, then
$$
U_{\mu,\delta}^{-}(\cdot)\leq U(\cdot,t;U_{\mu}^+,u)\leq U^{+}_{\mu}(\cdot)$$
for every $u\in\mathcal{E}_{\tau,\mu}(\tilde{C}_0)$, $t\geq0$ and $0\le \delta\ll 1$, where
$$
U_{\mu,\delta}^-(x)=\begin{cases}  U^{-}_{\mu}(x),\quad x\ge \underline{a}_{\mu,\tilde{\mu},d}+\delta\cr
U_\mu^-(x_\delta),\quad x\le x_\delta=\underline{a}_{\mu,\tilde{\mu},d}+\delta.
\end{cases}
$$
 This implies that $$
U_{\mu,\delta}^{-}(\cdot)\leq U(\cdot;u)\leq U^{+}_{\mu}(\cdot)$$
for every $u\in\mathcal{E}_{\tau,\mu}(\tilde{C}_0)$. Hence  $u\in\mathcal{E}_{\tau,\mu}(\tilde{C}_0)\mapsto U(\cdot;u)\in \mathcal{E}_{\tau,\mu}(\tilde{C}_0).$
\end{rk}

\begin{lem}\label{MainLem02}
Assume (H).
 Then for every $\mathcal{E}_{\tau,\mu}(\tilde{C}_0)$ the associated function $U(\cdot;u)$ satisfied the elliptic equation,
\begin{align}\label{Eq_MainLem02}
0=&U_{xx}+(c_{\mu}+(\chi_2V_2-\chi_1V_1)_{x}(x;u))U_{x}+(a+(\chi_2\lambda_2V_2-\chi_1\lambda_1V_1)(x;u))U\nonumber\\
&+(c\tau(\chi_1V_1-\chi_2V_2)_{x}(x,u)-(b+\chi_2\mu_2-\chi_1\mu_1)U)U, \quad x\in \R,
\end{align}

\end{lem}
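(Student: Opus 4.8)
The plan is to show that the function $U(\cdot;u)$ defined as the decreasing limit in \eqref{U-eq} satisfies the stationary equation \eqref{Eq_MainLem02}, which is obtained from \eqref{ODE2} by setting $U_t=0$ and replacing $c$ with $c_\mu$ (note the $c=c_\mu$ convention in force). The natural strategy is a parabolic-regularity / passage-to-the-limit argument: the functions $U(\cdot,\cdot;U_\mu^+,u)$ solve \eqref{ODE2}, they are monotone nonincreasing in $t$ and uniformly bounded (Lemma \ref{lm1} and Remark \ref{Remark-lower-bound-for -solution}), and the coefficients of \eqref{ODE2} — namely $c_\mu + (\chi_2 V_2 - \chi_1 V_1)_x(\cdot;u)$, $a + (\chi_2\lambda_2 V_2 - \chi_1\lambda_1 V_1)(\cdot;u)$, and $c_\mu\tau(\chi_1 V_1 - \chi_2 V_2)_x(\cdot;u)$ — are fixed (they depend only on the frozen $u$, not on the solution $U$), bounded, and smooth by \eqref{Estimates on Inverse of V}. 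Hence \eqref{ODE2} is a semilinear parabolic equation with nice fixed coefficients, and one can invoke interior Schauder/$L^p$ estimates on parabolic cylinders.

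First I would fix a compact interval $[-R,R]\subset\R$ and a time $t_0>0$. By Lemma \ref{lm1}, $0\le U(\cdot,t;U_\mu^+,u)\le U_\mu^+\le \tilde C_0$ for all $t\ge 0$, so the family $\{U(\cdot,\cdot;U_\mu^+,u)\}$ is uniformly bounded on $\R\times[t_0,\infty)$. Interior parabolic estimates (of the type already quoted in the excerpt from \cite{SaSh1}, giving $U, U_x, U_{xx}, U_t \in C^{\theta}((0,\infty), C^{\nu}_{\rm unif}(\R))$) then give, for any $T>t_0$, a uniform bound on $\|U(\cdot,\cdot;U_\mu^+,u)\|_{C^{2+\nu,1+\nu/2}([-R,R]\times[t_0,T])}$ independent of $T$ — the point being that the equation is autonomous in $t$ and the a priori $L^\infty$ bound is time-independent. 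Then I would take a sequence $t_n\to\infty$; by Arzelà–Ascoli (applied to the $C^{2,1}$-bounded family, extracting a subsequence converging in $C^{2,0}_{\rm loc}$ together with the $t$-derivative converging in $C^0_{\rm loc}$), and using the monotone pointwise convergence $U(x,t_n;U_\mu^+,u)\to U(x;u)$ already established in \eqref{U-eq}, one identifies the limit: $U(\cdot,t_n;U_\mu^+,u)\to U(\cdot;u)$ in $C^2_{\rm loc}(\R)$ and $U_x, U_{xx}$ converge to $U_x(\cdot;u), U_{xx}(\cdot;u)$. The key additional observation is that by monotonicity in $t$, $U_t(\cdot,t;U_\mu^+,u)\le 0$ and $\int_{t_0}^{\infty}\!\!\!\int_{-R}^{R} |U_t|\,dx\,dt = \int_{-R}^{R}\big(U(x,t_0) - \lim_{t}U(x,t)\big)dx < \infty$, which forces (along a suitable subsequence, or using the uniform Hölder bound on $U_t$) $U_t(\cdot,t_n;U_\mu^+,u)\to 0$ uniformly on $[-R,R]$. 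Passing to the limit in \eqref{ODE2} — the coefficients being fixed — then yields \eqref{Eq_MainLem02} on $[-R,R]$, and since $R$ is arbitrary, on all of $\R$.

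The main obstacle I expect is showing rigorously that $U_t\to 0$: monotonicity plus the integrability $\int_{t_0}^\infty\!\int_{-R}^R|U_t|\,dx\,dt<\infty$ only gives $U_t\to 0$ in an integrated sense, so one needs the uniform parabolic Hölder estimate on $U_t$ (which the excerpt has already recorded, citing \cite{SaSh1}) to upgrade this to locally uniform convergence $U_t(\cdot,t_n;U_\mu^+,u)\to 0$; equivalently, one argues that any $C^{2,1}_{\rm loc}$ subsequential limit $\tilde U(x,t)$ of the time-translates $U(x,t+t_n;U_\mu^+,u)$ is both $t$-independent (being equal to $U(x;u)$ by the monotone convergence) and a solution of \eqref{ODE2}, hence $\tilde U_t\equiv 0$ and $\tilde U=U(\cdot;u)$ solves \eqref{Eq_MainLem02}. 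A secondary point to be careful about is that $V_i(\cdot;u)$ and their first two $x$-derivatives are genuinely in $C^b_{\rm unif}(\R)$ — this is exactly \eqref{Estimates on Inverse of V} — so the coefficients are admissible for the Schauder theory; this is routine given the lemmas already proved. Since the excerpt explicitly says details may be omitted and refers to the analogous Lemmas 3.2–3.6 in \cite{SaSh2}, I would present this as a short argument: monotone limit $+$ interior parabolic estimates $+$ time-translation compactness $\Rightarrow$ the limit is a stationary solution.
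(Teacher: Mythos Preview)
Your proposal is correct and follows essentially the same approach as the paper: both argue by taking time-translates $U_n(x,t)=U(x,t+t_n;U_\mu^+,u)$, using uniform parabolic regularity estimates to extract a $C^{2,1}_{\rm loc}$-convergent subsequence, and then observing that the limit $\tilde U$ is simultaneously a solution of \eqref{ODE2} and identically equal to the $t$-independent function $U(\cdot;u)$ (by the monotone convergence \eqref{U-eq}), hence stationary. The only cosmetic difference is that the paper obtains the compactness via semigroup/fractional power space estimates (variation of constants in $X^\beta$), whereas you invoke interior Schauder estimates directly; the logical structure is the same.
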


\begin{proof} The following arguments generalized the arguments used in the proof of Lemma 4.6 in \cite{SaSh2}. Hence we refer to \cite{SaSh2} for the proofs of the estimates stated below.

 Let $\{t_{n}\}_{n\geq 1}$ be an increasing sequence of positive real numbers converging to $\infty$. For every $n\geq 1$, define $U_{n}(x,t)=U(x,t+t_{n}; U_{\mu}^+, u)$ for every $x\in\R, \ t\geq 0$.
For every $n$, $U_{n}$ solves the PDE
\begin{equation*}
\begin{cases}
\partial_{t}U_{n}=\partial_{xx}U_{n}+(c_{\mu}+\partial_x(\chi_2V_2-\chi_1V_1)(\cdot;u))\partial_{x}U_{n}+(a+(\chi_2\lambda_2V_2-\chi_1\lambda_1V_1)(\cdot;u))U_{n}\\
\qquad \qquad +(c\tau(\chi_1V_1-\chi_2V_2)_{x}(\cdot,u)-(b+\chi_2\mu_2-\chi_1\mu_1)U_{n})U_{n}\\
U_{n}(\cdot,0)=U(\cdot,t_{n}; U_{\mu}^+, u).
\end{cases}
\end{equation*}

Let $\{T(t)\}_{t\geq 0}$ be the analytic semigroup on $C^{b}_{\rm unif}(\R)$ generated by $\Delta-I$
 and  let $X^{\beta}={\rm Dom}((I-\Delta)^{\beta})$ be the fractional power spaces of $I-\Delta$ on $C_{\rm unif}^b(\R)$ ($\beta\in [0,1]$).

 The variation of constant formula and the fact that $\partial_{xx}V_{i}(\cdot;u)-\lambda_{i}V=-\mu_i u$ yield that
\begin{eqnarray}\label{variation -of-const}
& &U(\cdot,t;U_{\mu}^+, u)\nonumber\\
&=& \underbrace{T(t)U_{\mu}^{+}}_{I_{1}(t)}+ \underbrace{\int_{0}^{t}T(t-s)(((c_{\mu}+\partial_{x}(\chi_2V_2-\chi_1V_1)(\cdot;u))U(\cdot,s; U_{\mu}^+, u))_{x})(s)ds}_{I_{2}(t)}\nonumber \\
& +&\underbrace{\int_{0}^{t}T(t-s)(1+a+(\chi_2\mu_2-\chi_1\mu_1) u)U(\cdot,s;U_{\mu}^+, u)ds}_{I_{3}(t)}\nonumber\\
&-&(b+\chi_2\mu_2-\chi_1\mu_1)\underbrace{\int_{0}^{t}T(t-s)U^{2}(\cdot,s;U_{\mu}^+, u)ds}_{I_{4}(t)}.\nonumber\\
\end{eqnarray}
Let $0<\beta<\frac{1}{2}$ be fixed. There is a positive constant $C_{\beta}$,  (see \cite{Dan Henry}), such that
\begin{equation*}
\|I_{1}(t)\|_{X^{\beta}}\leq \tilde{C}_0C_\beta t^{-\beta}e^{-t},
\end{equation*}
\begin{equation*}\|I_{2}(t)\|_{X^{\beta}}\leq  C_{\beta}\tilde{C}_0(c_{\mu}+K\tilde{C}_0)\Gamma(\frac{1}{2}-\beta),
\end{equation*}
\smallskip
$$
\|I_{3}(t)\|_{X^{\beta}}
 \leq  C_{\beta}\tilde{C}_{0}(a+1+|\chi_2\mu_2-\chi_1\mu_1|\tilde{C}_{0})\Gamma(1-\beta),
 $$
 and
 $$ \ \ \|I_{4}(t)\|_{X^{\beta}}\leq C_{\beta}\tilde{C}_{0}^{2}\Gamma(1-\beta).
$$
Note that we have used Lemma \ref{Mainlem3}, mainly  the fact that $|\partial_{x}(\chi_2V_{2}-\chi_1V_1)(\cdot;u)|\leq K\tilde{C}_{0}$, to obtain the uniform upper bound estimates for $\|I_{2}(t)\|_{X^{\beta}}$. Therefore, for every $T>0$ we have that
\begin{equation}\label{Eq_Convergence01}
\sup_{t\geq T}\|U(\cdot,t;U_{\mu}^+,u)\|_{X^{\beta}}\leq M_{T}<\infty,
\end{equation}
where
\begin{equation}\label{Eq_Conv02}
M_{T,\mu}= C_{\beta}\tilde{C}_{0}\Big[\frac{T^{-\beta}}{e^{T}}+  (c_{\mu}+K\tilde{C}_{0})\Gamma(\frac{1}{2}-\beta)+  (a+1+|\chi_2\mu_2+\chi_1\mu_1|+\tilde{C}_{0}\Gamma(1-\beta)\Big].
\end{equation}
Hence, it follows  from \eqref{Eq_Convergence01} that
\begin{equation}\label{Eqq000}
\sup_{n\geq 1, t\geq 0}\|U_{n}(\cdot,t)\|_{X^{\beta}}\leq M_{t_{1}}<\infty.
\end{equation}
Next, for every $t,h\geq 0$ and $n\geq 1$, we have that
\begin{equation}\label{Eqq00}
\|I_{1}(t+h+t_{n})-I_{1}(t+t_{n})\|_{X^{\beta}}\leq C_{\beta}h^{\beta}(t+t_{n})^{-\beta}e^{-(t+t_n)}\|U_{\mu}^{+}\|_{\infty}\leq C_{\beta}h^{\beta}t_{1}^{-\beta}e^{-t_1}\|U_{\mu}^{+}\|_{\infty},
\end{equation}
\begin{align}\label{Eqq02}
\|I_{2}(t+t_n+h)-I_{2}(t+t_n)\|_{X^{\beta}}
&\leq C_{\beta}\tilde{C}_0(c_\mu+K\tilde{C}_0)\Big[h^{\beta}\Gamma(\frac{1}{2	}-\beta)+\frac{h^{\frac{1}{2}-\beta}}{\frac{1}{2}-\beta} \Big] ,
\end{align}
\begin{align}\label{Eqq01}
\|I_{3}(t+h+t_n)-I_{3}(t+t_n)\|_{X^{\beta}}
&\leq C_{\beta}\tilde{C}_{0}(a+1+|\chi_2\mu_2-\chi_1\mu_1|\tilde{C}_{0})\Gamma(1-\beta)\Big[h^{\beta}\Gamma(1-\beta)+\frac{h^{1-\beta}}{1-\beta} \Big] ,
\end{align}
and
\begin{eqnarray}\label{Eqq03}
\|I_{4}(t+t_n+h)-I_{4}(t+t_n)\|_{X^{\beta}}\leq C_{\beta}\tilde{C}_{0}^2\Big[h^{\beta}\Gamma(1-\beta)+\frac{h^{1-\beta}}{1-\beta} \Big].
\end{eqnarray}
It follows from inequalities \eqref{Eqq000}, \eqref{Eqq00}, \eqref{Eqq01}, \eqref{Eqq02} and \eqref{Eqq03}, the functions $U_{n} : [0, \infty)\to X^{\beta}$ are uniformly bounded and equicontinuous.
Since $X^{\beta}$ is continuously imbedded in $C^{\nu}(\R)$ for every $0\leq \nu<2\beta$ (See \cite{Dan Henry}),
% then
%$$
%\sup_{n}\|U_{n}(\cdot,0)\|_{C^{\nu}_{\rm unif}}<\infty
%$$
%for every $0\leq \nu<{ 1}$.
therefore, the Arzela-Ascoli Theorem and Theorem 3.15 in   \cite{Friedman}, imply that there is a function $\tilde{U}(\cdot,\cdot;u)\in C^{2,1}(\R\times(0,\infty))$ and a subsequence $\{U_{n'}\}_{n\geq 1}$ of $\{U_{n}\}_{n\geq 1}$ such that $U_{n'}\to \tilde{U}$ in $C^{2,1}_{loc}(\R\times(0, \infty))$ as $n\to \infty$ and $\tilde{U}(\cdot,\cdot;u)$ solves the PDE
$$
\begin{cases}
\partial_{t}\tilde{U}=\partial_{xx}\tilde{U}+(c_{\mu}+\partial_{x}(\chi_2 V_2-\chi_1V_1)(\cdot;u)\partial_{x}\tilde{U}+(a+(\chi_2\lambda_2V_2-\chi_1\lambda_1V_1)(\cdot;u))\tilde{U}\\
\quad \qquad+(c\tau(\chi_1V_1-\chi_2V_2)(\cdot,u)-(b+\chi_2\mu_2-\chi_1\mu_1)\tilde{U})\tilde{U}, \ t>0\\
\tilde{U}(x,0)=\lim_{n\to \infty}U(x,t_{n'}; U_{\mu}^+, u).
\end{cases}
$$
But $U(x;u)=\lim_{t\to \infty}U(x,t; U_{\mu}^+, u)$ and $t_{n'}\to \infty$ as $n\to \infty$, hence $\tilde{U}(x,t;u)=U(x;u)$ for every $x\in \R,\ t\geq 0$. Hence $U(\cdot;u)$ solves \eqref{Eq_MainLem02}.
\end{proof}

\begin{lem}
\label{aux-lm}  Assume (H).
  Then, for any given $u\in\mathcal{E}_{\tau,\mu}(\tilde{C}_0)$,
\eqref{Eq_MainLem02} has a unique bounded non-negative solution satisfying that
\begin{equation}
\label{aux-eq1}
\liminf_{x\to -\infty}U(x)>0\quad {\rm and}\quad \lim_{x\to\infty}\frac{U(x)}{e^{-\mu x}}=1.
\end{equation}
\end{lem}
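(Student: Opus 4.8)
The plan is to take $U(\cdot;u)$ from \eqref{U-eq} as the solution whose existence is claimed, and then to prove it is the only bounded non-negative solution of \eqref{Eq_MainLem02} satisfying \eqref{aux-eq1}. Existence is essentially already in hand: Lemma~\ref{MainLem02} shows $U(\cdot;u)$ solves \eqref{Eq_MainLem02}, and Remark~\ref{Remark-lower-bound-for -solution} gives $U^-_{\mu,\delta}(\cdot)\le U(\cdot;u)\le U^+_\mu(\cdot)$ for every $0<\delta\ll1$. Since $U^-_{\mu,\delta}>0$ on all of $\R$ and $U^+_\mu\le\tilde C_0$, the function $U(\cdot;u)$ is bounded and strictly positive; the identity $U^-_{\mu,\delta}(x)=U^-_\mu(x_\delta)>0$ for $x\le x_\delta$ gives $\liminf_{x\to-\infty}U(x;u)\ge U^-_\mu(x_\delta)>0$; and for $x$ large the bound $e^{-\mu x}-de^{-\tilde\mu x}\le U(x;u)\le e^{-\mu x}$, divided by $e^{-\mu x}$, forces $\lim_{x\to\infty}U(x;u)e^{\mu x}=1$ because $\tilde\mu>\mu$. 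Hence $U(\cdot;u)$ meets the requirements in \eqref{aux-eq1}.

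For uniqueness it is convenient to record that \eqref{Eq_MainLem02} reads $\mathcal LW=0$ with $\mathcal L$ as in \eqref{mathcal L}, whose nonlinearity, with $V_i(\cdot;u)$ held fixed, is $g(x,W)=q(x)W-rW^2$ where $r:=b+\chi_2\mu_2-\chi_1\mu_1>0$ and $q(x):=a+(\chi_2\lambda_2V_2-\chi_1\lambda_1V_1)(x;u)+c_\mu\tau(\chi_1V_1-\chi_2V_2)_x(x;u)$ is bounded by \eqref{unif-bound} and Lemma~\ref{Mainlem3}. Let $\tilde U$ be any bounded non-negative solution of \eqref{Eq_MainLem02} satisfying \eqref{aux-eq1}. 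It solves the linear equation $\tilde U_{xx}+p(x)\tilde U_x+(q(x)-r\tilde U)\tilde U=0$ with bounded coefficients, where $p(x):=c_\mu+(\chi_2V_2-\chi_1V_1)_x(x;u)$, and $\tilde U\not\equiv0$ because $\liminf_{x\to-\infty}\tilde U>0$; hence the strong maximum principle gives $\tilde U>0$ everywhere. Uniqueness will then follow from the sweeping statement: if $W_1,W_2$ are bounded positive solutions of \eqref{Eq_MainLem02} satisfying \eqref{aux-eq1}, then $W_1\equiv W_2$. Indeed, applying this to $(W_1,W_2)=(\tilde U,U(\cdot;u))$ and to $(U(\cdot;u),\tilde U)$ gives $\tilde U=U(\cdot;u)$.

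To prove the sweeping statement, put $\kappa^*:=\sup_{x\in\R}W_1(x)/W_2(x)$. This supremum is finite: the ratio tends to $1$ as $x\to\infty$ since $W_i/e^{-\mu x}\to1$, it stays bounded as $x\to-\infty$ since $W_1$ is bounded while $\liminf_{x\to-\infty}W_2>0$, and it is bounded on compact sets since $W_2>0$; also $\kappa^*\ge1$. Suppose $\kappa^*>1$. Using the concavity identity $g(x,\kappa^*W_2)=\kappa^*g(x,W_2)-r\kappa^*(\kappa^*-1)W_2^2$ together with $\mathcal LW_1=\mathcal LW_2=0$, the function $z:=\kappa^*W_2-W_1\ge0$ satisfies
\begin{equation*}
z_{xx}+p(x)z_x+\big(q(x)-r(\kappa^*W_2+W_1)\big)z=-r\kappa^*(\kappa^*-1)W_2^2<0 .
\end{equation*}
Thus $z$ can have no interior zero: at such a point $x_0$ we would have $z(x_0)=z_x(x_0)=0$ and $z_{xx}(x_0)\ge0$, contradicting the strict negativity of the right-hand side at $x_0$ (recall $W_2(x_0)>0$). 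So $z>0$ on $\R$. Pick $x_n$ with $W_1(x_n)/W_2(x_n)\to\kappa^*$, so $z(x_n)=W_2(x_n)(\kappa^*-W_1(x_n)/W_2(x_n))\to0$ since $W_2$ is bounded. The sequence $(x_n)$ cannot have a bounded subsequence (else $z$ would vanish at a limit point) and cannot go to $+\infty$ (there the ratio tends to $1<\kappa^*$), so $x_n\to-\infty$. Translating by $x_n$: the coefficients $p(\cdot+x_n)$ and $q(\cdot+x_n)$ are bounded in $C^\alpha_{\rm loc}$ (by \eqref{Estimates on Inverse of V}, \eqref{unif-bound} and Lemma~\ref{Mainlem3}), and the profiles $W_i(\cdot+x_n)$ are bounded in $C^{2,\alpha}_{\rm loc}$ by interior elliptic estimates; along a subsequence, $p(\cdot+x_n)\to p^\infty$, $q(\cdot+x_n)\to q^\infty$ locally uniformly and $W_i(\cdot+x_n)\to W_i^\infty$ in $C^2_{\rm loc}$, so $z^\infty:=\kappa^*W_2^\infty-W_1^\infty\ge0$, $z^\infty(0)=0$, and
\begin{equation*}
z^\infty_{xx}+p^\infty z^\infty_x+\big(q^\infty-r(\kappa^*W_2^\infty+W_1^\infty)\big)z^\infty=-r\kappa^*(\kappa^*-1)(W_2^\infty)^2 .
\end{equation*}
Because $x_n\to-\infty$ and $\liminf_{x\to-\infty}W_2>0$, we have $W_2^\infty(0)=\lim_n W_2(x_n)>0$, so the right-hand side is strictly negative at $0$; but $z^\infty(0)=0$ is an interior minimum of $z^\infty\ge0$, giving $z^\infty_x(0)=0$ and $z^\infty_{xx}(0)\ge0$, hence a non-negative left-hand side at $0$ --- a contradiction. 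Therefore $\kappa^*=1$, i.e.\ $W_1\le W_2$; by symmetry $W_2\le W_1$, so $W_1\equiv W_2$, which completes the proof.

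The step I expect to be the main obstacle is the analysis at $x=-\infty$ in the sweeping statement. Because $u\in\mathcal E_{\tau,\mu}(\tilde C_0)$ is arbitrary and need not converge as $x\to-\infty$, neither the coefficients of \eqref{Eq_MainLem02} nor its solutions have prescribed limits there, so one cannot argue through a limiting autonomous equation; instead the passage to the translated limit must be justified carefully, using the uniform estimates on $V_i(\cdot;u)$ and its first and second derivatives from \eqref{Estimates on Inverse of V}, the bounds \eqref{unif-bound} and Lemma~\ref{Mainlem3}, and interior Schauder estimates. It is precisely the hypothesis $\liminf_{x\to-\infty}\tilde U>0$ that keeps the limiting profile $W_2^\infty$ from degenerating at the origin, which is what makes the final contradiction go through.
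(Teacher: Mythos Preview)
Your proof is correct and complete: existence is correctly read off from Lemma~\ref{MainLem02} and Remark~\ref{Remark-lower-bound-for -solution}, and your sliding argument for uniqueness (bounding $\kappa^*=\sup W_1/W_2$, deriving the strict differential inequality for $z=\kappa^*W_2-W_1$, and passing to a translated limit at $-\infty$) is the standard technique for KPP-type semilinear equations with strictly concave nonlinearity. The paper itself gives no details here and simply invokes \cite[Lemma~3.6]{SaSh2}, whose argument is of the same nature, so your approach is in line with the intended one; the one minor point is that your appeal to the ``strong maximum principle'' to get $\tilde U>0$ is really the ODE uniqueness statement (if $\tilde U(x_0)=\tilde U'(x_0)=0$ then $\tilde U\equiv 0$), which you might state explicitly.
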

The proof of Lemma \ref{aux-lm} follows from  \cite[Lemma 3.6]{SaSh2}.

We now prove Theorem \ref{existence-tv-thm}.

\begin{proof}[Proof of Theorem  \ref{existence-tv-thm}]
Following the proof of Theorem 3.1 in \cite{SaSh2},
 let us consider the normed linear space  $\mathcal{E}=C^{b}_{\rm unif}(\R)$ endowed with the norm
$$\|u\|_{\ast}=\sum_{n=1}^{\infty}\frac{1}{2^n}\|u\|_{L^{\infty}([-n,\ n])}. $$
For every $u\in\mathcal{E}_{\tau,\mu}(\tilde{C}_{0})$ we have that $\|u\|_{\ast}\leq \tilde{C}_{0}. $ Hence $\mathcal{E}_{\tau,\mu}(\tilde{C}_{0})$ is a bounded convex subset of $\mathcal{E}$. Furthermore, since the convergence in $\mathcal{E}$ implies the pointwise convergence, then $\mathcal{E}_{\tau,\mu}(\tilde{C}_{0})$ is a closed, bounded, and convex subset of $\mathcal{E}$. Furthermore, a sequence of functions in $\mathcal{E}_{\tau,\mu}(\tilde{C}_{0})$ converges with respect to norm $\|\cdot\|_{\ast}$ if and only if it  converges locally uniformly on $\R$.

We prove that the mapping $\mathcal{E}_{\tau,\mu}(\tilde{C}_{0})\ni u\mapsto U(\cdot;u)\in\mathcal{E}_{\tau,\mu}(\tilde{C}_{0})$ has a fixed point. We divide the proof in three steps.

\smallskip

\noindent {\bf Step 1.} In this step, we prove that the mapping $\mathcal{E}_{\tau,\mu}(\tilde{C}_{0})\ni u\mapsto U(\cdot;u)\in \mathcal{E}_{\tau,\mu}(\tilde{C}_{0})$ is compact.

 Let $\{u_{n}\}_{n\geq 1}$ be a sequence of elements of $\mathcal{E}_{\tau,\mu}(\tilde{C}_{0})$. Since $U(\cdot;u_{n})\in \mathcal{E}_{\tau,\mu}(\tilde{C}_{0})$ for every $n\geq 1$ then $\{U(\cdot;u_{n})\}_{n\geq 1}$ is clearly uniformly bounded by $\tilde{C}_{0}$. Using inequality \eqref{Eq_Convergence01}, we have that
\begin{equation*}
\sup_{t\geq 1}\|U(\cdot,t; U_{\mu}^+,u_{n})\|_{X^{\beta}}\leq M_{1}
\end{equation*}
for all $n\geq 1$ where $M_{1}$ is given by \eqref{Eq_Conv02}. Therefore there is $0<\nu\ll 1$ such that
\begin{equation}\label{Proof-MainTh3- Eq1}
\sup_{t\geq 1}\|U(\cdot,t; U_{\mu}^+,u_{n})\|_{C^{\nu}_{\rm unif}(\R)}\leq \tilde{M_{1}}
\end{equation} for every $n\geq 1$ where $\tilde{M_{1}}$ is a constant depending only on $M_{1}$. Since for every $n\geq 1$ and every $x\in\R$, we have that $U(x,t; U_{\mu}^+,u_{n})\to U(x;u_{n})$ as $t\to \infty,$ then it follows from \eqref{Proof-MainTh3- Eq1} that
\begin{equation}\label{Prof-MainTh3- Eq2}
\|U(\cdot;u_{n})\|_{C^{\nu}_{\rm unif}}\leq \tilde{M_{1}}
\end{equation} for every $n\geq 1$. Which implies that the sequence $\{U(\cdot;u_{n})\}_{n\geq 1}$ is equicontinuous. The Arzela-Ascoli's Theorem implies that there is a subsequence $\{U(\cdot;u_{n'})\}_{n\geq 1}$ of the sequence $\{U(\cdot;u_{n})\}_{n\geq 1}$ and a function $U\in C(\R)$ such that $\{U(\cdot;u_{n'})\}_{n\geq 1}$ converges to $U$ locally uniformly on $\R$. Furthermore, the function $U$ satisfies inequality \eqref{Prof-MainTh3- Eq2}. Combining this with the fact  $U_{\mu}^{-}(x)\leq U(x;u_{n'})\leq U_{\mu}^{+}(x)$ for every $x\in\R$ and $n\geq 1$, by letting $n$ goes to infinity, we obtain that  $U\in \mathcal{E}_{\tau,\mu}(\tilde{C}_{0})$. Hence  the mapping $\mathcal{E}_{\tau,\mu}(\tilde{C}_{0})\ni u\mapsto U(\cdot;u)\in \mathcal{E}_{\tau,\mu}(\tilde{C}_{0})$ is compact.

\smallskip

\noindent{\bf Step 2.} In this step, we prove that the mapping $\mathcal{E}_{\tau,\mu}(\tilde{C}_{0})\ni u\mapsto U(\cdot;u)\in \mathcal{E}_{\tau,\mu}(\tilde{C}_{0})$ is continuous.
 This follows from the arguments used in the proof of Step 2, Theorem 3.1, \cite{SaSh2}

Now by Schauder's Fixed Point Theorem, there is $U\in\mathcal{E}_{\tau,\mu}(\tilde{C}_{0})$ such that $U(\cdot;U)=U(\cdot)$. Then
$(U(x),V(x;U))$ is a stationary solution of \eqref{Main-eq2} with $c=c_\mu$. It is clear that
$$
\lim_{x\to\infty}\frac{U(x)}{e^{- \mu x}}=1.
$$

\noindent {\bf Step 3.} We claim that
$$
\lim_{x\to -\infty}U(x)=\frac{a}{b}.
$$
For otherwise, we may assume that there is $x_n\to -\infty$ such that $U(x_n)\to \lambda\not =\frac{a}{b}$ as $n\to\infty$. Define $U_{n}(x)=U(x+x_{n})$ for every $x\in\R$ and $n\geq 1$. By the arguments of Lemma \ref{MainLem02},  there is a subsequence $\{U_{n'}\}_{n\geq 1}$ of $\{U_{n}\}_{n\geq 1}$ and a function  $U^*\in C_{\rm unif}^b(\R)$ such that $\|U_{n'}-U^*\|_{\ast}\to 0$ as $n\to \infty$. Moreover, $(U^*,V_1(\cdot;U^*),V_2(\cdot;U^*))$ is also a stationary solution of \eqref{Main-eq2} with $c=c_\mu$.

\smallskip

\noindent {\bf Claim 1.} $\inf_{x\in\R}U^{*}(x)>0$.

 Indeed, let $0< \delta\ll 1$ be fixed. For  every $x\in\R$, there $N_{x}\gg 1$ such that $x+x_{n'}< x_{\delta}$ for all $n\geq N_{x}$. Hence, It follows from Remark \ref{Remark-lower-bound-for -solution} that  $$0< U_{\mu,\delta}^{-}(x_{\delta})\leq U(x+x_{n'}) \ \forall\ n\geq N_{x}.$$
Letting $n\to\infty$ in the last inequality, we obtain that $U_{\mu,\delta}^{-}(x_{\delta})\leq U^{*}(x)$ for every $x\in\R$. The claim thus follows.

\noindent {\bf Claim 2.}  $U^*(x)\equiv\frac{a}{b}$.

 Note also  that the function $(\tilde U(x,t),\tilde V_1(x,t),\tilde V_2(x,t))=(U^{*}(x-c_{\mu}t),V_1(x-c_{\mu}t,U^{*}),V_2(x-c_{\mu}t,U^{*}))$ solves \eqref{Main-eq2}. Then by Theorem B and Claim 1,
 $$
 \lim_{t\to\infty}\sup_{x\in\R}|U^*(x-c_\mu t)-\frac{a}{b}|=0.
 $$
 This implies that $U^*(x)=\frac{a}{b}$ for any $x\in\R$ and the claim thus follows.

 By Claim 2, we have $\lim_{n\to \infty} U(x_n)=U^*(0)=\frac{a}{b}$, which contracts to $\lim_{n\to \infty} U(x_n)=U^*(0)=\lambda\not =\frac{a}{b}$.
\end{proof}
%As a direct consequence of Theorem \ref{existence-tv-thm} we present the proof of Theorem A .

 Now, we are ready to prove Theorem C.

 \begin{proof}[Proof of Theorem C (i)]
Let us set
\begin{align*}
m_{\tau}(\chi_1,\mu_1,\lambda_1,\chi_2,\mu_2,\lambda_2):=\inf&\Big\{ \max\{2(\overline{M}+\tau c_{\mu}K-\frac{\chi_2\mu_2-\chi_1\mu_1}{2})\ ,\ (\tau c_{\mu}+\mu)K_{\tau,\mu}+\overline{M}_{\tau,\mu}, \}\,  : \nonumber\\
&\  \ 0<\mu<\min\{\sqrt{a}, \sqrt{\frac{\lambda_1+\tau a}{(1-\tau)_{+}}},\sqrt{\frac{\lambda_2+\tau a}{(1-\tau)_{+}}}\} \Big\}.
\end{align*}
Note that under the assumption of Theorem A, we have that
$$
b+\chi_2\mu_2-\chi_1\mu_1>m_{\tau}(\chi_1,\mu_1,\lambda_1,\chi_2,\mu_2,\lambda_2).
$$
Hence the open set
\begin{align*}
O_{\tau}:=\Big\{& \mu\in \Big(0\ ,\ \min\{\sqrt{a}, \sqrt{\frac{\lambda_1+\tau a}{(1-\tau)_{+}}},\sqrt{\frac{\lambda_2+\tau a}{(1-\tau)_{+}}}\}\Big)\ {\rm such\ that} \cr
&  \  \max\{2(\overline{M}+\tau c_{\mu}K+\frac{\chi_2\mu_2-\chi_1\mu_1}{2})\ ,\ (\tau c_{\mu}+\mu)K_{\tau,\mu}+\overline{M}_{\tau,\mu}, \}< b+\chi_2\mu_2-\chi_1\mu_1\Big\}
\end{align*}
is nonempty.
Let $(\mu^{**}_{\tau}, \mu^{*}_{\tau})$ denotes the right maximal open connected component of the open set $O_{\tau}$. That is $(\mu^{**}_{\tau}, \mu^{*}_{\tau})$ is the open connected component of $O_{\tau}$ with maximal length and, closed to $\sqrt{a}$.  Since the open interval $ \Big(0\ ,\ \min\{\sqrt{a}, \sqrt{\frac{\lambda_1+\tau a}{(1-\tau)_{+}}},\sqrt{\frac{\lambda_2+\tau a}{(1-\tau)_{+}}}\}\Big)$ has finite length, then $(\mu^{**}_{\tau}, \mu^{*}_{\tau}) $ is well defined.  Next, we set
\begin{align*}
c^{*}(\tau,\chi_1,\mu_1,\lambda_1,\chi_2,\mu_2,\lambda_2):=c_{\mu_{\tau}^{*}}, \quad {\rm and}\quad c^{**}(\tau,\chi_1,\mu_1,\lambda_1,\chi_2,\mu_2,\lambda_2):=c_{\mu^{**}_{\tau}}
\end{align*}where $c_{\mu}=\mu+\frac{a}{\mu}$.   Let $ c^{*}(\tau,\chi_1,\mu_1,\lambda_1$, $\chi_2,\mu_2,\lambda_2)< c<c^{**}(\tau,\chi_1,\mu_1,\lambda_1$, $\chi_2,\mu_2,\lambda_2)$ be given and let $\mu\in(\mu^{**}, \mu^*)$ be the unique solution of the equation $c_{\mu}=c$. It follows from Theorem \ref{existence-tv-thm}, that  \eqref{Main-eq01} has a traveling wave solution $(U(x,t),V_{1}(x,t),V_2(x,t))=(U(x-ct),V_1(x-ct),V_2(x-ct))$ with speed c connecting $(\frac{a}{b},\frac{a\mu_1}{b\lambda_1},\frac{a\mu_2}{b\lambda_2})$ and $(0,0,0)$. Moreover $\lim_{z\to\infty}\frac{U(z)}{e^{-\mu z}}=1$.

Observe that for every $\lambda_i,\mu_i>0$, $i=1,2$ and $\tau>0$ we have
$$
\lim_{(\chi_1,\chi_2)\to (0^+,0^+))}m_{\tau}(\chi_1,\mu_1,\lambda_1,\chi_2,\mu_2,\lambda_2)=0.
$$ 
 Thus we have that 
 $$  \lim_{(\chi_1,\chi_2)\to (0^+,0^+))}\mu^{*}_{\tau}=\min\{\sqrt{a}, \sqrt{\frac{\lambda_1+\tau a}{(1-\tau)_{+}}},\sqrt{\frac{\lambda_2+\tau a}{(1-\tau)_{+}}}\} 
 $$
and
$$\lim_{(\chi_1,\chi_2)\to (0^+,0^+))}\mu^{**}_{\tau}=0. $$ Thus
$$
\lim_{(\chi_1,\chi_2)\to (0^+,0^+))}c^{**}(\tau,\chi_1,\mu_1,\lambda_1,\chi_2,\mu_2,\lambda_2)=\infty,$$
and
$$\lim_{(\chi_1,\chi_2)\to (0^+,0^+))}c^{*}(\tau,\chi_1,\mu_1,\lambda_1,\chi_2,\mu_2,\lambda_2)=c_{\tilde{\mu}^*},
$$
where $\tilde{\mu}^*={\min\{\sqrt{a}, \sqrt{\frac{\lambda_1+\tau a}{(1-\tau)_{+}}},\sqrt{\frac{\lambda_2+\tau a}{(1-\tau)_{+}}}\}}$.
\end{proof}

\medskip

Next, we present the proof of Theorem C (ii).

\medskip

\begin{proof}[Proof of Theorem C (ii)]
 For every $c_n>c^*$ with $c_n\to c^*=c^{*}(\tau,\chi_1,\mu_1,\lambda_1,\chi_2,\mu_2,\lambda_2)$, let $(U^{c_n}(x),V_1^{c_n}(x),V_2^{c_n}(x))$ be the traveling wave solution solution of \eqref{Main-eq1} connecting  $(\frac{a}{b},\frac{a\mu_1}{b\lambda_1},\frac{a\mu_2}{b\lambda_2})$  and $(0,0,0)$ with speed $c_n$ given by Theorem C (i). For each $n\geq 1$, note that the set $\{x\in\R\ :\ U^{c_n}(x_n)=\frac{a}{2b}\}$ is compact  and nonempty, so there is $x_n\in\R$ such that
 $$x_n=\min\{x\in\R\ :\ U^{c_n}(x_n)=\frac{a}{2b}\}.$$  Since $\sup_n\|U^{c_n}\|_{\infty}<\infty$, hence by estimates for parabolic equations, without loss of generality, we may suppose that that $U^{c_n}(x+x_n)\to U^*(x)$ as $n\to\infty$ locally uniformly. Furthermore, taking $V_i^*(x)=V_i(x,U^*)$, $i=1,2$, it holds that $(U^*,V_1^*,V_2^*)$ solves
 \begin{equation}\label{kk-2}
 \begin{cases}
0=U^*_{xx}+((c^*+(\chi_2V_2^*-\chi_1V_1^*)_x)U^*)_x+U^*(a-bU^*),\quad x\in\R\cr
0=\partial_{xx}V_1^*+\tau\partial_xV_1^*-\lambda_1 V^*+\mu_1 U^*,\quad x\in\R,\cr
0=\partial_{xx}V_2^*+\tau\partial_xV_2^*-\lambda_2 V^*+\mu_2 U^*,\quad x\in\R,
\end{cases}
 \end{equation}
 $U^*(0)=\frac{a}{2b}$ and $U^*(x)\geq \frac{a}{2b}$ for every $x\leq 0$. Next we claim that
 \begin{equation}\label{kk-3}
\overline{U}^*(\infty):= \limsup_{x\to\infty}U^*(x)=0.
 \end{equation}
 Suppose on the contrary that \eqref{kk-3} does not hold. Then there is a sequence $\{z_n\}_{n\geq1}$ such that $z_n<z_{n+1}$ for every $n$, $z_n\to\infty$, $z_1=0$, and
 \begin{equation*}
\inf_{n\ge 1} U^*(z_n)>0.
 \end{equation*}
 For every $n\geq 1$ let $\{y_n\}_{n\geq1}$ be the sequence defined by
 $$
U^*(y_n)=\min\{U^*(x)\ :\ z_n\leq x\leq z_{n+1}\}.
 $$
 Observe  that
 $$
\lim_{n\to\infty}U^*(y_n)=\inf_{x\in\R}U^*(x).
 $$
 Since $(U^*(x-c^*t),V_1^*(x-c^*t),V_2^*(x-c^*t))$ is a positive entire solution of \eqref{Main-eq1} with
 $$
\liminf_{x\to-\infty}U^*(x) \geq \frac{a}{2b}\quad \text{and} \quad U^*(0)\neq \frac{a}{b},
 $$
 then by the stability of the constant equilibrium $(\frac{a}{b},\frac{a\mu_1}{b\lambda_1},\frac{a\mu_2}{b\lambda_2})$ provided by Theorem B, we obtain that
 $$
0=\inf_{x\in\R}U^*(x)=\lim_{n\to\infty}U^*(y_n).
 $$
 Therefore, without loss of generality, we may suppose that $y_n\in(x_n,x_{n+1})$ for every $n\geq 1$ with
 $$
\frac{d^2}{dx^2}U^*(y_n)\geq 0 \quad \text{and}\quad  \frac{d}{dx}U^*(y_n)=0, \quad \forall\ n\geq1.
 $$
 Note also that
 $$
\lim_{n\to\infty}V_i^*(y_n)=0\quad \text{and}\quad  \lim_{n\to\infty}\partial_xV_i^*(y_n)=0 \quad i=1,2.
 $$
 Thus for $n$ large enough, we have that
 $$
 U_{xx}^*(y_n)-((c^*+(\chi_2V_2-\chi_1V_1^*)_x)U^*)_x(y_n)+U^*(y_n)(a-bU^*(y_n))>0,
 $$
 which contradicts to \eqref{kk-2}. Therefore, \eqref{kk-3} holds.

 It follows again from
 $$
\liminf_{x\to-\infty}U^*(x) \geq \frac{a}{2b} $$  and the stability of the constant equilibrium $(\frac{a}{b},\frac{a\mu_1}{b\lambda_1},\frac{a\mu_2}{b\lambda_2})$ provided by Theorem B that $$ \liminf_{x\to-\infty}U^*(x)=\frac{a}{b}.$$
 Therefore $(u^{c^*}(t,x),v^{c^*}(t,x))=(U^*(x-c^*t),V^*(x-c^*t))$ is a traveling wave solution of \eqref{Main-eq1} with speed $c^*$ connecting $(\frac{a}{b},\frac{a\mu_1}{b\lambda_1},\frac{a\mu_2}{b\lambda_2})$  and $(0,0,0)$.

\end{proof}

For clarity sake in the proof of Theorem C (iii), we first present the following lemma. 

\medskip

\begin{lem}\label{decreasing at infinity}
Let $(u,v_1,v_2)(x,t)=(U(x-ct),V_1(x-ct),V_2(x-ct))$ be traveling wave solution of \eqref{Main-eq1} connecting $(\frac{a}{b},\frac{a\mu_1}{b\lambda_1},\frac{a\mu_2}{b\lambda_2})$ and $(0,0,0)$ with speed $c$.  There is $X_0\gg 1$ such that 
\begin{equation}\label{zz-eq1}
U \ :\  [X_0,\infty)\to (0,\infty) \quad \text{is monotone decreasing.}
\end{equation} 

\end{lem}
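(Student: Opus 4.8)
The plan is to combine the elliptic profile equation satisfied by $U$ with the strong maximum principle and an elementary analysis of the critical points of $U$ near $+\infty$. First I would record the equations: since $(U,V_1,V_2)$ is a traveling wave of \eqref{Main-eq1} connecting the two states, $(U,V_1,V_2)$ is a bounded nonnegative stationary solution of \eqref{Main-eq2} for the given $c$, and writing $g:=\chi_2V_2-\chi_1V_1$ and expanding $\partial_x(U\,g')=U'g'+Ug''$, the profile $U$ solves
\begin{equation*}
U''+(c+g')\,U'+\bigl(a+g''-bU\bigr)U=0,\qquad z\in\R,
\end{equation*}
while each $V_i$ is given by $V_i(z)=\mu_i\int_0^\infty\!\!\int_\R\frac{e^{-\lambda_i s}}{\sqrt{4\pi s}}e^{-|z-y|^2/(4s)}U(y+\tau c s)\,dy\,ds$, so that $V_i''=\lambda_iV_i-\tau cV_i'-\mu_iU$.

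Since $U\ge0$ and $U\not\equiv0$ (because $U(-\infty)=a/b$), the strong maximum principle applied to the linear, bounded-coefficient equation above (the zeroth-order coefficient has no definite sign, but one may first subtract off $(bU-g'')_+U\ge0$) gives $U>0$ on $\R$. Moreover $U(z)\to0$ as $z\to\infty$ by hypothesis; and from the convolution representation of $V_i$ and of $V_i'$ (the latter obtained after the substitution $y=z+2\sqrt s\,w$), together with dominated convergence using $U(z)\to0$ — or, alternatively, interior elliptic estimates for the $V_i$-equations — one gets $V_i(z)\to0$ and $V_i'(z)\to0$ as $z\to\infty$; hence $g''=\chi_2V_2''-\chi_1V_1''\to0$ as $z\to\infty$ as well.

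Next I would localize: since $a+g''(z)-bU(z)\to a>0$ as $z\to\infty$, fix $X_0\gg1$ with $a+g''(z)-bU(z)\ge a/2$ for all $z\ge X_0$. The claim is that $U$ has at most one critical point in $(X_0,\infty)$ and that any such point is a strict local maximum. Indeed, if $U'(z_\ast)=0$ with $z_\ast>X_0$, evaluating the profile equation at $z_\ast$ gives $U''(z_\ast)=-\bigl(a+g''(z_\ast)-bU(z_\ast)\bigr)U(z_\ast)\le-\tfrac a2U(z_\ast)<0$, so $z_\ast$ is a nondegenerate critical point, hence a strict local maximum (also, no solution of the profile equation is constant on a subinterval of $(X_0,\infty)$, since that would force $U\equiv0$ there). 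If $U$ had two critical points $w_1<w_2$ in $(X_0,\infty)$, then $\min_{[w_1,w_2]}U$ would be attained at some interior $z_\ast\in(w_1,w_2)$ — the endpoints, being strict local maxima, cannot realize that minimum — and $z_\ast$ would be a local minimum of $U$ with $U'(z_\ast)=0$, contradicting $U''(z_\ast)<0$. Finally, after enlarging $X_0$ (replacing it by the unique critical point in $(X_0,\infty)$ if one exists), $U'$ has a constant sign on $(X_0,\infty)$; it cannot be positive, for then $U$ would be increasing on $[X_0,\infty)$ while converging to $0$, contradicting $U(X_0)>0$. Hence $U'<0$ on $(X_0,\infty)$, which is \eqref{zz-eq1}.

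The only genuinely technical point — and the main obstacle — is the asymptotics $V_i(z),V_i'(z)\to0$ (equivalently $g''(z)\to0$) as $z\to\infty$, which is where one must invoke the convolution representation of $V_i$ (or interior parabolic/elliptic estimates for the $V_i$-equations) together with the dominated-convergence argument driven by $U(z)\to0$; everything else reduces to an elementary ODE and maximum-principle argument, and no hypothesis on $c$ relative to $2\sqrt a$ is needed.
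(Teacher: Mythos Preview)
Your proof is correct and uses essentially the same idea as the paper: at any critical point $z_*$ sufficiently far to the right, the profile equation forces $U''(z_*)<0$, so there can be no local minimum of $U$ near $+\infty$. The paper phrases this as a contradiction (assuming a sequence of local minima $x_n\to\infty$ and showing the equation is violated for large $n$), whereas you give a slightly more detailed direct argument (counting critical points and concluding eventual monotonicity), but the core mechanism is identical.
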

\begin{proof}Suppose by contradiction that \eqref{zz-eq1} does not hold. There exists a sequence $\{x_n\}_{n\geq 1}$ with $x_n\to\infty$ such that $x_n$ is a local minimum point of $U$. Hence,
$$ 
U_{xx}(x_n)\geq 0 \quad \text{and}\quad  U_{x}(x_n)=0,\quad \forall\ n\geq 1.
$$
Since $U(\infty)=0$, then it holds that 
$$ 
\lim_{n\to\infty}U(x_n)=\lim_{n\to\infty}V_i(x_n)=\lim_{n\to\infty}\partial_xV_i(x_n)=0.
$$
Therefore, there is $n\gg 1$ such that 
\begin{align*} 
0<&U_{xx}(x_n)+(c+(\chi_2V_2-\chi_2V_2)_x(x_n))U_{x}(x_n)\cr
&+(a+(\chi_2\lambda_2V_2-\chi_1\lambda_1V_1)(x_n)+c\tau(\chi_1V_1-\chi_2V_2)_x(x_n)-(b+\chi_2\mu_2-\chi_1\mu_1)U(x_n))U(x_n)
\end{align*}
which contradicts to  \eqref{Eq_MainLem02}. Thus, \eqref{zz-eq1} holds. 

\end{proof}

\medskip

Next, we present the proof of Theorem C(iii).

\medskip

 \begin{proof}[Proof of Theorem C (iii)] Let $(u,v_1,v_2)(x,t)=(U(x-ct),V_1(x-ct),V_2(x-ct))$ be traveling wave solution of \eqref{Main-eq1} connecting $(\frac{a}{b},\frac{a\mu_1}{b\lambda_1},\frac{a\mu_2}{b\lambda_2})$ and $(0,0,0)$ with speed $c$.  Let $X_0\gg 1$ be given by Lemma \ref{decreasing at infinity}. We shall show that $c\geq 2\sqrt{a}$. Suppose by contradiction that $c<2\sqrt{a}$. Choose $q\in(\max\{c,0\},2\sqrt{a})$ and $0<\varepsilon\ll 1$ satisfying
 \begin{equation*}
 q+\varepsilon<2\sqrt{a-\varepsilon}.
\end{equation*} 
Since $(U,V_1,V_2)(\infty)=(0,0,0)$, there is $X_{\varepsilon}> X_0$ such that 
 $$ 
(1+\tau |c|)|(\chi_2V_2+\chi_1V_1)_x|(x)+|(\chi_1\lambda_1V_1-\chi_2\lambda_2V_2)|(x) +|b-\chi_1\mu_1+\chi_2\mu_2|U(x)<\varepsilon,\quad \forall\ x>X_{\varepsilon}.  
 $$
 Hence, since $U'(x-ct)\leq 0$ for $x-ct\geq X_{\varepsilon}$, the function $u(x,t)=U(x-ct)$ satisfies
  \begin{align}\label{zz-eq2}
 u_t\geq  u_{xx} +(\chi_2v_2-\chi_1v_1)_xu_x +(a-\varepsilon )u
 \geq  u_{xx}+\varepsilon u_{x} +(a-\varepsilon )u, \quad\ x\ge ct+ X_{\varepsilon}.
 \end{align}
Observe that with $L:=\frac{2\pi}{\sqrt{4(a-\varepsilon)-(q+\varepsilon)^2}}$ the function 
$$ 
\underline{u}(x,t)=m_0e^{-\frac{q+\varepsilon}{2}(x-qt)}\sin(\frac{\pi}{L}(x-qt)), \quad qt\leq x\leq qt+ L, \ t\in\R,
$$
with $m_0=\min\{U(x-ct_0) \ :\ 0\leq x\leq L+qt_0 \}$ where $t_0:=\frac{X_{\varepsilon}}{q-c}$
satisfies $0\leq \underline{u}(t,x)\leq m_0$,
\begin{equation}\label{zz-eq3} 
\underline{u}_t= \underline{u}_{xx}+\varepsilon\underline{u}_x+(a-\varepsilon)\underline{u},
\end{equation}
\begin{equation}\label{zz-eq4} \underline{u}(qt,t)=0< u(qt,t)\quad \text{and}\quad  \underline{u}(qt+L,t)=0<u(qt+L,t), \quad \forall\ t\in\R,
\end{equation}
and 
\begin{equation}\label{zz-eq5} 
\underline{u}(x,t_0)\leq m_0\leq U(x-ct_0)=u(x,t_0),\quad \forall qt_0\leq x\leq qt_0+L. 
\end{equation}
Therefore, by comparison principle for parabolic equations, it follows from \eqref{zz-eq2}-\eqref{zz-eq5} that 
$$ 
\underline{u}(t,x)\leq u(t,x)=U(x-ct), \quad \forall qt\leq x\leq qt + L, \ t\geq t_0.
$$
In particular, for $x=qt+\frac{L}{2}$, $t\ge t_0$, we have from the last inequality that 
$$ 
0<m_0e^{-\frac{(q+\varepsilon)L}{4}}=\underline{u}(qt+\frac{L}{2},t)\leq u(qt+\frac{L}{2},t)= U((q-c)t+\frac{L}{2}), \quad  t\ge t_0. 
$$
Thus $q\leq c$, since $U(\infty)=0$.  Which contradicts to $q>c$. Therefore we must have $c\geq 2\sqrt{a}$. 
\end{proof}

\end{document}